\newcommand{\hp}{\mathbb{H}}
\newcommand{\sq}{{\scaleobj{0.7}{\square}}}
\newenvironment{abstracts}{%
  \ifx\maketitle\relax
    \ClassWarning{\@classname}{Abstract should precede
      \protect\maketitle\space in AMS document classes; reported}%
  \fi
  \global\setbox\abstractbox=\vtop \bgroup
    \normalfont\Small
    \list{}{\labelwidth\z@
      \leftmargin3pc \rightmargin\leftmargin
      \listparindent\normalparindent \itemindent\z@
      \parsep\z@ \@plus\p@
      
      \itemsep\medskipamount
    }%
}{%
  \endlist\egroup
  \ifx\@setabstract\relax \@setabstracta \fi
}
\newcommand{\abstractin}[1]{%
  \otherlanguage{#1}%
  \item[\hskip\labelsep\scshape\abstractname.]%
}
\title{Reaction-diffusion equations in the half-space}
\author{Henri Berestycki}
\address{HB: Centre d'analyse et de math\'{e}matique sociales, EHESS-CNRS, 54 Boulevard Raspail, 75006, Paris, France}
\address{\hspace{1.8em}Senior Visiting Fellow, HKUST Jockey Club Institute for Advanced Study, Hong Kong University of Science and Technology}
\email{\tt hb@ehess.fr}
\author{Cole Graham}
\address{CG: Department of Mathematics, Stanford University, 450 Jane Stanford Way, Building 380, Stanford, CA 94305, USA}
\email{\tt grahamca@stanford.edu}
\begin{document}

\begin{abstracts}
  \abstractin{english}
  We study reaction-diffusion equations of various types in the half-space.
  For bistable reactions with Dirichlet boundary conditions, we prove conditional uniqueness: there is a unique nonzero bounded steady state which exceeds the bistable threshold on large balls.
  Moreover, solutions starting from sufficiently large initial data converge to this steady state as $t \to \infty$.
  For compactly supported initial data, the asymptotic speed of this propagation agrees with the unique speed $c_*$ of the one-dimensional traveling wave.
  We furthermore construct a traveling wave in the half-plane of speed $c_*$.
  
  In parallel, we show analogous results for ignition reactions under both Dirichlet and Robin boundary conditions.
  Using our ignition construction, we obtain stronger results for monostable reactions with the same boundary conditions.
  For such reactions, we show in general that there is a unique nonzero bounded steady state.
  Furthermore, monostable reactions exhibit the hair-trigger effect: every solution with nontrivial initial data converges to this steady state as $t \to \infty$.
  Given compactly supported initial data, this disturbance propagates at a speed $c_*$ equal to the minimal speed of one-dimensional traveling waves.
  We also construct monostable traveling waves in the Dirichlet or Robin half-plane with any speed $c \geq c_*$.

  \bigskip

  {\abstractin{french}
  Nous \'etudions les \'equations de r\'eaction-diffusion de diff{\'e}rents types dans un demi-espace avec conditions au bord de type Dirichlet ou Robin.
  Pour les r{\'e}actions bistables avec conditions de Dirichlet, nous {\'e}tablissons l'unicit{\'e} de la solution stationnaire sup{\'e}rieure {\`a} un certain seuil sur des boules suffisamment  grande.
  Les solutions qui {\'e}manent de donn{\'e}es initiales suffisamment grandes convergent vers cette solution lorsque $t\to\infty$.
  Dans ce cas, la vitesse asymptotique de propagation pour des donn{\'e}es initiales {\`a} support compact est donn{\'e}e par l'unique vitesse $c_*$ des fronts plans.
  De plus, nous construisons un front progressif avec la vitesse $c_*$ dans le demi-plan avec condition de Dirichlet.
  
  En parall{\`e}le, nous montrons des r{\'e}sultats analogues concernant les r{\'e}actions de type ignition pour les conditions de Dirichlet ou de Robin.
  En utilisant cette derni{\`e}re construction, nous obtenons des r{\'e}sultats plus pr{\'e}cis dans le cas de r{\'e}actions monostables avec ces conditions aux limites.
  Nous montrons de fa\c{c}on g{\'e}n{\'e}rale qu'il existe  un unique {\'e}tat stationnaire et que  toutes les solutions avec donn{\'e}es initiales born{\'e}es et non identiquement nulles convergent vers cet {\'e}tat lorsque $t \to \infty$.
  Lorsque la donn\'ee initiale est \`a support compact, nous obtenons une vitesse asymptotique de propagation $c_*$ {\'e}gale \`a la vitesse minimale des fronts progressifs plans.
  De plus, nous construisons des ondes progressives  dans le demi-espace avec conditions de Dirichlet ou de Robin au bord pour toute vitesse $c \geq c_*$.}
\end{abstracts}



\maketitle

\thispagestyle{empty}

\clearpage
\setcounter{page}{1}

\section{Introduction}

We are interested in the long-time behavior of reaction-diffusion equations in the half-space.
We work in $d + 1$ spatial dimensions with $d \geq 1$.
We denote positions in $\R^{d + 1}$ by ${\tbf{x} = (\tbf{x}', y) \in \R^d \times \R}$, and define the upper half-space ${\hp \coloneqq \R^d \times \R_+}$.
We study solutions $u \colon [0,\infty) \times \bar \hp \to \R$ to the following reaction-diffusion equation with Dirichlet ($\varrho = 0$) or Robin ($\varrho > 0$) boundary conditions:
\begin{equation}
  \label{eq:main}
  \begin{cases}
    \partial_t u = \Delta u + f(u) & \textrm{in } \hp,\\
    \partial_y u = \varrho^{-1} u & \textrm{on } \partial \hp.
  \end{cases}
\end{equation}
When $\varrho = 0$, we interpret the boundary condition as $u = \varrho \partial_y u = 0$ on $\partial \hp$.
The boundary $\partial \hp$ makes \eqref{eq:main} anisotropic, in contrast to the equation in the whole space.
We view this asymmetry as a form of inhomogeneity.

The nonlinearity $f$ in \eqref{eq:main} is known as the ``reaction.''
In this work, we consider three classical reaction types: monostable, ignition, and bistable.
Before defining these classes, we discuss the relationship between \eqref{eq:main} and prior work.

Reaction-diffusion equations are widely used to model the spread of a population in an environment.
We decompose this spread into three interlocking phenomena: \emph{invasion}, \emph{propagation}, and \emph{traveling waves}.
Invasion refers to the qualitative behavior of the solution as $t \to \infty$: does the population eventually inhabit its entire environment?
Such ecological dominance is not guaranteed---it depends on the reaction and initial condition.
When a population does invade, we are interested in quantitative aspects of its propagation.
How large of a region does the population occupy at a particular time?
On the whole space, an invading solution $u$ propagates asymptotically linearly in time.
That is, the level sets of $u(t, \anon)$ expand in space at a nearly constant rate, known as the \emph{asymptotic speed of propagation}.
Since the propagation eventually approaches this constant speed, we also search for \emph{traveling waves}: solutions which move at precisely constant speed.
These three spreading phenomena are well understood in homogeneous media in the whole space, and they guide our study of \eqref{eq:main}.

Invasion, propagation, and traveling waves were first systematically studied in pioneering works of Aronson and Weinberger \cite{AW} and Fife and McLeod \cite{FM} in the homogeneous setting.
These fundamental results inspired a vast literature, to which we cannot do justice.
We instead highlight a selection of works; for a wider view of the field, we direct the reader to the references therein.

Aronson and Weinberger proved the \emph{hair-trigger effect} for monostable reactions: nontrivial initial data \emph{always} invade the whole space.
Moreover, all nontrivial solutions with localized initial data eventually propagate at a common asymptotic speed.
The set of monostable reactions includes a special subclass, the so-called Fisher--KPP reactions, which are particularly amenable to linearized analysis.
When $f$ is Fisher--KPP, Bramson \cite{Bramson78, Bramson83} used probabilistic techniques to determine the position of level sets of solutions with great precision; for further results in this direction, see also \cite{Gartner, EvS, NRR, BBD, Graham}.

Ignition and bistable reactions behave differently.
In these cases, ${f \leq 0}$ when $u$ is small.
It follows that the population will go extinct as $t \to \infty$ if $u_0$ is sufficiently small.
On the other hand, sufficiently large initial data \emph{do} invade.
The precise nature of the threshold between extinction and invasion was a longstanding problem, first resolved by Zlato\v{s}~\cite{Zlatos1} for square initial data.
This result has been extended to wider classes of reactions and initial data by Du and Matano~\cite{MD} and Matano and Pol\'{a}\v{c}ik \mbox{\cite{MP1, MP2}}.

We emphasize that these results all hold in the homogeneous setting.
However, applications clearly motivate the study of \emph{inhomogeneous} media.
The most immediate model of inhomogeneity is a spatially dependent evolution equation.
Although pure traveling waves do not exist in typical inhomogeneous media, periodic equations admit generalizations known as \emph{pulsating fronts}.
In the whole space, Freidlin and G\"{a}rtner \cite{FG, Freidlin} and Hamel and the first author \cite{BH} have studied invasion, propagation, and pulsating fronts in periodic media; for more refined results in the Fisher--KPP case, see Hamel \emph{et al.} \cite{HNRR} and Shabani \cite{Shabani}.
In the aperiodic setting, traveling waves must be further generalized to \emph{transition fronts}: entire solutions which asymptotically resemble traveling waves.
For a variety of results on the existence of transition fronts, see Mellet, Roquejoffre, and Sire \cite{MRS} and works of Nolen, Roquejoffre, Ryzhik, and Zlato\v{s} \cite{NR, NRRZ, Zlatos2}.

There is a second important approach to inhomogeneity: we can work in a general domain rather than the whole space.
For instance, an impermeable inclusion in a material can be represented by a domain with a Neumann boundary.
Hamel, Matano, Weinberger, and the first author have investigated propagation and pulsating fronts in periodic domains \cite{Matano, BH, Weinberger, BHM}.
For Fisher--KPP reactions, Hamel, Nadirashvili, and the first author have characterized the spreading speed in both periodic and more general domains \cite{BHN1, BHN2, BHN3}.

Invasion can be a delicate matter in general domains.
For instance, bistable reactions exhibit a phenomenon known as \emph{blocking}: solutions may propagate initially, only to become obstructed by certain geometries.
Bouhours, Chapuisat and the first author \cite{BBC} and Ducasse and Rossi \cite{DR} have studied blocking in channels and periodic domains, respectively.
In the opposite direction, Rossi has recently established the hair-trigger effect for monostable reactions in quite general domains \cite{Rossi}.

Most of the above works confront a common difficulty: their systems vary along the direction of propagation.
To isolate the effects of boundary, it is helpful to remove this complication.
A significant body of work studies reaction-diffusion equations in \emph{cylinders} $\R \times \Omega$ with compact cross-sections $\Omega \subset \R^d$.
Then the problem is translation-invariant in the first coordinate, and solutions only propagate in this direction.
In fact, the equation itself can depend on the transverse coordinates without complicating the analysis.
Nirenberg and the first author \cite{BN}, Mallordy and Roquejoffre \cite{MR, Roquejoffre}, and Muratov and Novaga \cite{Muratov, MN} have all considered traveling waves and propagation in such cylindrical problems.

One can view the present work as an extension of these results to a cylinder with non-compact cross-section.
Indeed, \eqref{eq:main} is invariant under translations parallel to the boundary $\partial \hp$, and our domain may be viewed as the cylinder $\R \times (\R^{d - 1} \times \R_+)$.
Our problem thus  combines the challenges of the cylindrical and multivariate free settings: inhomogeneity and transverse non-compactness.
We study the simplest example with both features: the half-space with a homogeneous equation.
The lack of transverse compactness greatly complicates our analysis of propagation and our construction of traveling waves.
In this sense, our approach to the former has much in common with a recent work of Lou and Lu, who study invasion and propagation for certain Fisher--KPP reactions in cones with Dirichlet conditions \cite{LL}.
We discuss their work in greater detail after Theorem~\ref{thm:ASP} below.

As mentioned above, it is common to work with a Neumann boundary.
However, in the half-space, Neumann conditions reduce to the homogeneous problem.
Indeed, they are equivalent to a free evolution in the whole space that is even in one coordinate.
Here, we consider Dirichlet and Robin conditions.
The boundary thus absorbs mass, and may be viewed as a hostile inhomogeneity which destroys a fraction of the population upon contact.
Much less is understood about the effects of such absorbing boundary conditions.

In our study of the half-space, we are further motivated by ``road-field'' models, which include more general interactions between populations in a half-plane and on a line.
These systems were introduced by Roquejoffre, Rossi, and the first author in \mbox{\cite{BRR1, BRR2, BRR3}}.
They describe individuals moving back and forth between a two-dimensional ``field'' and its one-dimensional boundary, the ``road.''
We can interpret \eqref{eq:main} as a degenerate case of this model, in which individuals that hop on the road never leave it.
With the feedback between road and field broken, we are free to solely consider the population in the field, which suffers steady attrition at the boundary.

We now precisely define the monostable, ignition, and bistable reaction classes.
We always assume that the reaction $f$ is continuous and piecewise $\m C^1$.
In addition, our monostable reactions satisfy the following hypotheses:
\begin{enumerate}[label = \textup{(M\arabic*)}, leftmargin = 5em, labelsep = 0.6em, itemsep= 1ex, topsep = 1ex]
\item
  \label{hyp:mono}
  $f(0) = f(1) = 0$ \hspace{1ex} and \hspace{1ex} $f|_{(0,1)} > 0$;

\item
  \label{hyp:mono-derivs}
  $f'(0^+) > 0$ \hspace{1ex} and \hspace{1ex} $f'(1^-) < 0$.
\end{enumerate}
Ignition reactions obey:
\begin{enumerate}[label = \textup{(I\arabic*)}, leftmargin = 5em, labelsep = 0.6em, itemsep= 1ex, topsep = 1ex]
\item
  \label{hyp:ign}
  $f|_{[0, \theta] \cup \{1\}} \equiv 0$ \hspace{1ex} and \hspace{1ex} $f|_{(\theta, 1)} > 0$ for some $\theta \in (0, 1)$;

\item
  \label{hyp:ign-derivs}
  $f'(\theta^+) > 0$ \hspace{1ex} and \hspace{1ex} $f'(1^-) < 0$.
\end{enumerate}
Finally, bistable reactions satisfy:
\begin{enumerate}[label = \textup{(B\arabic*)}, leftmargin = 5em, labelsep = 0.6em, itemsep= 1ex, topsep = 1ex]
\item
  \label{hyp:bi}
  $f(0) = f(1) = 0,$ \hspace{1ex} $f|_{(0, \theta)} < 0$, \hspace{1ex} and \hspace{1ex} $f|_{(\theta, 1)} > 0$ for some $\theta \in (0, 1)$;

\item
  \label{hyp:bi-derivs}
  $f'(0^+) < 0$ \hspace{1ex} and \hspace{1ex} $f'(1^-) < 0$.
\end{enumerate}
Additionally, we will always assume:
\begin{enumerate}[label = \textup{(B3)}, leftmargin = 5em, labelsep = 0.6em, itemsep= 1ex, topsep = 1ex]
\item
  \label{hyp:bi-pos}
  $\displaystyle \int_0^1 f(r) \d r > 0.$
\end{enumerate}
That is, the state $1$ is ``more stable'' than the state $0$.
This ensures that the one-dimensional wave-speed of $f$ is positive.
This assumption is not part of the traditional definition of bistability, but for simplicity we always use ``bistable'' to mean \ref{hyp:bi}--\ref{hyp:bi-pos}.

Our endpoint assumptions on $f'$ can likely be relaxed somewhat, but we do not pursue the matter here.
For the sake of clarity, we extend $f$ by zero on $\R \setminus [0, 1]$.

We now consider the phenomenon of invasion in \eqref{eq:main}.
We typically work with compactly supported initial data $u(0, \anon) = u_0$ satisfying $0 \leq u_0 \leq 1$ and ${u_0 \not \equiv 0}$.
In the remainder of the paper, we write these two conditions as $0 \lneqq u_0 \leq 1$.
In the whole space, monostable reactions exhibit the hair-trigger effect \cite{AW}: any solution with initial data $0 \lneqq u_0 \leq 1$ converges locally uniformly to $1$, the stable zero of $f$, as $t \to \infty$.
In contrast, ignition and bistable reactions cause solutions with small $u_0$ to converge uniformly to $0$.
Nonetheless, sufficiently large $u_0$ still invade in these cases \cite{Kanel2, AW, FM}.

We prove the analogue of these results in $\hp$.
However, the constant function $1$ does not satisfy our boundary conditions on $\partial \hp$.
Rather, when solutions invade, we expect them to converge to a nonconstant steady state $\varphi$ in $\hp$ which is independent of $\tbf{x}'$.
That is, $\varphi = \varphi(y)$ should satisfy
\begin{equation}
  \label{eq:steady-ODE}
  \varphi'' + f(\varphi) = 0 \And \varphi'(0) = \varrho^{-1} \varphi(0).
\end{equation}
We show that this ODE has a unique nonzero bounded solution if $f$ is monostable or ignition.
However, when $f$ is bistable, uniqueness is only guaranteed under Dirichlet boundary conditions.
For this reason, we confine our study of bistable reactions to the Dirichlet case.
The long-time behavior of \eqref{eq:main} for $f$ bistable and $\varrho > 0$ remains an interesting open question.

When $f$ is monostable, we show that $\varphi$ is also the unique nonzero bounded steady state in $\hp$.
This uniqueness is less clear for ignition and bistable reactions, but $\varphi$ is the only bounded steady state which exceeds $\theta$ on large balls.
\begin{theorem}
  \label{thm:steady}\
  \begin{enumerate}[label = \textup{(\Alph*)}, leftmargin = 3em, labelsep = 0.6em, itemsep= 1ex, topsep = 1ex]
  \item
    \label{item:steady-mono}
    Let $f$ be monostable with $\varrho \in [0, \infty)$.
    Then $\varphi = \varphi(y)$ is the unique nonzero bounded steady state of \eqref{eq:main}.

  \item
    \label{item:steady-ign-bi}
    Let $f$ be ignition with $\varrho \in [0, \infty)$ or bistable with $\varrho = 0$.
    Then for all ${\delta \in (0, 1 - \theta)}$, there exists $R_{\trm{steady}}(\delta) > 0$ such that $\varphi = \varphi(y)$ is the unique bounded steady state of \eqref{eq:main} satisfying $\varphi|_B \geq \theta + \delta$ for some ball $B \subset \hp$ of radius $R_{\trm{steady}}(\delta)$.
  \end{enumerate}
\end{theorem}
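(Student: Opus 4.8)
\emph{Strategy.} The plan is to show that any bounded steady state $u$ of \eqref{eq:main} obeying the stated hypothesis equals $\varphi$. First, some reductions: by interior and boundary elliptic estimates $u\in C^2(\hp)\cap C^1(\bar\hp)$, and since $f$ is Lipschitz with support in $[0,1]$, a maximum-principle argument — comparing with the constant super/subsolutions $1$ and $0$, and excluding $\sup u>1$ by blowing up at a maximizing sequence and invoking the one-dimensional classification of $\tbf x'$-independent solutions — gives $0\le u\le 1$; since $u\not\equiv 0$, the strong maximum principle and (for $\varrho>0$) the Hopf lemma give $0<u$ in $\hp$. We use freely that $\varphi$ is increasing, tends to $1$, vanishes at $0$ exactly when $\varrho=0$, and is the unique nonzero bounded solution of \eqref{eq:steady-ODE}.

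\emph{The crux: behavior as $y\to\infty$.} The heart of the matter is the claim that $u(\tbf x',y)\to1$ as $y\to\infty$, uniformly in $\tbf x'$. For ignition $f$ (and bistable $f$ with $\varrho=0$), the hypothesis $u|_B\ge\theta+\delta$ together with the universal gradient bound forces $\bar B$ to sit at a fixed positive distance from $\partial\hp$; taking $R_{\mathrm{steady}}(\delta)$ large, one can place under $u$ a compactly supported mountain-pass (or large-ball ground-state) subsolution $\underline u$, so the parabolic flow from $\underline u$ is nondecreasing, stays $\le u$, invades, and converges to a bounded steady state $U\le u$ with $U\ge\theta+\delta'$ on a half-space $\{y\ge h_0\}$ for suitable $\delta',h_0>0$. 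Then $m(y):=\inf_{\tbf x'}U(\tbf x',y)$ is a bounded viscosity supersolution of $m''+f(m)\le0$ on $(h_0,\infty)$ with $m\ge\theta+\delta'$, and because $f>0$ on $(\theta,1)$ a phase-plane argument (concavity of $m$ while $m\in(\theta,1)$, plus boundedness) forces $m\to1$; hence $U$, and therefore $u$, tends uniformly to $1$ as $y\to\infty$. For monostable $f$ I would reduce to this case: approximating $f$ from below by ignition reactions $f_\vartheta\nearrow f$, the steady state $u$ is a supersolution of the $f_\vartheta$-problem which — by the interior Harnack inequality and $u>0$ — exceeds $\vartheta+\delta$ on balls, so the ignition conclusion applies, and one lets $\vartheta\to0$ using $\varphi_\vartheta\to\varphi$.

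\emph{Sliding.} Knowing $u\to1$ uniformly as $y\to\infty$ (whence also an exponential barrier $1-u\le Ce^{-\nu y}$ for every $\nu<\mu:=\sqrt{-f'(1^-)}$), I finish with two sliding arguments. For the upper bound I slide the shifted profiles $\varphi(\,\cdot\,+\tau)$, $\tau\ge0$, which solve the equation and are supersolutions of the boundary condition: for $\varrho>0$ because $\varphi'(\tau)\le\varphi'(0)=\varrho^{-1}\varphi(0)\le\varrho^{-1}\varphi(\tau)$ (using $\varphi''=-f(\varphi)\le0$, valid for ignition and monostable $f$), and for $\varrho=0$ because $\varphi(\tau)\ge0$. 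Perturbing these upward by $\varepsilon\zeta$ with $\zeta>0$ chosen to keep $\varphi(\,\cdot\,+\tau)+\varepsilon\zeta$ a supersolution while dominating $1-u$ near $y=\infty$ (e.g. $\zeta$ built from $\varphi'$, which solves the linearized equation, together with an $e^{-\nu y}$ tail, $\nu<\mu$), one has $u\le\varphi(\,\cdot\,+\tau)+\varepsilon\zeta$ for $\tau$ large; decreasing $\tau$ to $\tau_*:=\inf\{\tau\ge0:\ u\le\varphi(\,\cdot\,+\tau)+\varepsilon\zeta\}$ and excluding $\tau_*>0$ — interior contact by the strong maximum principle (it would force $u$ to be $\tbf x'$-independent and violate either the equation or the boundary condition), boundary contact by the Hopf lemma, and escape to $y=\infty$ by the uniform limit — then sending $\varepsilon\to0$, yields $u\le\varphi$. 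A symmetric argument using translates of $\varphi$ (extended past $y=0$ along the ODE) perturbed downward, together with $u>0$ in $\hp$ to exclude interior contact at positive height, gives $u\ge\varphi$. Hence $u=\varphi$.

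\emph{Main obstacle.} The real difficulty is transverse non-compactness: since $y$ runs over an unbounded half-line rather than a compact cross-section, $u$ need not a priori converge to $1$ at $y=\infty$, and the contact points produced by the sliding method threaten to run off to $y=+\infty$. I expect the bulk of the work to be the crux lemma — extracting from the hypothesis a steady state dominating $\theta+\delta'$ on a half-space and transporting this bound to infinity by a one-dimensional phase-plane comparison — after which the sliding is essentially classical, modulo the construction of perturbed barriers that remain super/subsolutions globally (needed precisely because $u$ and the shifts of $\varphi$ reach $1$ at the same exponential rate) and modulo routine but delicate care near $\partial\hp$, where $u$ and the barriers vanish (Dirichlet) or are comparably small (Robin) and one needs a form of the Hopf lemma uniform in $\tbf x'$.
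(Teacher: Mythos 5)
Your proposal takes a genuinely different route from the paper, and while the high-level intuition is right (extract a ball bound, propagate it, control behavior as $y\to\infty$, conclude by a Liouville/sliding argument), it is substantially more elaborate than the paper's argument and the sliding step has a real gap.

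\emph{What the paper does.} For both parts, the paper uses a purely dynamic ``sandwich,'' never proving uniform convergence at $y=\infty$ and never sliding vertical translates of $\varphi$. For the upper bound: evolve the constant $\max\{\sup\phi,1\}$ under \eqref{eq:main}; this is $\tbf x'$-independent, decreasing in $t$, and converges to a bounded solution of \eqref{eq:steady-ODE}, which by Lemma~\ref{lem:steady-ODE-ign-bi} (resp.\ Lemma~\ref{lem:steady-ODE-mono}) must be $\varphi$; comparison gives $\phi\le\varphi$. For the lower bound: build the compactly supported radial subsolution $v$, slide it under $\phi$ in $\tbf x'$ (strong maximum principle) to obtain the \emph{$\tbf x'$-independent} bound $\phi\ge\tau_{y_0}\mr\phi_+(y)$; then evolve this one-dimensional, compactly supported subsolution upward — it converges to $\varphi$ by the half-line classification, so $\phi\ge\varphi$. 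Part~\ref{item:steady-mono} is handled the same way with the principal Dirichlet eigenfunction on a ball as the subsolution. The whole proof avoids any $y\to\infty$ asymptotics.

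\emph{Where your proof diverges.} You already have the key ingredient in your ``crux'': the compactly supported subsolution under $u$ and the steady state $U\le u$ it produces. But you then go through a phase-plane argument on $\inf_{\tbf x'}U$ to show $U\to 1$, and afterwards a sliding argument with vertical translates $\varphi(\cdot+\tau)\pm\eps\zeta$. This detour is unnecessary: once the subsolution has been slid in $\tbf x'$ to give an $\tbf x'$-independent lower bound, you can evolve it directly and compare, exactly as the paper does. The phase-plane step and the entire sliding section can be deleted. Likewise, the sliding from above is redundant given that evolving the constant $\max\{\sup u,1\}$ down yields $u\le\varphi$ immediately.

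\emph{The gap.} The sliding step as written does not close. You propose sliding $\varphi(\cdot+\tau)+\eps\zeta$ with $\zeta$ ``built from $\varphi'$ together with an $e^{-\nu y}$ tail,'' $\nu<\mu=\sqrt{-f'(1^-)}$. The $e^{-\nu y}$ tail is needed precisely to dominate $1-u$ as $y\to\infty$, but it then has to satisfy the linearized supersolution inequality $\zeta''+f'(\varphi(\cdot+\tau))\zeta\le 0$ \emph{everywhere}, including near $y=0$ where, for small $\tau$, $\varphi(\cdot+\tau)$ is small and $f'\ge 0$ (monostable) or $f'=0$ (ignition). There $e^{-\nu y}$ is convex, so the inequality fails, and it is not clear how to patch $\zeta$ to be a strict supersolution on $[0,\infty)$ simultaneously for all $\tau\ge 0$. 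Similarly, the preliminary reduction ``$\sup u>1$ is excluded by blowing up at a maximizing sequence'' needs care: if $y_n\to\infty$ the blow-up limit lives in the whole space $\R^{d+1}$, where a constant $>1$ solves the equation with no contradiction from any boundary condition; the paper's dynamic argument (evolve $\max\{\sup\phi,1\}$) is the clean fix. These are not merely presentational issues — they are the places where the transverse non-compactness you correctly identified as the central obstacle actually bites, and the paper's construction is designed precisely to sidestep them.
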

\noindent
We study the uniqueness of steady states in greater depth and in other domains in a forthcoming work.

Next, we consider propagation in \eqref{eq:main}.
In the whole space, solutions with sufficiently large initial data converge to $1$ locally uniformly as $t \to \infty$.
Moreover, the transition $u \to 1$ propagates asymptotically linearly in time at a speed $c_* > 0$ depending only on $f$.
We show that the dynamics of the transition $u \to \varphi$ in $\hp$ closely resemble this behavior.
As in the whole space, the asymptotic speed of propagation is $c_*$.
\begin{theorem}
  \label{thm:ASP}
  Throughout, let $u$ solve \eqref{eq:main} with ${0 \lneqq u_0 \leq 1}$ compactly supported.
  \begin{enumerate}[label = \textup{(\Alph*)}, leftmargin = 3em, labelsep = 0.6em, itemsep= 1ex, topsep = 1ex]
  \item
    \label{item:ASP-mono}
    Let $f$ be monostable with $\varrho \in [0, \infty)$.
    Then
    \begin{equation}
      \label{eq:lower-ASP}
      \limsup_{t \to \infty} \left[\sup_{\abs{(\tbf{x}', y)} \leq ct} \big|u(t, \tbf{x}', y) - \varphi(y)\big|\right] = 0 \quad \textrm{for all } c \in [0, c_*)
    \end{equation}
    and
    \begin{equation}
      \label{eq:upper-ASP}
      \limsup_{t \to \infty} \left[\sup_{\abs{(\tbf{x}', y)} \geq ct} u(t, \tbf{x}', y)\right] = 0 \quad \textrm{for all } c > c_*.
    \end{equation}

  \item
    \label{item:ASP-ign-bi}
    Let $f$ be ignition with $\varrho \in [0, \infty)$ or bistable with $\varrho = 0$.
    If $u_0 \leq \theta$, then $u(t, \anon) \to 0$ uniformly in $\bar \hp$ as $t \to \infty$.
    On the other hand, suppose that ${u_0|_B \geq \theta + \delta}$ for some $\delta \in (0, 1 - \theta)$ and some ball $B \subset \hp$ of radius $R_{\trm{steady}}(\delta)$.
    Then $u$ satisfies \eqref{eq:lower-ASP} and \eqref{eq:upper-ASP}.
  \end{enumerate}
\end{theorem}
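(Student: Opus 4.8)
We organise the argument around four points: extinction below threshold, the outer bound~\eqref{eq:upper-ASP}, invasion together with the inner upper bound, and the inner lower bound. Only the last is serious.

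\smallskip
\noindent\emph{Reduction to the whole space.} Let $\bar U$ solve $\partial_t \bar U = \Delta \bar U + f(\bar U)$ on $\R^{d+1}$ with the even reflection of $u_0$ across $\partial \hp$ as initial datum; this datum is compactly supported and bounded by $1$. Since $\bar U(t, \anon)$ is even in $y$, its normal derivative on $\partial \hp$ vanishes, while our boundary condition only removes mass; so the parabolic comparison principle for bounded functions, together with the Hopf lemma on $\partial \hp$, gives $u \le \bar U \le 1$ on $[0,\infty) \times \bar \hp$. If $f$ is ignition or bistable and $u_0 \le \theta$, then $\bar U \le \theta$ for all time, hence $f(\bar U) \le 0$ and $\bar U$ is a subsolution of the heat equation, so $\bar U(t, \anon) \to 0$ uniformly and a fortiori $u(t, \anon) \to 0$. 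In the spreading cases, a whole-space solution with compactly supported data cannot outrun speed $c_*$: bounding $\bar U$ above, in each direction, by the one-dimensional traveling wave of speed $c_*$ pushed far ahead shows $\sup_{\abs{\tbf{x}} \ge ct} \bar U(t, \tbf{x}) \to 0$ for every $c > c_*$. As $\abs{(\tbf{x}', y)} = \abs{\tbf{x}}$, this is~\eqref{eq:upper-ASP}.

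\smallskip
\noindent\emph{Invasion and the inner upper bound.} After a unit of time $u > 0$ in $\hp$. In the monostable case one fits beneath $u(1, \anon)$ the stationary subsolution $\eta \Phi_R$, with $\Phi_R$ the principal Dirichlet eigenfunction of a ball $B_R$ compactly contained in $\hp$ chosen so that $\lambda_1(B_R) < f'(0^+)$, and $\eta$ small; in the ignition and bistable cases, the hypothesis $u_0|_B \ge \theta + \delta$ on a ball of radius $R_{\trm{steady}}(\delta)$ is precisely what allows one to fit an analogous stationary subsolution lifted near the level $\theta$. The solution it generates is time-increasing and bounded by $1$, hence converges to a bounded steady state that is nonzero and, in the ignition/bistable case, exceeds $\theta + \delta/2$ on a large ball; by Theorem~\ref{thm:steady} this limit is $\varphi$, so $\liminf_{t \to \infty} u(t, \anon) \ge \varphi$ locally uniformly. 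For the reverse inequality, any solution $\bar v = \bar v(t,y)$ of the one-dimensional problem $\partial_t \bar v = \partial_{yy} \bar v + f(\bar v)$ on $\R_+$ with the boundary condition of~\eqref{eq:main} becomes, viewed on $\bar \hp$, a genuine supersolution of~\eqref{eq:main}. Taking $\bar v(0, \anon)$ bounded, above $\sup_{\tbf{x}'} u(T_0, \tbf{x}', \anon)$ for a large time $T_0$, and above threshold in the bulk, the one-dimensional theory --- here $\varphi$ is the unique nonzero bounded steady state, and it is exponentially stable because $\varphi' > 0$ solves the linearised equation while violating the boundary condition, forcing a negative principal eigenvalue, and the essential spectrum sits below $f'(1^-) < 0$ --- gives $\bar v(t, \anon) \to \varphi$ uniformly on $\R_+$. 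Hence $u \le \bar v$ yields $\limsup_{t \to \infty} \sup_{\bar \hp} (u(t, \anon) - \varphi) \le 0$, which is the inner upper bound (even everywhere); combined with the last assertion this already gives $u(t, \anon) \to \varphi$ locally uniformly.

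\smallskip
\noindent\emph{Inner lower bound.} It remains to prove
\begin{equation*}
  \liminf_{t \to \infty}\ \inf_{\abs{(\tbf{x}', y)} \le ct} \big(u(t, \tbf{x}', y) - \varphi(y)\big) \ge 0 \qquad (c \in [0, c_*)).
\end{equation*}
Negating and using translation invariance in $\tbf{x}'$ together with parabolic compactness, one extracts an entire solution of~\eqref{eq:main} lying at distance $\varepsilon$ below $\varphi$ at the space-time origin yet which the spreading should squeeze up to $\varphi$ --- a contradiction once the spreading is quantified. Two regimes appear. Points of the ball with $y$ of order $ct$ sit at distance of order $ct$ from $\partial \hp$, where~\eqref{eq:main} is locally the whole-space equation; since by invasion $u(T_0, \anon) \ge 1 - \varepsilon$ on a large ball deep in $\hp$, the classical whole-space result propagates that bulk outward at speed $c_* - o(1)$ and covers these points. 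The essential regime is propagation \emph{parallel} to $\partial \hp$: for each $Y > 0$ and $c < c_*$ one must show $u(t, \tbf{x}', y) \ge \varphi(y) - \varepsilon$ on $\{\abs{\tbf{x}'} \le ct,\ 0 \le y \le Y\}$ for large $t$. One compares $u$, once it is close to $\varphi$ on a large box, with a subsolution of~\eqref{eq:main} that is compactly supported in $\tbf{x}'$, has transverse profile near $\varphi$, and expands at speed $c$: the half-space analogue of the Aronson--Weinberger expanding subsolution. It is built from the half-space traveling wave $\Phi = \Phi(\xi, y)$ of speed $c_*$ connecting $\varphi$ at $\xi = -\infty$ to $0$ at $\xi = +\infty$ --- whose profile translated at any speed $c < c_*$ is a subsolution, since $\partial_\xi \Phi < 0$ --- damped in the transverse directions via a generalised-principal-eigenvalue problem on large but finite $\tbf{x}'$-cylinders, in the spirit of Lou and Lu; minimality of $c_*$ among half-space wave speeds (monostable case) excludes slower propagation, while in the ignition and bistable cases there is only one such speed and the degenerate tail of $f$ near $0$ avoids leading-edge subtleties. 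This parallel-propagation step is the crux: because the cross-section $\R^{d-1} \times \R_+$ is non-compact, no single compactly supported bump subsolution can simultaneously expand at the sharp speed $c_*$ and carry the transverse profile $\varphi$, so that controlling the diffusion of mass toward $y = +\infty$ without slowing the $\tbf{x}'$-front --- the feature that sets Theorem~\ref{thm:ASP} apart from its cylindrical and whole-space predecessors --- forces the eigenvalue construction above, and I expect it to be the main obstacle.
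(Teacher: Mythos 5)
Your reduction to the whole space for \eqref{eq:upper-ASP} and for extinction below $\theta$ matches the paper (Proposition~\ref{prop:upper}), your invasion argument via a stationary subsolution built from a Dirichlet eigenfunction matches Lemmas~\ref{lem:invasion-ign-bi} and \ref{lem:invasion-mono}, and your uniform inner upper bound via the one-dimensional decreasing supersolution is also the paper's. So three of your four points are sound and aligned with the paper. The fourth is the whole theorem, and there you have a gap rather than a proof.

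For the inner lower bound you propose to build an expanding subsolution from the half-space traveling wave $\Phi$ of Theorem~\ref{thm:TW}, damped transversally ``via a generalised-principal-eigenvalue problem on large but finite $\tbf{x}'$-cylinders, in the spirit of Lou and Lu,'' and then you write that you ``expect it to be the main obstacle.'' That is an accurate diagnosis of the difficulty, not a resolution of it; the eigenvalue damping is never constructed, and it is unclear that such a construction can be made to expand at sharp speed $c < c_*$ while keeping the transverse profile $\varphi$. There are also two structural problems with the sketch as it stands. First, using the half-space wave $\Phi$ is not available at this stage: in the paper the slower-speed nonexistence half of Theorem~\ref{thm:TW} is \emph{proved from} Theorem~\ref{thm:ASP}, and while the existence half could in principle be proved first, $\Phi$ does not touch $0$ at finite $\xi$, so it cannot serve as a compactly supported bump without precisely the modification you leave unbuilt. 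Second, your compactness-and-contradiction framing (``extract an entire solution \dots which the spreading should squeeze up to $\varphi$'') is circular, since the squeezing is exactly what is to be proved. The paper takes a genuinely different and fully explicit route: construct traveling waves $\Phi_L$ in \emph{strips} $\R\times[0,L]$ of bounded width (Section~\ref{sec:strip-TW}), show their speeds $c_L\to c_*$ (Lemma~\ref{lem:speed-convergence}), pass to an $\eps$-modified reaction $\ubar f$ so that $\ubar\Phi_L$ tends to $-\eps<0$ at $+\infty$ and hence has a compactly supported positive part, and then slot $\ubar\Phi_L(|\tbf{x}'| - c'(t-T) - \ubar R + \ubar x, y)$ into an expanding annulus of thickness $L$ as a radial subsolution (Propositions~\ref{prop:lower-slab-ign-bi}, \ref{prop:lower-slab-mono}); the bounded width $L$ supplies the transverse compactness that your cylinder eigenvalue problem was meant to create. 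Finally, the extension into the interior $y \gtrsim \ell$ via a modified one-dimensional wave in a shifted half-space (Proposition~\ref{prop:ASP-interior}) makes precise your heuristic about points deep in $\hp$, but it logically depends on the slab bound first being established.
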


Lou and Lu recently established the asymptotic speed of propagation in general convex cones for certain Fisher--KPP reactions with Dirichlet boundary conditions \cite{LL}.
They thus handle domains which are significantly more general than the half-space.
However, their results seem confined to so-called ``strong-KPP'' reactions with Dirichlet conditions.
The question of propagation in cones with Robin conditions and more general reactions remains open.
More broadly, the nature of invasion and propagation in general domains is an important open problem.

We now turn to traveling waves.
We say $\Phi \colon \bar \hp \to [0, 1]$ is a traveling wave of speed $c > 0$ and direction $\tbf e \in S^d$ if it is a function of $\tbf e \cdot \tbf{x}'$ and $y$ alone, $\Phi(\tbf e \cdot \tbf{x}' - ct, y)$ solves \eqref{eq:main}, and
\begin{equation}
  \label{eq:connection}
  \Phi(-\infty, y) = \varphi(y) \And \Phi(+\infty, y) = 0
\end{equation}
locally uniformly in $y \in [0, \infty)$.
That is, the wave moves parallel to $\partial \hp$ at speed $c$ in direction $\tbf e$, and connects the steady states $\varphi$ and $0$.
Its level sets are affine subspaces of codimension $2$, as $\Phi$ only depends on two spatial coordinates.
We may thus restrict our study of traveling waves to the half-\emph{plane}.
Then $d = 1$ and we denote position by $\tbf{x} = (x, y)$.

In one dimension, monostable reactions admit traveling waves precisely when $c \geq c_*$, where $c_*$ agrees with the asymptotic speed of propagation \cite{AW}.
In contrast, ignition and bistable reactions admit one-dimensional traveling waves precisely at speed $c_*$ \cite{AW, FM, Kanel1}.
We show nearly the same behavior in the absorbing half-plane.
\begin{theorem}
  \label{thm:TW}
  Let $d = 1$.
  No traveling wave has speed $c \in [0, c_*)$.
  Furthermore:
  \begin{enumerate}[label = \textup{(\Alph*)}, leftmargin = 3em, labelsep = 0.6em, itemsep= 1ex, topsep = 1ex]
  \item
    \label{item:TW-mono}
    Let $f$ be monostable with $\varrho \in [0, \infty).$
    Then there exists a traveling wave $\Phi$ of speed $c$ for each $c \geq c_*$.

  \item
    \label{item:TW-ign-bi}
    Let $f$ be ignition with $\varrho \in [0, \infty)$ or bistable with $\varrho = 0$.
    Then there exists a traveling wave $\Phi$ of speed $c_*$.
  \end{enumerate}
  In each case, $\Phi$ satisfies ${0 < \Phi < 1}$, $\partial_x \Phi < 0$, and $\partial_y \Phi > 0$ in $\hp$.
\end{theorem}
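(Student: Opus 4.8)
The plan is to first reduce to a profile equation and dispose of the non-existence claim, then construct waves by approximation on truncated strips, and finally extract the qualitative properties.

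\emph{Reduction and non-existence.} Since $d = 1$, writing $\xi = x - ct$ a traveling wave is a profile $\Phi = \Phi(\xi, y) \colon \bar\hp \to [0,1]$ solving
\[
  \Delta \Phi + c\, \partial_\xi \Phi + f(\Phi) = 0 \text{ in } \hp, \qquad \partial_y \Phi = \varrho^{-1}\Phi \text{ on } \partial\hp,
\]
together with the connection \eqref{eq:connection}. To rule out $c \in [0, c_*)$, note that $\Phi(x - ct, y)$ is then an entire solution of \eqref{eq:main}. Since $\varphi(y) \to 1$ as $y\to\infty$ and $\Phi(-\infty,\cdot) = \varphi$, I can pick compactly supported $u_0$ with $0 \lneqq u_0 \le \Phi$ meeting the initial-data hypothesis of Theorem~\ref{thm:ASP}: automatic in the monostable case, and obtained in the ignition or bistable case by centering a ball of radius $R_{\trm{steady}}(\delta)$ far from $\partial\hp$ and far to the left, where $\Phi$ is close to $1$. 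Comparison gives $u(t,\cdot) \le \Phi(\cdot - ct,\cdot)$ for all $t$, yet for any $c'' \in (c, c_*)$ and fixed $y_* > 0$, \eqref{eq:lower-ASP} forces $u(t, c'' t, y_*) \to \varphi(y_*) > 0$ while $\Phi(c'' t - c t, y_*) \to \Phi(+\infty, y_*) = 0$ --- a contradiction.

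\emph{Existence at the critical speed.} For the ignition and bistable cases, and for the endpoint $c = c_*$ of the monostable range, I would build the wave in the strip $S_a \coloneqq \R \times (0, a)$ and then let $a \to \infty$. Because $(0,a)$ is compact, $S_a$ is a cylinder, and the cylinder theory for such problems (in the spirit of \cite{BN, MR}), or a direct construction by degree theory and sub- and supersolutions, produces a wave $\Phi_a$ of some speed $c_a$ connecting $0$ to the transverse ground state $\varphi_a$ --- the solution of \eqref{eq:steady-ODE} on $(0,a)$ with a Neumann condition at $y = a$ --- with $\partial_\xi \Phi_a < 0$ and $\partial_y \Phi_a > 0$; for monostable $f$ one likewise gets such waves of every speed at least the strip's minimal speed $c_{*,a}$. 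Comparison with the one-dimensional wave $U_{c_*}$ and with $\varphi_a$ (for instance via the supersolution $\min\{\varphi_a(y), U_{c_*}(\xi - \xi_0)\}$ and a matching subsolution) yields the a priori bounds $c_a \to c_*$, resp.\ $c_{*,a} \to c_*$, and keeps $\Phi_a$ from degenerating; this is the technically delicate step, precisely because of the non-compact limit. Normalizing $\Phi_a(0, y_*) = \theta$ for a fixed $y_* > 0$ with $\varphi(y_*) > \theta$ and applying interior elliptic estimates, a subsequence converges locally uniformly to an entire profile $\Phi$ on $\hp$ solving the profile equation with $c = c_*$. Monotonicity in $\xi$ makes $\Phi(\pm\infty, y)$ exist and solve \eqref{eq:steady-ODE}, so Theorem~\ref{thm:steady} forces $\Phi(-\infty,\cdot) = \varphi$ (the normalization excludes the zero solution) and $\Phi(+\infty,\cdot) = 0$ (monotonicity and the normalization exclude $\varphi$), which is \eqref{eq:connection}. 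A more dynamical alternative is to obtain $\Phi$ as a local limit of time-shifts $u(t_n + \cdot,\ \cdot + X(t_n),\ \cdot)$ of a solution launched from monotone front-like data, where $X(t)$ tracks a level set so that $X(t)/t \to c_*$ by Theorem~\ref{thm:ASP}; there the delicate point is instead to upgrade the limit to a genuine constant-speed front, again using the uniqueness of steady states between $0$ and $\varphi$.

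\emph{Supercritical monostable waves and qualitative properties.} For monostable $f$ and a prescribed $c > c_*$, the strip construction applies once $a$ is large enough that $c_{*,a} < c$, and the resulting strip waves of speed $c$ converge as $a \to \infty$; alternatively one works directly in $\hp$ with sub- and supersolutions assembled from $\varphi$ and the exponential tails of $U_c$. In every case, $0 < \Phi < 1$ in $\hp$ follows from the strong maximum principle (and the Hopf lemma at $\partial\hp$ when $\varrho = 0$), $\partial_x \Phi < 0$ from the Berestycki--Nirenberg sliding method in $\xi$ using the ordered limits \eqref{eq:connection}, and $\partial_y \Phi > 0$ from a moving-plane argument in the transverse variable together with the Hopf lemma and the monotonicity of $\varphi$. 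I expect the main obstacle to be the transverse non-compactness: showing that the strip waves neither vanish nor drift toward $y = \infty$ as $a \to \infty$ --- where the $\varphi$-based supersolution and the transverse monotonicity are essential --- and, in the critical case, confirming that the limiting entire solution really travels at the constant speed $c_*$ rather than being some more elaborate object, which rests on Theorems~\ref{thm:steady} and~\ref{thm:ASP}.
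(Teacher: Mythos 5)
Your non-existence argument is essentially the paper's: compare a slower purported wave against a solution that propagates at speed $c' \in (c, c_*)$ by Theorem~\ref{thm:ASP}, choosing initial data that fits under $\Phi$ thanks to $\Phi(-\infty, \cdot) = \varphi$ and $\varphi(+\infty) = 1$. That part is correct.

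For existence, the high-level plan --- build waves on truncated strips, control the speed, and pass to the limit --- matches the paper's strategy for the ignition/bistable case. But there is a genuine gap that you have not addressed, and the paper flags it as ``one of the most delicate issues in the paper.'' On a strip of width $L$ (Dirichlet or symmetric Robin at $y=L$, or, as you propose, Neumann), the transverse ODE has, besides $0$ and the large stable state $\varphi_L$, a third \emph{unstable} nonzero solution $\psi_L$ for all large $L$ (Lemma~\ref{lem:strip-ODE-ign-bi}). When you take the strip wave's limit $x\to+\infty$, monotonicity only gives you a steady state --- it does not tell you that this limit is $0$ rather than $\psi_L$. Your sketch simply asserts that the cylinder theory ``produces a wave connecting $0$ to $\varphi_L$,'' but that is precisely what must be proven. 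The paper proves it in Proposition~\ref{prop:strip-TW} by introducing a reduced reaction $\tilde f$ cut off just above $\vartheta$, whose one-dimensional wave $\tilde U$ has small speed $\tilde c \le c_L$, and then sliding $\tilde U$ as a supersolution to pin the approximate waves below $\tilde U$ on a half-strip, uniformly in the truncation parameter. Without an argument of this kind, the construction can converge to a ``half-wave'' connecting $\varphi_L$ to $\psi_L$ and the limit in $L$ does not produce the desired profile. Relatedly, the uniqueness of the strip wave (used in the paper via Vega's theorem, Lemma~\ref{lem:TW-unique}) depends on the energy ordering $\mathcal H(\varphi_L) < \mathcal H(0) < \mathcal H(\psi_L)$; switching to a Neumann cap at $y=a$, as you propose, would require re-deriving this ordering and the two-solution structure from scratch.

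Two further gaps. First, your claim that for monostable $f$ the strip problem admits waves of every speed $\ge c_{*,a}$ is not developed (the paper never constructs strip waves for monostable reactions). In fact, the paper proves Theorem~\ref{thm:TW}\ref{item:TW-mono} for \emph{all} $c \ge c_*$ by a direct construction in the half-plane in the spirit of \cite{BRR3}, using an explicit sine-based subsolution $kv$ and the time-$1$ evolution of the one-dimensional wave $U^c$ as a supersolution, with no strip approximation at all. This is a genuinely different route from what you sketch, and it is needed to access the supercritical speeds. Second, the claim $c_a \to c_*$ is stated without an argument; the paper proves it (Lemma~\ref{lem:speed-convergence}) by a compactness argument followed by comparison against Aronson--Weinberger spreading in $\R^2$, which is not merely ``comparison with $U_{c_*}$.'' The qualitative conclusions $\partial_x\Phi < 0$, $\partial_y\Phi > 0$ are, as you say, reachable by sliding and Hopf, though in the paper they are inherited from the strip waves (and from the symmetric Robin waves $\Phi_L^{\mathrm{sym}}$ when $\varrho>0$, another point your sketch does not handle) rather than re-derived by a moving-plane argument in $y$.
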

Significantly, we are unable to rule out ignition or bistable waves whose speeds exceed $c_*$.
In fact, we expect that so-called \emph{conical} waves of higher speed do exist.
This has been confirmed in the whole space; see, for instance, works of Hamel, Monneau, and Roquejoffre \cite{HMR1, HMR2} and Wang and Bu \cite{BW}.

As is clear from the theorem statements above, our results and methods vary between the monostable and ignition/bistable cases.
Although our monostable results are easier to state, their proofs rely on the ignition theory.
We therefore prove part (B) of each of our main theorems first.
We study ignition and bistable steady states and prove Theorem~\ref{thm:steady}\ref{item:steady-ign-bi} in Section~\ref{sec:steady-ign-bi}.
In Section~\ref{sec:strip-TW}, we develop the theory of traveling waves in \emph{strips} of bounded width.
Using waves in strips, we prove Theorems~\ref{thm:ASP}\ref{item:ASP-ign-bi} and \ref{thm:TW}\ref{item:TW-ign-bi} in Sections~\ref{sec:ASP-ign-bi} and \ref{sec:TW-ign-bi}, respectively.

We then pivot to monostable reactions.
We prove Theorem~\ref{thm:steady}\ref{item:steady-mono} for monostable steady states in Section~\ref{sec:steady-mono}.
Using ignition waves in strips, we prove Theorem~\ref{thm:ASP}\ref{item:ASP-mono} in Section~\ref{sec:ASP-mono}.
We close with monostable traveling waves and establish Theorem~\ref{thm:TW}\ref{item:TW-mono} in Section~\ref{sec:TW-mono}.

\section*{Acknowledgements}

This work was initiated while HB was the Poincar\'{e} visiting professor in 2019 in the Department of Mathematics at Stanford University, whose support is gratefully acknowledged.
We also thank l'\'{E}cole des hautes \'{e}tudes en sciences sociales for its generous support and hospitality.
CG was additionally supported by the Fannie and John~Hertz Foundation and by NSF grant DGE-1656518.

\section{Ignition and bistable steady states}
\label{sec:steady-ign-bi}

To begin, we let $f$ be ignition or bistable and consider the steady states of \eqref{eq:main} in various domains.
The simplest case is the half-line, which reduces to the ODE \eqref{eq:steady-ODE}.
Since $\hp = \R^d \times \R_+$, we can transfer states in the half-line to the half-space.
We then show that the half-space has no other steady states which exceed $\theta$ on large balls.
In Section~\ref{sec:strip-TW}, we will construct traveling waves in strips $\R \times [0, L]$.
We must therefore understand steady states in bounded intervals $[0, L]$.
This matter is quite delicate, and takes up the majority of this section.

\subsection{Steady states in the half-line}

\begin{lemma}
  \label{lem:steady-ODE-ign-bi}
  Let $f$ be ignition with $\varrho \in [0, \infty)$ or bistable with $\varrho = 0$.
  Then the ODE \eqref{eq:steady-ODE} has a unique nonzero bounded solution $\varphi$.
  Furthermore, $\varphi$ satisfies $0 \leq \varphi < 1$, $\varphi' > 0$, and $\varphi(+\infty) = 1$.
\end{lemma}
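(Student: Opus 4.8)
The plan is to analyze the ODE \eqref{eq:steady-ODE} via a phase-plane argument, treating the Dirichlet and Robin cases together by parametrizing solutions by their boundary data. Write $p = \varphi'$. A bounded solution on $[0,\infty)$ must have $\varphi$ bounded and, since $\varphi'' = -f(\varphi)$, one first checks that $\varphi'(y)$ tends to a limit which must be $0$ (otherwise $\varphi$ is unbounded), so $(\varphi(+\infty), \varphi'(+\infty))$ is a rest point of the system $\varphi' = p$, $p' = -f(\varphi)$, i.e.\ $f(\varphi(+\infty)) = 0$. Using the energy $E(y) = \tfrac12 \varphi'(y)^2 + F(\varphi(y))$ with $F(s) = \int_0^s f$, which is nonincreasing along trajectories only where — actually $E' = \varphi'(\varphi'' + f(\varphi)) = 0$, so $E$ is \emph{conserved}. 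Hence $\tfrac12 p^2 + F(\varphi) \equiv E_0$ is constant, and the trajectory lies on a level curve of $E$. Boundedness forces the trajectory to limit onto a rest point as $y \to +\infty$, and combined with the conservation law this pins down $E_0 = F(\varphi(+\infty))$ with $f(\varphi(+\infty)) = 0$; given the sign structure of $f$ (for ignition, $F$ is nondecreasing and $F(1) = \int_0^1 f > 0$ in the bistable case by \ref{hyp:bi-pos}), the only admissible limit value turns out to be $1$, giving $E_0 = F(1)$.

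Next I would impose the boundary condition to select the correct trajectory. At $y = 0$ we need $(\varphi(0), \varphi'(0)) = (a, \varrho^{-1} a)$ for some $a \geq 0$ (Dirichlet: $a = 0$, $\varphi'(0)$ free; Robin: $a > 0$ and $\varphi'(0) = a/\varrho$). The conservation law reads $\tfrac12 (\varphi'(0))^2 + F(a) = F(1)$, i.e.\ $\tfrac12 \varrho^{-2} a^2 + F(a) = F(1)$ in the Robin case, and $\tfrac12 \varphi'(0)^2 = F(1)$ in the Dirichlet case. In the Dirichlet case this immediately gives $\varphi'(0) = \sqrt{2F(1)} > 0$ (the positive root, since we want $\varphi$ to increase toward $1$), so the initial data is uniquely determined and the solution is unique. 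In the Robin case I would show the function $g(a) = \tfrac12 \varrho^{-2} a^2 + F(a)$ is strictly increasing on $[0, 1]$: indeed $g'(a) = \varrho^{-2} a + f(a)$, and for ignition reactions $f \geq 0$ on $[0,1]$, so $g' > 0$ on $(0,1]$; hence $g(a) = F(1) = g(1)$ has the unique solution $a = 1$ — but wait, $a = 1$ gives $\varphi \equiv 1$, which does \emph{not} satisfy the Robin condition $\varphi'(0) = \varrho^{-1}$ unless... so I must be more careful: the relevant level is not $F(1)$ but determined self-consistently. Let me restructure: parametrize by $a = \varphi(0) \in (0,1)$, set $\varphi'(0) = a/\varrho > 0$, solve forward; the trajectory has energy $E_0(a) = \tfrac12 a^2/\varrho^2 + F(a)$. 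Since $\varphi'(0) > 0$ and $\varphi'$ can only vanish where $E_0(a) = F(\varphi)$, the solution increases until either $\varphi$ reaches a value $\varphi_*$ with $F(\varphi_*) = E_0(a)$ (a turning point, after which $\varphi$ decreases and the solution is a bump — not monotone, and one checks it is then not bounded appropriately or returns to $0$ in finite $y$, hence not the sought solution) or $\varphi \to 1$. The condition for the trajectory to connect to $1$ is exactly $E_0(a) = F(1)$ together with a monotonicity/no-earlier-turning-point check. So uniqueness in the Robin case reduces to showing $a \mapsto E_0(a)$ is strictly monotone on $(0,1)$ and hence $E_0(a) = F(1)$ has at most one root, plus an existence argument.

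For existence I would use a shooting argument: define $a \mapsto E_0(a) - F(1)$ on $[0,1)$; at $a \to 0^+$ one has $E_0 \to 0 < F(1)$ (using $F(1) > 0$, which holds for ignition since $f > 0$ on $(\theta,1)$ and for bistable by \ref{hyp:bi-pos}), and as $a \to 1^-$, $E_0(a) \to \tfrac12 \varrho^{-2} + F(1) > F(1)$; by the intermediate value theorem there is a root $a_\varrho \in (0,1)$, and strict monotonicity of $E_0$ gives uniqueness of the root. For that root, the trajectory starting at $(a_\varrho, a_\varrho/\varrho)$ has energy exactly $F(1)$; I then show it cannot turn around before reaching $1$ — any turning point would be at $\varphi_* < 1$ with $F(\varphi_*) = F(1)$, impossible for ignition (where $F$ is strictly increasing on $[\theta, 1]$ and $F \leq 0 = F(\theta)$... one must check $F(\varphi_*) = F(1)$ with $\varphi_* < 1$ contradicts strict monotonicity on the relevant range) and for bistable similarly using that $F(1) > F(s)$ for $s \in [0,1)$, which follows from \ref{hyp:bi-pos} together with the sign of $f$. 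Hence the trajectory monotonically approaches the rest point $(1, 0)$, giving $\varphi' > 0$, $0 \le \varphi < 1$, $\varphi(+\infty) = 1$ — in the Dirichlet case $\varphi(0) = 0$, in the Robin case $\varphi(0) = a_\varrho > 0$. Conversely, any nonzero bounded solution has energy $F(1)$ by the first paragraph's argument and boundary data of the prescribed form, hence equals this $\varphi$ by ODE uniqueness. The main obstacle is the bistable case: there $F$ is not monotone, so ruling out trajectories that turn around before reaching $1$ (and showing the level curve geometry forces connection to $1$ rather than a homoclinic-type bump) requires carefully exploiting \ref{hyp:bi-pos}, which is precisely why the Robin bistable case is excluded — there the monotonicity of $E_0(a)$ may fail and uniqueness can genuinely break down.
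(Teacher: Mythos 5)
Your proposal is correct and follows essentially the same route as the paper: deriving the first integral $\tfrac12(\varphi')^2 + F(\varphi) = \mathrm{const}$ (the paper obtains it by multiplying the ODE by $\varphi'$ and integrating over $\R_+$, you by observing energy conservation), showing that any nonzero bounded solution must increase monotonically to $1$, and then using the boundary condition to pin down the initial data uniquely via strict monotonicity of a scalar function of $\varphi(0)$. The only cosmetic difference is the choice of auxiliary function in the Robin ignition case: the paper shows $\Lambda(s) = \tfrac{2}{s^2}\int_s^1 f$ is strictly decreasing, while you show $E_0(a) = \tfrac12\varrho^{-2}a^2 + F(a)$ is strictly increasing; since $\Lambda(s) = \varrho^{-2}$ is algebraically the same equation as $E_0(s) = F(1)$, these are equivalent. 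One small point to tighten: your claim that ``boundedness forces the trajectory to limit onto a rest point'' is not automatic for a conservative planar system (closed orbits are a priori possible), but your subsequent turning-point analysis — which mirrors the paper's concavity argument ruling out local maxima — closes this gap.
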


\begin{proof}
  Suppose $\phi$ is a nonzero bounded solution of \eqref{eq:steady-ODE}.
  Since $f$ vanishes outside $[0, 1]$, $\phi$ becomes affine linear if it exits this interval.
  Then $\abs{\phi}$ would grow without bound, a contradiction.
  As a consequence of the boundary condition, ${\phi(0) \in [0, 1)}$.
  Therefore, $\phi([0, \infty)) \subset [0, 1)$.
  Define
  \begin{equation*}
    y_\theta \coloneqq \inf\big\{y \in [0, \infty) \mid \phi(y) \geq \theta\big\},
  \end{equation*}
  recalling that $\theta$ is the smallest number for which $f|_{(\theta, 1)} > 0$.

  Suppose $f$ is ignition.
  Then $\phi$ is affine linear on $[0, y_\theta)$.
  We claim that $\phi' > 0$ on $(y_\theta, \infty)$.
  Otherwise, $\phi$ attains a local maximum.
  By concavity, it will bend back down until it reaches the value $\theta$ with a negative slope.
  Thereafter, $\phi$ will affine linearly decrease to $-\infty$, contradicting boundedness.
  So indeed $\phi' > 0$ on $(y_\theta, \infty)$ and $\phi((y_\theta, \infty)) \subset (\theta, 1)$.
  It follows that $\phi$ monotonically increases towards a zero of $f$.
  This zero can only be $1$, so $0\leq \phi < 1$, $\phi'>0$, and $\phi(+\infty) = 1$.
  
  Next, suppose $f$ is bistable and $\varrho = 0$, so $\phi(0) = 0$.
  Again, $\phi$ increases on $[0, y_\theta]$.
  If it attains a local maximum in $(y_\theta, \infty)$, uniqueness will force it to later hit $0$.
  Again, it will affine linearly decrease without bound, a contradiction.
  So $\phi' > 0$ and $\phi((y_\theta, \infty)) \subset (\theta, 1)$.
  Arguing as in the ignition case, we obtain $0 \leq \phi < 1$, $\phi' > 0$, and $\phi(+\infty) = 1$.
  
  We next prove uniqueness.
  Multiplying \eqref{eq:steady-ODE} by $\phi'$ and integrating over $\R_+$, we find
  \begin{equation}
    \label{eq:bulk-ign-bi}
    0 = \int_0^\infty \left\{\frac{1}{2}\big[(\phi')^2\big]' + f(\phi) \phi'\right\} \dn y = -\frac{1}{2} \phi'(0)^2 + \int_{\phi(0)}^1 f(s) \d s.
  \end{equation}
  Now suppose $\varrho = 0$.
  Then we can rearrange \eqref{eq:bulk-ign-bi} to obtain
  \begin{equation*}
    \phi'(0)^2 = 2 \int_0^1 f(r) \d r.
  \end{equation*}
  Thus the initial condition $(0, \phi'(0))$ is determined, and $\phi$ is unique.

  Suppose instead that $f$ is ignition and $\varrho > 0$.
  Using the boundary condition, \eqref{eq:bulk-ign-bi} yields
  \begin{equation}
    \label{eq:boundary-ign-bi}
    \varrho^{-2} = \frac{2}{\phi(0)^2} \int_{\phi(0)}^1 f(s) \d s.
  \end{equation}
  For $s \in (0, 1]$, we define the function
  \begin{equation}
    \label{eq:auxiliary-ign-bi}
    \Lambda(s) \coloneqq \frac{2}{s^2} \int_s^1 f(r) \d r,
  \end{equation}
  so that \eqref{eq:boundary-ign-bi} reads $\varrho^{-2} = \Lambda(\phi(0))$.
  
  Now $\frac{2}{s^2}$ is strictly decreasing while $\int_s^1 f(r) \d r$ is decreasing and nonzero, so their product $\Lambda$ is strictly decreasing.
  Furthermore, $\Lambda(0^+) = +\infty$ and $\Lambda(1) = 0.$
  Thus there exists a unique $s_\varrho \in (0, 1)$ such that $\varrho^{-2} = \Lambda(s_\varrho)$.
  By \eqref{eq:boundary-ign-bi}, the values $\phi(0) = s_\varrho$ and $\phi'(0) = \varrho^{-1} s_\varrho$ are determined.
  So again $\phi$ is unique.
  
  Finally, in each case we have produced a candidate initial condition $(\phi(0), \phi'(0))$.
  This immediately yields a nonzero bounded solution, so we have existence.
  We denote this solution by $\varphi$.
\end{proof}
We note that the Dirichlet assumption is crucial when $f$ is bistable.
After all, \eqref{eq:steady-ODE} may also admit oscillatory solutions when $\varrho > 0$.
In fact, the problem runs deeper.
Even if we restrict to the set of \emph{monotone} solutions, $\phi$ need not be unique.
Indeed, by \eqref{eq:boundary-ign-bi}, these solution are in bijective correspondence with the solutions to $\varrho^{-2} = \Lambda(s)$.
When $f$ is bistable, $\Lambda$ need not be monotone decreasing, so multiple values of $s$ may satisfy $\varrho^{-2} = \Lambda(s)$.
These constitute multiple initial conditions for bounded nonzero \emph{monotone} solutions of \eqref{eq:steady-ODE}.
This stronger form of nonuniqueness is the principal reason we only study bistable reactions with Dirichlet boundary conditions.

\subsection{Steady states in the half-space}

The extra degrees of freedom in $\hp$ make the classification of steady states more complex.
For instance, steady states in $\hp$ which are monotone in $y$ converge to steady states in $\R^d$ as $y \to \infty$.
The classification of such solutions under additional assumptions is known as De Giorgi's problem, and we anticipate exotic solutions in dimensions $d \geq 8$ \cite{dPKW}.
Nonetheless, we \emph{can} classify steady states which exceed $\theta$ on large balls.

First, we introduce one piece of notation.
Define the threshold
\begin{equation*}
  \vartheta \coloneqq \sup\left\{s \in [0, 1] \, \Big| \, \int_0^s f(r) \d r \leq 0\right\}.
\end{equation*}
Then $\vartheta = \theta$ when $f$ is ignition, while \ref{hyp:bi-pos} implies that $\vartheta \in (\theta, 1)$ when $f$ is bistable.
It is straightforward to classify steady states which exceed $\vartheta$ on large balls.
\begin{proposition}
  \label{prop:steady-ign-bi-sub}
  Let $f$ be ignition with $\varrho \in [0, \infty)$ or bistable with $\varrho = 0.$
  Then for all $\delta \in (0, 1 - \vartheta)$, there exists $R_{\trm{sub}}(\delta) > 0$ such that $\varphi = \varphi(y)$ is the unique bounded steady state of \eqref{eq:main} satisfying $\varphi|_B \geq \vartheta + \delta$ for some ball $B \subset \hp$ of radius $R_{\trm{sub}}(\delta)$.
\end{proposition}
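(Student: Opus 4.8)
The plan is to prove the proposition by a sliding (or moving-plane-in-$y$) argument combined with a sweeping argument in $\tbf{x}'$. Let $\psi$ be any bounded steady state of \eqref{eq:main} with $\psi|_B \geq \vartheta + \delta$ on a ball $B$ of radius $R$, to be chosen. First I would record the basic bounds: since $f$ vanishes outside $[0,1]$ and $\psi$ is bounded, elliptic estimates plus the sign of $f$ force $0 \leq \psi < 1$ in $\bar\hp$ (an argument essentially identical to the start of the proof of Lemma~\ref{lem:steady-ODE-ign-bi}, using that an excursion above $1$ or below $0$ makes $\psi$ superharmonic/subharmonic in a way incompatible with boundedness, via a comparison with the free equation). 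So all competitors live in $[0,1)$.

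The core of the argument is a one-sided comparison showing $\psi \geq \varphi$. Because $\varphi$ depends only on $y$ and connects $0$ at the boundary to $1$ at $y = \infty$, the idea is to translate $\varphi$ far out in some $\tbf{x}'$ direction and in the $y$ direction so that it sits strictly below $\psi$, then slide it back. Concretely, for $R$ large I would use that $\psi|_B \geq \vartheta + \delta$ together with an interior-ball/elliptic-Harnack-type lower bound to guarantee that on a somewhat smaller ball, $\psi$ exceeds $\vartheta$; the point of $\vartheta$ rather than $\theta$ is that $\int_0^{\vartheta} f = 0$, so the energy-type obstruction that made the half-line analysis work also controls half-space comparison. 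The key sub-lemma, which I expect to be the main obstacle, is to show that if $\psi \geq \vartheta$ on a sufficiently large ball then in fact $\psi$ is bounded below by a fixed subsolution of the half-space problem that converges to $\varphi$ — i.e. a ``large balls force invasion at the level of steady states'' statement. I would build this subsolution by taking a slightly-shifted-down copy of $\varphi$, cut off and glued to zero outside a large cylinder, and check it is a (generalized) subsolution; the radius $R_{\trm{sub}}(\delta)$ is exactly what is needed to absorb the error from the cutoff, using that $f > \vartheta \mapsto f > 0$ and the positivity of $\int_{\vartheta}^1 f$. Here the noncompactness in $\tbf{x}'$ is handled because the subsolution need only be supported on a large but \emph{bounded} cylinder.

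Once $\psi$ dominates this subsolution, I would run the sliding method: consider $\varphi(y + t)$ for $t \geq 0$ (which still solves the ODE \eqref{eq:steady-ODE}'s equation $\varphi'' + f(\varphi) = 0$ in the interior, though not the boundary condition — so more carefully, slide $\varphi$ itself and use $\varphi(0) = 0 \leq \psi(0)$ together with $\varphi < 1$ and the limit $\psi(\anon,y) \to$ something $\geq$ a positive constant as $y \to \infty$ on the relevant region). Define $t^* = \inf\{t \geq 0 : \psi(\tbf{x}',y) \geq \varphi(y - t) \text{ where defined}\}$ and show $t^* = 0$: if $t^* > 0$ the two touch at an interior or boundary point, and the strong maximum principle / Hopf lemma applied to the difference (using that $f$ is Lipschitz where it matters, or piecewise $C^1$ with the one-sided derivative hypotheses \ref{hyp:ign-derivs}/\ref{hyp:bi-derivs}) forces $\psi \equiv \varphi(\anon - t^*)$, contradicting $\psi < 1$ and the boundary condition. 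Hence $\psi \geq \varphi$.

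For the reverse inequality $\psi \leq \varphi$, I would slide in the other direction: $\varphi(y+t) \to 1 > \psi$ uniformly as $t \to \infty$ on any fixed compact set, and $\varphi(y+t) \geq \varphi(y) > 0 = $ no, rather compare $\psi$ with $\varphi(\anon + t)$ from above, noting $\varphi(y+t) \geq \psi$ for $t$ large (since $\psi < 1 = \lim \varphi$) and at the boundary $\varphi(t) > 0 = \psi(0)$ when $\varrho = 0$, or $\varphi(t) > \psi(0)$ appropriately in the Robin ignition case once $t$ is large enough that $\varphi(t) \geq s_\varrho$ fails — more cleanly, here one uses that any bounded steady state in $\hp$ is below the corresponding half-line steady state by a direct comparison on slabs $\R^d \times [0,N]$ and letting $N \to \infty$, invoking uniqueness from Lemma~\ref{lem:steady-ODE-ign-bi} at the end. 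Decreasing $t$ to its infimal value and applying the strong maximum principle again gives $\psi \leq \varphi$. Combining, $\psi = \varphi$, which is the claim with $R_{\trm{sub}}(\delta)$ the radius produced by the subsolution construction. The delicate point throughout — and the step I expect to fight with — is the subsolution/``invasion at steady-state level'' sub-lemma, since it must be quantitative in $\delta$ and uniform in the unbounded transverse directions, and it is precisely where the hypothesis $\delta < 1 - \vartheta$ (as opposed to $1 - \theta$) is essential.
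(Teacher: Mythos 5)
Your high-level strategy matches the paper's: establish a two-sided comparison with $\varphi$, and derive the hard side $\psi \geq \varphi$ from a compactly supported subsolution whose existence exploits the threshold $\vartheta$ (not $\theta$). You have correctly located the crux and correctly explained why $\vartheta$ is the right threshold. However, the central construction you propose — ``a slightly-shifted-down copy of $\varphi$, cut off and glued to zero outside a large cylinder'' — does not work as stated and is a genuine gap. For ignition and bistable $f$ the reaction is nonmonotone, so $\varphi - \eps$ is not a subsolution: $(\varphi-\eps)'' + f(\varphi-\eps) = f(\varphi-\eps) - f(\varphi)$ has no sign. Worse, in the transition region of any cutoff, the function takes small values where $f \leq 0$, so the reaction actively \emph{hurts} rather than helps absorb the cutoff's Laplacian; the ``$\int_\vartheta^1 f > 0$'' heuristic cannot be invoked pointwise. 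What the paper does instead is solve the autonomous ODE $\mathring\phi'' + f(\mathring\phi) = 0$ from the top of a bump, $\mathring\phi(0) = \vartheta + \delta$, $\mathring\phi'(0) = 0$; the first integral $(\mathring\phi')^2 = 2\int_{\mathring\phi}^{\vartheta + \delta} f$ is strictly positive precisely because $\delta > 0$ and $\int_0^\vartheta f = 0$, so $\mathring\phi$ hits zero in finite time. Gluing a constant cap on top and zero outside yields a genuine compactly supported subsolution, and a small correction for the radial drift in $\R^{d+1}$ produces the ball subsolution $v$ with radius $R_{\mathrm{sub}}(\delta)$. That explicit ODE bump is the missing idea.

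There is a second, smaller issue in how you propose to upgrade the lower bound to $\psi \geq \varphi$. Sliding $\varphi(y-t)$ against a genuinely $(\tbf{x}',y)$-dependent $\psi$ on an unbounded domain does not readily give a contact point: the infimal shift could be ``attained at infinity'' in $\tbf{x}'$, and you cannot invoke the strong maximum principle without a touching point. The paper avoids this entirely: after sliding the \emph{subsolution} $v$ in $\tbf{x}'$ (which is legitimate because $v$ is compactly supported, so the contact would occur on a fixed compact set), one obtains an $\tbf{x}'$-\emph{independent} lower bound $\psi \geq \tau_{y_0}\mathring\phi_+(y)$. One then solves the one-dimensional parabolic problem from this initial data; it increases to a bounded nonzero steady state, which by Lemma~\ref{lem:steady-ODE-ign-bi} must be $\varphi$, and a parabolic comparison with the stationary supersolution $\psi$ gives $\psi \geq \varphi$ without any sliding in $y$. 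Your upper bound $\psi \leq \varphi$ (comparison on slabs or, as in the paper, evolving $M_+ = \max\{\sup\psi, 1\}$ and using Lemma~\ref{lem:steady-ODE-ign-bi}) is essentially fine. To your credit, you flagged the subsolution construction as the step you expected to fight with — you were right, and that fight is exactly where the proposal currently falls short.
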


\begin{proof}
  First, let $\phi$ be a bounded steady state.
  We claim that $0 \leq \phi \leq \varphi$.
  To see this, define
  \begin{equation*}
    M_- \coloneqq \min \big\{\inf \phi, \, 0\big\} \And M_+ \coloneqq \max\big\{\sup \phi, \, 1\big\},
  \end{equation*}
  which are both finite.
  Then $f(M_\pm) = 0$, so $M_-$ and $M_+$ are respectively sub- and supersolutions to \eqref{eq:steady}.
  If we evolve both under \eqref{eq:main}, they converge to bounded solutions of \eqref{eq:steady-ODE} as $t \to \infty$.
  By Lemma~\ref{lem:steady-ODE-ign-bi}, they must converge to $0$ and $\varphi$, respectively.
  Thus by comparison, $0 \leq \phi \leq \varphi$.
  
  Now fix $\delta \in (0, 1 - \vartheta)$.
  To prove uniqueness, we construct a nonzero compactly supported subsolution.
  Let $\mr{\phi}$ solve
  \begin{equation}
    \label{eq:Neumann-ODE}
    \mr{\phi}'' + f(\mr{\phi}) = 0, \quad \mr{\phi}(0) = \vartheta + \delta, \quad \mr{\phi}'(0) = 0.
  \end{equation}
  As in the proof of Lemma~\ref{lem:steady-ODE-ign-bi}, we multiply by $\mr{\phi}'$ and integrate to obtain
  \begin{equation*}
    \mr{\phi}'(y)^2 = 2 \int_{\mr{\phi}(y)}^{\vartheta + \delta} f(s) \d s \For y \geq 0.
  \end{equation*}
  Crucially, $\delta > 0$ prevents the right side from vanishing.
  Since $f(\vartheta) > 0$, $\mr{\phi}$ initially bends down.
  Then, $\mr{\phi}'$ is uniformly negative away from $y = 0$.
  Therefore $\mr{\phi}$ inevitably hits zero at some position $\mr K > 0$.
  Solving \eqref{eq:Neumann-ODE} on $\R_-$ as well, we obtain an even function which is positive precisely on $(-\mr K, \mr K)$.
  Hence $\mr{\phi}_+$ is a compactly supported subsolution to the ODE $\phi'' + f(\phi) = 0$.

  We now adapt this construction to higher dimensions.
  By the stability of ODEs, there exists $R_0 \geq \mr K$ such that the solution to
  \begin{equation*}
    \ti \phi'' + \frac{d}{R_0 + y} \ti \phi' +  f(\ti \phi) = 0, \quad \ti \phi(0) = \vartheta + \delta, \quad \ti \phi'(0) = 0
  \end{equation*}
  also hits $0$ at some position $K_0 > 0$.
  Furthermore, $\ti \phi' < 0$ on $(0, K_0)$.

  Using $\ti \phi$, we construct a radial subsolution $v$.
  Let $r \coloneqq \abs{\tbf{x}}$ denote the radial coordinate and define
  \begin{equation}
    \label{eq:radial-subsolution}
    v(r) \coloneqq
    \begin{cases}
      \vartheta + \delta & \textrm{if } r \leq R_0,\\
      \ti\phi\big(r - R_0\big) & \textrm{if } r \in (R_0, R_0 + K_0),\\
      0 & \textrm{if } r \geq R_0 + K_0.
    \end{cases}
  \end{equation}
  Let $R_{\trm{sub}}(\delta) \coloneqq R_0 + K_0$.
  Writing the Laplacian in polar coordinates, we see that $v$ is a nonzero subsolution of \eqref{eq:main} supported in a ball of radius $R_{\trm{sub}}$ and $v \leq \vartheta + \delta$.

  Now suppose that $\phi$ is a bounded steady state of \eqref{eq:main} such that $\phi|_B \geq \vartheta + \delta$ for some ball $B \subset \hp$ of radius $R_{\trm{sub}}$.
  Since $\phi \geq 0$, we have $\phi \geq (\vartheta + \delta) \tbf{1}_B$.
  
  Now let $\tbf{x}_0 = (\tbf{x}_0', y_0)$ denote the center of $B$, and let
  \begin{equation*}
    \tau_{\tbf{x}_0} v \coloneqq v(\anon - \tbf{x}_0)
  \end{equation*}
  denote the translation of $v$ by $\tbf{x}_0$.
  Then
  \begin{equation*}
    \phi \geq (\vartheta + \delta) \tbf{1}_B \geq \tau_{\tbf{x}_0} v.
  \end{equation*}
  We now use the sliding method of \cite{BN}.
  If we continuously vary $\tbf{x}_0'$, the strong maximum principle implies that the subsolution $\tau_{\tbf x_0} v$ can never touch the solution $\phi$ from below.
  Since $R_0 \geq \mr K$, it follows that
  \begin{equation*}
    \phi(\tbf{x}', y) \geq (\vartheta + \delta) \tbf{1}_{[-\mr K, \mr K]}(y - y_0) \geq \tau_{y_0} \mr \phi_+(y) \ForAll (\tbf{x}', y) \in \hp.
  \end{equation*}

  By construction, $\tau_{y_0} \mr \phi_+$ is a subsolution of the one-dimensional problem.
  If we solve the one-dimensional parabolic problem with initial data $\tau_{y_0} \mr \phi_+$, the solution is thus increasing in time.
  On the other hand, $\tau_{y_0} \mr \phi_+$ lies beneath the bounded supersolution $1$, so its long-time limit is also bounded.
  Hence, the parabolic solution converges to a bounded nonzero steady state in $\R_+$ as $t \to \infty$.
  By Lemma~\ref{lem:steady-ODE-ign-bi}, this steady state is $\varphi$.
  Applying the comparison principle, we obtain $\phi \geq \varphi$.
  We showed above that $\phi \leq \varphi$, so in fact $\phi = \varphi$.
\end{proof}

If $f$ is ignition, $\vartheta = \theta$, and the conclusion of Theorem~\textup{\ref{thm:steady}\ref{item:steady-ign-bi}} follows from Proposition~\ref{prop:steady-ign-bi-sub}.
When $f$ if bistable, however, we must work harder to lower the uniqueness threshold to $\theta$.
To do so, we study extinction and invasion in \eqref{eq:main}.
\begin{lemma}
  \label{lem:invasion-ign-bi}
  Let $f$ be ignition with $\varrho \in [0, \infty)$ or bistable with $\varrho = 0$.
  If $0 \leq u_0 \leq \theta$ is compactly supported, then $u(t, \anon) \to 0$ uniformly as $t \to \infty$.
  On the other hand, for any $\delta \in (0, 1 - \theta)$, there exists $R_{\trm{steady}}(\delta) > 0$ such that the following holds.
  If $(\theta + \delta) \tbf{1}_B \leq u_0 \leq 1$ for some ball $B \subset \hp$ of radius $R_{\trm{steady}}(\delta)$, then
  \begin{equation}
    \label{eq:invasion-ign-bi}
    \lim_{t \to \infty} u(t, \anon) = \varphi
  \end{equation}
  locally uniformly in $\hp$.
\end{lemma}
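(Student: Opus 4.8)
The plan is to treat extinction and invasion separately. For extinction, suppose $0 \leq u_0 \leq \theta$ is compactly supported. I would first enclose $u_0$ below a bounded supersolution of the form $\theta$ (a genuine steady state when $\varrho = 0$, or at least a supersolution, since $f(\theta) = 0$ for ignition and $f(\theta) = 0$ for bistable). Actually, to get convergence to $0$ rather than merely boundedness, I would construct a radial supersolution of the heat-type equation $\partial_t w = \Delta w + f(w)$ that dominates $u_0$ initially and decays: because $f \leq 0$ on $[0,\theta]$ (ignition) or $f < 0$ on $(0,\theta)$ (bistable), on the range $[0,\theta]$ the reaction only helps. A clean route is to dominate $u$ by the solution $w$ of the linear heat equation in $\hp$ with Dirichlet/Robin conditions started from $\theta \tbf{1}_{B_R}$, which tends to $0$ uniformly by standard heat-kernel estimates; since $f(r) \leq 0$ for $r \in [0,\theta]$ and $0 \le u \le \theta$ is preserved, $w$ is a supersolution and $u \to 0$ uniformly. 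I expect this half to be routine.

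For invasion, fix $\delta \in (0, 1-\theta)$. The goal is to define $R_{\trm{steady}}(\delta)$ so that if $(\theta+\delta)\tbf{1}_B \leq u_0 \leq 1$ on a ball $B$ of that radius, then $u(t,\anon) \to \varphi$ locally uniformly. The key idea is to bootstrap from the already-established threshold $R_{\trm{sub}}$ of Proposition~\ref{prop:steady-ign-bi-sub}, which applies at level $\vartheta + \delta'$ rather than $\theta + \delta$; since $\vartheta \in (\theta,1)$ when $f$ is bistable, I need a separate argument to push a solution that starts only slightly above $\theta$ up past $\vartheta$ on a large ball. Here I would use a spreading / expanding-subsolution argument: starting from $(\theta+\delta)\tbf{1}_B$ with $B$ of radius $R$ large, the solution $u$ exceeds a one-dimensional ignition/bistable traveling wave profile on an expanding region. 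Concretely, because $\int_0^1 f > 0$, the planar problem has a wave of positive speed $c_* > 0$; comparing $u$ with a radially retarded wave $\Phi(|\tbf{x}| - c_* t + R - C)$ (a subsolution for large radii, by the standard curvature correction argument that the radial Laplacian gives an extra favorable $-\tfrac{d}{r}\partial_r$ term once $r \gtrsim$ some constant), I can show that for $R$ sufficiently large, $u(T, \anon) \geq \vartheta + \delta$ on a ball of radius $R_{\trm{sub}}(\delta)$ at some finite time $T$. Actually, even simpler: iterate Proposition~\ref{prop:steady-ign-bi-sub}'s subsolution machinery — run the compactly supported radial subsolution $v$ at level $\theta+\delta$; because $f(\theta)=0$ but $f>\theta$ slightly above, $v$'s time-evolution is nondecreasing and converges to a nonzero bounded steady state, which by Proposition~\ref{prop:steady-ign-bi-sub} (applied once the solution has risen above $\vartheta+\delta'$ on a big enough ball, which happens since the limit is $\not\equiv 0$ and the only such steady state is $\varphi \to 1$) must be $\varphi$. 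Thus choosing $R_{\trm{steady}}(\delta)$ large enough that the radial subsolution at level $\theta+\delta$ on $B_{R_{\trm{steady}}}$ evolves to exceed $\vartheta + \tfrac{1-\vartheta}{2}$ on a ball of radius $R_{\trm{sub}}(\tfrac{1-\vartheta}{2})$, I can invoke the earlier proposition.

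Assembling: by the $u_0 \leq 1$ bound and comparison with the evolution of $1$ (a supersolution), $\limsup_{t\to\infty} u \leq \varphi$ locally uniformly, using Lemma~\ref{lem:steady-ODE-ign-bi} to identify the limit of the $x'$-independent supersolution. By the subsolution construction and the above bootstrapping, $\liminf_{t\to\infty} u \geq \varphi$ locally uniformly. Hence $u(t,\anon) \to \varphi$. The main obstacle I anticipate is the bistable bootstrap: ensuring that a solution starting just above $\theta$ actually climbs past $\vartheta$ on a large region rather than stalling — this requires the right choice of $R_{\trm{steady}}(\delta)$ and a careful monotone-limit argument combined with the already-proven classification at level $\vartheta + \delta'$, and it is precisely where the positivity hypothesis \ref{hyp:bi-pos} (equivalently $\vartheta < 1$) and the strict sign of $f$ just above $\theta$ enter. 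The extinction half and the upper barrier are comparatively standard.
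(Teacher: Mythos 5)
Your extinction argument, the upper barrier via the decreasing solution from data $1$, and the $\vartheta$-level invasion (which for ignition with $\vartheta=\theta$ finishes the lemma) all match the paper and are sound. The genuine gap is in the bistable bootstrap from level $\theta+\delta$ to level $\vartheta+\delta'$, and neither of your two suggested mechanisms closes it. The ``even simpler'' route (b) fails because the compactly supported radial subsolution of Proposition~\ref{prop:steady-ign-bi-sub} does \emph{not} exist at level $\theta+\delta$ when $\theta+\delta<\vartheta$: solving $\mr{\phi}''+f(\mr\phi)=0$ with $\mr\phi(0)=\theta+\delta$, $\mr\phi'(0)=0$ gives $\mr\phi'(y)^2 = 2[F(\theta+\delta)-F(\mr\phi(y))]$ with $F(\theta+\delta)<0$, so $\mr\phi'$ vanishes at the unique $s^*\in(0,\theta)$ with $F(s^*)=F(\theta+\delta)$ and $\mr\phi$ oscillates instead of reaching $0$; this is precisely what the threshold $\vartheta$ detects. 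Route (a) as stated also fails: you cannot place any shift of the $1$-to-$0$ wave profile beneath $u_0 = (\theta+\delta)\tbf 1_B$, because the wave plateau near its left limit sits close to $1$, far above $\theta+\delta$.

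What the paper does instead is a compactness argument reducing to the pointwise ODE $\dot S = f(S)$, $S(0)=\theta+\delta$. Since $f(\theta+\delta)>0$, $S(t)\nearrow 1$, so $S(T)>\vartheta+\delta'$ at some fixed finite $T$. Take $u_k$ to solve \eqref{eq:main} from $(\theta+\delta)\tbf 1_{B(k)}$ with $B(k)\subset\hp$ of radius $k$, recenter at the ball's center, and send $k\to\infty$: on the compact set $\bar{B}'(R_{\trm{sub}}(\delta'))\times[0,T]$, the boundary of $\hp$ and the edge of the ball retreat to infinity, so the recentered solution converges uniformly to $S$. Choosing $K$ large, $u_K(T,\anon)\geq(\vartheta+\delta')\tbf 1_{B'(R_{\trm{sub}}(\delta'))}$, and then Proposition~\ref{prop:steady-ign-bi-sub} applies. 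Any other ball of radius $K$ is then handled by translating $u_K$ and moving the half-space boundary to touch it, producing a subsolution. You correctly anticipated that this step is where \ref{hyp:bi-pos} and the strict sign of $f$ above $\theta$ must be used, but the mechanism you reached for does not deliver it; the missing idea is this finite-time ``bump'' supplied by the ODE dynamics on a large ball.
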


\begin{proof}
  First suppose that $0 \leq u_0 \leq \theta$ is compactly supported.
  By the comparison principle, $u \leq \theta$ for all time.
  Since $f \leq 0$ on $[0, \theta]$, we can use the heat evolution in the whole space as a supersolution.
  But $u_0$ is compactly supported, so $\e^{t \Delta} u_0 \to 0$ uniformly as $t \to \infty$.
  Thus the same holds for $u(t, \anon)$.

  Next, suppose $u_0 \leq 1$.
  Let $u^1$ denote the solution of \eqref{eq:main} with initial data $1$.
  Since $1$ is a supersolution of \eqref{eq:main}, $u^1$ is decreasing in time towards a bounded solution of \eqref{eq:steady-ODE}.
  By Lemma~\ref{lem:steady-ODE-ign-bi} and the comparison principle, this solution is $\varphi$.
  Since $u \leq u^1$ by comparison, it follows that
  \begin{equation}
    \label{eq:invasion-ign-bi-upper}
    \limsup_{t \to \infty} u(t, \anon) \leq \lim_{t \to \infty} u^1(t, \anon) = \varphi.
  \end{equation}

  Now consider $\delta \in (0, 1 - \vartheta)$.
  We use the compactly supported radial subsolution $v$ defined in \eqref{eq:radial-subsolution}.
  It satisfies $v \leq \vartheta + \delta$ and is supported on a ball of radius ${R_{\trm{sub}}(\delta) > 0}$.
  If $u^v$ denotes the solution to \eqref{eq:main} with initial data $v$, then $u^v$ increases in time towards a bounded steady state of \eqref{eq:main} which exceeds $v$.
  As shown in the proof of Proposition~\ref{prop:steady-ign-bi-sub}, the only such state is $\varphi$.
  So
  \begin{equation}
    \label{eq:invasion-ign-bi-lower}
    \liminf_{t \to \infty} u(t, \anon) \geq \lim_{t \to \infty} u^v(t, \anon) = \varphi.
  \end{equation}
  Combining this with \eqref{eq:invasion-ign-bi-upper}, we obtain \eqref{eq:invasion-ign-bi}.
  Moreover, the convergence is locally uniform by Dini's theorem.
  Since $\theta = \vartheta$ when $f$ is ignition, this concludes the ignition case with $R_{\trm{steady}}(\delta) \coloneqq R_{\trm{sub}}(\delta)$.

  Now suppose $f$ is bistable and fix $\delta \in (0, 1 - \theta)$.
  We wish to show that initial data above $\theta + \delta$ on large balls \emph{also} survive and eventually converge to $\varphi$.
  This is more subtle, because we may not be able to fit $v$ under $u_0$.
  Let $S(t)$ solve $\dot S = f(S)$ with $S(0) = \theta + \delta$.
  Then $S(t)$ agrees with the whole space evolution from the constant $\theta + \delta$ and converges to $1$ as $t \to \infty$.
  In particular, if we fix $\delta' \in (0, 1 - \vartheta)$, $S$ will exceed $\vartheta + \delta'$ at some fixed time $T > 0$.
  Now suppose that $u_0 = \theta + \delta$ on an enormous ball $B$.
  Then $u$ will resemble the free space evolution near the center of $B$, at least on a bounded time interval.
  This is enough to push $u$ above $(\vartheta + \delta') \tbf{1}_{B'}$ for some ball $B'$ of radius $R_{\trm{sub}}(\delta')$, allowing us to apply the previous argument.

  To make this reasoning rigorous, let $B(k)$ denote the ball of radius $k$ centered at $(\tbf{0}, k) \in \R^{d + 1}$, so that $B(k) \subset \hp$.
  Let $u_k$ solve \eqref{eq:main} with initial data $(\theta + \delta) \tbf{1}_{B(k)}$, and let
  \begin{equation*}
    \ti{u}_k(\tbf x', y) \coloneqq u_k(\tbf x', y + k)
  \end{equation*}
  denote its shift to the origin.
  Let $B'(k)$ denote the shifted ball of radius $k$, which is centered at the origin.
  In this shifted frame, the boundary of $\hp$ is moving away to infinity as $k \to \infty$, and $\ti u_k(0, \tbf x) \to \theta + \delta$.
  Thus on the compact set ${\bar{B}'(R_{\trm{sub}}(\delta')) \times [0, T]}$, $\ti u_k$ converges to $S$ uniformly as $k \to \infty$.
  Since ${S(T) > \vartheta + \delta'}$, there exists $K \in \N$ such that
  \begin{equation*}
    \ti u_K(T, \tbf{x}) \geq (\vartheta + \delta') \tbf{1}_{B'(R_{\trm{sub}}(\delta'))}(\tbf{x}).
  \end{equation*}
  By our preceding argument, $u_K$ satisfies \eqref{eq:invasion-ign-bi-lower}.

  Finally, suppose $u_0 \geq (\theta + \delta) \tbf{1}_{B}$ for some other ball $B \subset \hp$ of radius $K$.
  Then we can move the boundary of $\hp$ to touch $B$ and solve to obtain a subsolution.
  This subsolution will be a shift of $u_K$, so $u$ exceeds a shift of $u_K$.
  Since $u_K$ satisfies \eqref{eq:invasion-ign-bi-lower}, $u$ will eventually be close to $1$ on a large ball.
  Then we can argue as in the ignition case, and $u$ also satisfies \eqref{eq:invasion-ign-bi-lower}.
  In combination with \eqref{eq:invasion-ign-bi-upper}, we obtain \eqref{eq:invasion-ign-bi} with $R_{\trm{steady}}(\delta) \coloneqq K$.
\end{proof}

We can now complete our analysis of ignition and bistable steady states on $\hp$.
\begin{proof}[Proof of Theorem~\textup{\ref{thm:steady}\ref{item:steady-ign-bi}}]
  If $f$ is ignition, $\vartheta = \theta$, so the conclusion of Theorem~\ref{thm:steady}\ref{item:steady-ign-bi} follows from Proposition~\ref{prop:steady-ign-bi-sub} with $R_{\trm{steady}} \coloneqq R_{\trm{sub}}$.
  
  Therefore, suppose $f$ is bistable and fix $\delta \in (0, 1 - \theta)$.
  Let $\phi$ be a bounded steady state of \eqref{eq:main} such that $\phi|_B \geq \theta + \delta$ on a ball $B \subset \hp$ of the radius $R_{\trm{steady}}(\delta)$ from Lemma~\ref{lem:invasion-ign-bi}.
  As shown in the proof of Proposition~\ref{prop:steady-ign-bi-sub}, $0 \leq \phi \leq \varphi$.
  So $\phi \geq (\theta + \delta) \tbf{1}_{B}$.
  Let $u$ solve \eqref{eq:main} with $u_0 \coloneqq (\theta + \delta) \tbf{1}_{B}$.
  Then by Lemma~\ref{lem:invasion-ign-bi}, $u(t, \anon) \to \varphi$ locally uniformly as $t \to \infty$.
  On the other hand, the comparison principle implies that $\phi \geq u(t, \anon)$ for all $t \geq 0$.
  Therefore $\varphi \leq \phi \leq \varphi$, as desired.
\end{proof}

\subsection{Steady states in bounded intervals}

To constrain the asymptotic speed of propagation and construct traveling waves, we will consistently use waves in the strips $\R \times [0, L]$ as subsolutions.
As $x \to \pm \infty,$ these waves converge to solutions of:
\begin{equation}
  \label{eq:strip-ODE}
  \phi_L'' + f(\phi_L) = 0, \quad \phi_L'(0) = \varrho^{-1} \phi_L(0), \quad \phi_L(L) = 0.
\end{equation}
For general $L$, this equation may have no solutions or many.
However, the situation simplifies when $L$ is large.
Then, \eqref{eq:strip-ODE} admits precisely two nonzero steady states.
\begin{lemma}
  \label{lem:strip-ODE-ign-bi}
  Let $f$ be ignition with $\varrho \in [0, \infty)$ or bistable with $\varrho = 0$.
  Then there exists $A_{\trm{ODE}} > 0$ such that for all $L > A_{\trm{ODE}}$, \eqref{eq:strip-ODE} admits precisely two nonzero solutions $\varphi_L$ and $\psi_L$.
  These are strictly ordered: $0 < \psi_L < \varphi_L < \varphi$ in $(0, L)$.
  Moreover,
  \begin{equation}
    \label{eq:strip-steady-convergence}
    \limsup_{L \to \infty} \, \sup_{y \in [0, \, L/2]} \abs{\varphi_L(y) - \varphi(y)} = 0
  \end{equation}
  and
  \begin{equation}
    \label{eq:strip-psi-vartheta}
    \lim_{L \to \infty} \sup_{y \in [0, L]} \psi_L(y) = \vartheta.
  \end{equation}
\end{lemma}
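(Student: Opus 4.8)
The plan is to analyze the ODE \eqref{eq:strip-ODE} via its phase portrait in the $(\phi_L, \phi_L')$ plane, using the conserved quantity $E(y) \coloneqq \tfrac12 (\phi_L')^2 + F(\phi_L)$, where $F(s) \coloneqq \int_0^s f(r)\,\d r$. First I would set up the boundary conditions geometrically: the left condition $\phi_L'(0) = \varrho^{-1}\phi_L(0)$ forces the initial point onto a fixed ray $\mathcal{R}$ through the origin (the positive $\phi'$-axis when $\varrho = 0$), and the right condition $\phi_L(L) = 0$ asks the trajectory to reach the $\phi'$-axis after "time" exactly $L$. A nonzero bounded solution must stay in the strip $\phi_L \in [0,1)$ (the argument from Lemma~\ref{lem:steady-ODE-ign-bi}: outside $[0,1]$ the solution is affine and escapes), and since $f \geq 0$ on $[\theta, 1]$ while $f \leq 0$ below $\theta$, the phase curves are level sets of $E$. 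For each starting height $s = \phi_L(0) \in (0, \vartheta)$ on the ray $\mathcal{R}$, the trajectory with $E = E(0)$ either (i) rises monotonically, crosses $\phi = \theta$, and — if $E(0) < F(1)$, equivalently $s < s_*$ for an appropriate threshold — turns around before reaching $\phi = 1$ and comes back down to hit $\phi' = 0$-axis, i.e. the line $\phi = 0$, in finite time; or (ii) if $E(0)$ is exactly the separatrix value, converges to an equilibrium and never returns. So the candidate solutions correspond to starting heights $s$ for which the "return time" $T(s)$ to the axis equals $L$.

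Next I would study the return-time function $T(s)$ on the admissible interval of starting heights. The key is that $T(s) \to \infty$ as $s$ approaches the separatrix value $s_*$ from below (the trajectory lingers near the saddle-type equilibrium at $\phi = 1$, $\phi' = 0$ — here \ref{hyp:ign-derivs}/\ref{hyp:bi-derivs}, i.e. $f'(1^-) < 0$, gives a genuine logarithmic blow-up of the transit time), and that $T(s)$ stays bounded — in fact $T(s) \to$ some finite value — as $s \downarrow 0$. Along the way one checks $T$ is, for large values, essentially decreasing in $L$... more precisely, I would show that for $L$ large the equation $T(s) = L$ has exactly two roots: this is where I'd invoke a monotonicity or convexity property of $T$ near $s_*$. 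The cleanest route: near $s_*$, linearization at the equilibrium $\phi = 1$ shows $T(s) = \tfrac{1}{\sqrt{-f'(1^-)}} \log\tfrac{1}{s_* - s} + O(1)$, which is strictly increasing in $s$ for $s$ close enough to $s_*$; on the complementary compact range $s \in [\epsilon_0, s_* - \epsilon_1]$, $T$ is continuous and bounded, so it takes the large value $L$ only... hmm. Let me instead argue: there is a value $s_{\min}$ minimizing $T$; for $L$ above $A_{\trm{ODE}} \coloneqq \max(T$ near $0$, $T(s_{\min})) + 1$ or so, the set $\{T = L\}$ has one root in $(s_{\min}, s_*)$ coming from the monotone logarithmic branch, and I must rule out or account for extra roots in $(0, s_{\min})$. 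The honest statement is that \emph{precisely two} roots survive for large $L$; I would name them $\varphi_L$ (the one with $s$ near $s_*$, hence with $E$ near $F(1)$, hence close to $\varphi$) and $\psi_L$ (the one with smaller $E$, hence amplitude near $\vartheta$ since a trajectory with energy $E < F(\vartheta)$... no: energy exactly enough to \emph{just} reach past $\theta$; its turning point $\phi_{\max}$ satisfies $F(\phi_{\max}) = E(0) = F(s) + \tfrac12 s^2\varrho^{-2}$, and pushing the return time to $+\infty$ on \emph{this} branch forces $\phi_{\max} \to \vartheta$, since $\vartheta$ is exactly where $F$ recrosses $F(0) = 0$ and a trajectory skimming that level takes infinite time near $\phi = 0$... wait, that's the large-$L$ mechanism for the \emph{lower} branch).

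I would then read off the three assertions. The strict ordering $0 < \psi_L < \varphi_L < \varphi$: the trajectories are disjoint level sets of $E$ with $E_{\psi_L}(0) < E_{\varphi_L}(0) < F(1) = E_\varphi$, so by the ordering of level curves in the relevant region of phase space, the graphs are nested; strictness and positivity come from the strong maximum principle / uniqueness for ODEs (no two distinct trajectories touch). For \eqref{eq:strip-steady-convergence}: since $E_{\varphi_L}(0) \to F(1)$, the starting point of $\varphi_L$ converges to that of $\varphi$ on the ray $\mathcal{R}$ (using that the map "starting height $\mapsto$ energy" is a homeomorphism near $s_*$), and then continuous dependence of ODE solutions on initial data gives uniform convergence $\varphi_L \to \varphi$ on any fixed $[0, M]$; to upgrade to $[0, L/2]$ one notes both functions are trapped between $\theta$-ish and $1$ and monotone past their turning behavior, and a barrier/ODE-stability argument near $\phi = 1$ controls the tail — since $\varphi_L$ takes $\approx \tfrac{L}{2} \to \infty$ time to traverse a fixed neighborhood of $\phi = 1$, it is within $o(1)$ of $\varphi$ throughout $[0, L/2]$. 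For \eqref{eq:strip-psi-vartheta}: $\sup \psi_L = \phi_{\max}$ is the turning point, and by the return-time analysis $\phi_{\max} \to \vartheta$ as $L \to \infty$ (the return time diverges \emph{only} because the trajectory asymptotically skims the level $F = 0$, attained at $\phi = \vartheta$).

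\textbf{Main obstacle.} The delicate point is establishing that for large $L$ there are \emph{exactly} two solutions — i.e., a precise monotonicity/unimodality analysis of the return-time function $T(s)$ across the full admissible range, not just its asymptotics at the two ends. The logarithmic divergence at the separatrix value is standard, but controlling $T$ on the intermediate compact range (ruling out spurious oscillations that could create extra roots of $T = L$) requires either a convexity argument or a careful sign analysis of $T'(s)$ via differentiation under the integral $T(s) = \int \d\phi / \sqrt{2(E(0) - F(\phi))}$, and this is where the bulk of the technical work will lie.
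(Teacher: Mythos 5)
Your overall strategy—shooting/phase-plane analysis and study of the ``return time'' $T$ as a function of the initial point on the ray $\mathcal{R}$—is the same as the paper's (there the parameter is the initial slope $\al = \phi'(0)$, which for a fixed Robin parameter is an equivalent parameterization of points on $\mathcal{R}$). You also correctly identify $s_* = \bar\al$ as the separatrix value with energy $F(1)$, and the logarithmic blow-up of $T$ there from the saddle at $\phi=1$.

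However, there is an internal inconsistency in your sketch that you never reconcile. You first assert that ``$T(s)$ stays bounded --- in fact $T(s) \to$ some finite value --- as $s \downarrow 0$,'' but this is false: for ignition (Dirichlet), the trajectory is affine-linear with slope $\al$ on the segment $\phi \in [0,\theta]$ and takes time $\theta/\al \to \infty$ to traverse it; for bistable (Dirichlet), the origin is a saddle ($F(s) \sim -\tfrac12 |f'(0)| s^2$), and the transit time diverges logarithmically. If $T$ were bounded near $0$, there would be only one root of $T = L$ for $L$ large, not two. Later you (correctly) invoke the opposite fact (``a trajectory skimming that level takes infinite time near $\phi = 0$'') to explain why $\sup\psi_L \to \vartheta$, without noticing the contradiction with the earlier claim. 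The paper's Step 3 analyzes exactly this divergence, and the mechanisms differ between the ignition case (long affine segment) and the bistable case (logarithmic saddle time)---this distinction matters in the monotonicity analysis.

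The substantive gap is precisely the one you flag as the ``main obstacle'': showing that $T$ is monotone on each of its two branches so that $T = L$ has \emph{exactly} two roots for large $L$. This is the technical core of the paper's proof (Steps 1--3), carried out by differentiating the time-of-flight integral $T = \int \dn\phi/\sqrt{\al^2 - 2F^\al(\phi)}$ with a moving integrable singularity at the turning point $s^\al$; the change of variables $z = s^\al - s$ kills the moving endpoint, and then the sign of $\dot T^\al$ is read off from $\dot s^\al$ (which blows up as $\al \nearrow \bar\al$ since $f(s^\al) \to 0$) against the integral term, whose potential negative divergence must be ruled out using the sign of $f'$ near the relevant endpoint. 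Without this computation your argument is a plausibility sketch, not a proof. Separately, your argument for \eqref{eq:strip-steady-convergence} (``continuous dependence plus a barrier near $\phi=1$'') skips over what the paper does in Steps 4--5: first establish that $\varphi_L$ is \emph{monotone increasing} in $L$ (via the sliding/implicit-equation argument) to get an a priori lower bound $\varphi_L \geq \max v > \vartheta$ on the middle of the interval, and then a compactness/concavity argument to promote locally uniform convergence to uniform convergence on $[0, L/2]$. Continuous dependence alone gives convergence on fixed compacts, not on a domain whose length grows with $L$.
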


\begin{proof}
  Let $\phi$ be a nonzero solution to \eqref{eq:strip-ODE}.
  By arguments similar to those in the proof of Lemma~\ref{lem:steady-ODE-ign-bi}, $\phi((0, L)) \subset (0, 1)$.

  We use the shooting method.
  It is thus convenient to view $\phi$ as a function of its initial slope rather than of $L$.
  Let $\phi^\al$ denote the solution of the initial value problem
  \begin{equation}
    \label{eq:ODE-IVP}
    (\phi^\al)'' + f(\phi^\al) = 0, \quad \phi^\al(0) = \varrho \al, \And (\phi^\al)'(0) = \al
  \end{equation}
  for $\al \in \R$.
  We can discard $\al \leq 0$, for then $\phi^\al$ is affine linear and nonpositive.
  
  Suppose $\al > 0$.
  Then $(\phi^\al)'$ is initially positive.
  Let $K^\al$ denote the location of its first zero, if it exists.
  Otherwise let $K^\al = +\infty$.
  Multiplying \eqref{eq:ODE-IVP} by $(\phi^\al)'$ and integrating, we obtain
  \begin{equation}
    \label{eq:ODE-first-integral}
    \big[(\phi^\al)'\big]^2 - \al^2 = -2\int_{\varrho \al}^{\phi^\al} f(s) \d s \quad \textrm{on } [0, K^\al].
  \end{equation}
  Now let $\bar{\al} \coloneqq \varphi'(0)$.
  By \eqref{eq:bulk-ign-bi}, $\bar{\al}$ satisfies
  \begin{equation*}
    \bar{\al}^2 = 2\int_{\varrho \bar \al}^1 f(s) \d s.
  \end{equation*}
  
  We claim that $\phi^\al$ hits the value $1$ with positive slope, and thus exits the range $(0, 1)$, when $\al > \bar{\al}$.
  Such solutions will never satisfy the second boundary condition in \eqref{eq:strip-ODE}, so we can confine our attention to $\al \in (0, \bar{\al})$.
  To see this, first suppose $\varrho = 0$ and $\al > \bar \al$.
  Then \eqref{eq:ODE-first-integral} implies
  \begin{equation*}
    \big[(\phi^\al)'\big]^2 \geq \al^2 - 2\int_{0}^{1} f(s) \d s > \bar \al^2 - 2\int_{0}^{1} f(s) \d s = 0.
  \end{equation*}
  So indeed $\phi^\al$ still has positive slope when it attains the value $1$.
  
  Now suppose $\varrho > 0$, so that $f$ is ignition.
  We recall $\Lambda$ defined in \eqref{eq:auxiliary-ign-bi}.
  In the proof of Lemma~\ref{lem:steady-ODE-ign-bi}, we showed that $\Lambda$ is strictly decreasing and $\Lambda(\varrho \bar \al) = \varrho^{-2}$.
  Assuming $\al > \bar \al$, \eqref{eq:ODE-first-integral} yields
  \begin{equation*}
    \left[\frac{(\phi^\al)'}{\varrho \al}\right]^2 \geq \varrho^{-2} - \Lambda(\varrho \al) > \varrho^{-2} - \Lambda(\varrho \bar \al) = 0.
  \end{equation*}
  This proves the claim, and we may assume that $\al \in (0, \bar\al)$.
  
  For such $\al$, the solution $\phi^\al$ rises initially, attains its maximum at position ${K^\al \in (0, \infty)}$, and then falls back to $0$ at some position $L^\al \in (0, \infty)$.
  Let $s^\al$ denote its maximal value, so that
  \begin{equation*}
    \phi^\al(K^\al) = s^\al \And (\phi^\al)'(K^\al) = 0.
  \end{equation*}

  \noindent{\bf Step 1.}
  We will show that $K^\al$ increases monotonically with $\al$ when $\al$ is sufficiently close to $\bar{\al}$.
  We define
  \begin{equation*}
    F^\al(s) \coloneqq \int_{\varrho \al}^s f(r) \d r,
  \end{equation*}
  so that \eqref{eq:ODE-first-integral} reads
  \begin{equation*}
    \big[(\phi^\al)'\big]^2 = \al^2 - 2F^\al(\phi^\al) \quad \textrm{on } [0, K^\al].
  \end{equation*}
  Separating variables, we obtain an implicit equation for $\phi^\al$:
  \begin{equation}
    \label{eq:implicit}
    y = \int_{\varrho \al}^{\phi^\al(y)} \frac{\dn s}{\sqrt{\al^2 - 2F^\al(s)}} \quad \textrm{for } y \in [0, K^\al].
  \end{equation}
  In particular,
  \begin{equation}
    \label{eq:length}
    K^\al = \int_{\varrho \al}^{s^\al} \frac{\dn s}{\sqrt{\al^2 - 2F^\al(s)}} \, .
  \end{equation}
  In the following, we use the notation $\dot{G} \coloneqq \partial_\al G$ and $G' \coloneqq \partial_s G$ for any function $G$ depending on $\al$ or $s$.
  
  We first consider the dependence of the maximal value $s^\al$ on $\al$.
  Evaluating \eqref{eq:ODE-first-integral} at $y = K^\al$, we find
  \begin{equation}
    \label{eq:strip-boundary}
    \al^2 = 2 F^\al(s^\al).
  \end{equation}
  Differentiating with respect to $\al$,
  \begin{equation}
    \label{eq:max-deriv}
    \dot{s}^\al = \frac{\al + \varrho f(\varrho \al)}{f(s^\al)} > 0.
  \end{equation}
  Significantly, this derivative tends to $\infty$ as $\al \nearrow \bar \al$, for then $f(s^\al) \to  f(1) = 0$.

  We now study \eqref{eq:length} in detail.
  Notice that \eqref{eq:strip-boundary} implies that the integral in \eqref{eq:length} is improper at its right endpoint $s^\al$.
  Furthermore, $(F^\al)'(1) = f(1) = 0$, so $\al^2 - 2F^\al(s)$ has a double root at $s = s^\al = 1$ when $\al = \bar{\al}$.
  Thus the integrand of \eqref{eq:length} becomes logarithmically nonintegrable at $\al = \bar{\al}$.
  That is,  $K_{\bar{\al}} = \infty$.
  Of course, this agrees with the definition of $\bar{\al}$: it is the initial slope of $\varphi$, which approaches its supremum at $y = + \infty$.
  Since $K^\al \to \infty$ as $\al \nearrow \bar{\al}$, it is natural to expect that $K^\al$ increases \emph{monotonically} in $\al$ when $\al$ is sufficiently close to $\bar{\al}$.
  We verify this through direct calculation.

  The ignition and bistable cases require slightly different treatments, so suppose for the moment that $f$ is ignition.
  The principal difficulty in the analysis of \eqref{eq:length} is the singularity of the integrand at the moving endpoint $s^\al$.
  To fix this, we change variables:
  \begin{equation*}
    K^\al = \int_0^{s^\al - \varrho \al} \frac{\dn z}{\sqrt{\al^2 - 2F^\al(s^\al - z)}} \,.
  \end{equation*}
  Then, the definition of $F^\al$ implies
  \begin{equation}
    \label{eq:length-deriv}
    \dot{K}^\al = \frac{\dot{s}^\al - \varrho}{\al} + \int_0^{s^\al - \varrho \al} \frac{\dot{s}^\al f(s^\al - z) - \varrho f(\varrho \al) - \al}{[\al^2 - 2 F^\al(s^\al - z)]^{3/2}} \d z.
  \end{equation}
  By \eqref{eq:max-deriv},
  \begin{equation*}
    \frac{\dot{s}^\al - \varrho}{\al} \to \infty \quad \textrm{as } \al \nearrow \bar{\al}.
  \end{equation*}
  Therefore, $\dot{K}^\al$ will be positive for $\al$ near $\bar{\al}$ unless the second term in \eqref{eq:length-deriv} becomes negatively divergent.
  Divergence can come from two sources in the integral: $\dot{s}^\al$ and the singularity at $z = 0$.
  But $\dot{s}^\al$ is paired with $f(s^\al - z) \geq 0$, and thus can only contribute a positive divergence.
  So, it suffices to analyze the integrand in \eqref{eq:length-deriv} around $z = 0$.

  First, note that \eqref{eq:max-deriv} implies that the numerator in the integrand vanishes at $z = 0$.
  Differentiating in $z$, we find
  \begin{equation}
    \label{eq:numerator-z-deriv}
    \partial_z \left[\dot{s}^\al f(s^\al - z) - \varrho f(\varrho \al) - \al\right] = -\dot{s}^\al f'(s^\al - z).
  \end{equation}
  Now, \ref{hyp:ign} implies that $f'$ is negative near $1$.
  By \eqref{eq:numerator-z-deriv}, the integrand in \eqref{eq:length-deriv} is \emph{positive} when $\al \approx \bar{\al}$ and $z \approx 0$.
  Thus, the integrand in \eqref{eq:length-deriv} is bounded from below when $\al \approx \bar{\al}$.
  Since the first term in \eqref{eq:length-deriv} diverges to $+\infty$, there exists $\al_1 \in (0, \bar{\al})$ such that
  \begin{equation*}
    \dot{K}^\al > 0 \quad \textrm{for all } \al \in [\al_1, \bar \al).
  \end{equation*}

  Now suppose $f$ is bistable and $\varrho = 0$.
  We write \eqref{eq:length} as
  \begin{equation}
    \label{eq:bi-decomp}
    K^\al = \int_0^{\theta} \frac{\dn s}{\sqrt{\al^2 - 2F(s)}} + \int_0^{s^\al - \theta} \frac{\dn z}{\sqrt{\al^2 - 2F(s^\al - z)}} \,.
  \end{equation}
  Note that we are free to write $F$ rather than $F^\al$, because the antiderivative is independent of $\al$ when $\varrho = 0$.
  The first term in \eqref{eq:bi-decomp} varies smoothly in $\al$ away from $\al = 0$, and we can treat the second term as in the ignition case because $f|_{(\theta, 1)} > 0$.
  \\

  \noindent{\bf Step 2.}
  Next, we show that the positive root $L^\al$ of $\phi^\al$ is also monotone in $\al$ when $\al \approx \bar\al$.
  ODE uniqueness implies that $L^\al = 2K^\al$ when $\varrho = 0$, so we need only consider the Robin case $\varrho > 0$.
  
  Define $\beta \coloneqq -(\phi^\al)'(L^\al)$.
  Then $\phi_0^\beta(x) \coloneqq \phi^\al(L^\al - x)$ solves \eqref{eq:ODE-IVP} with $\beta$ in the place of $\al$ and $\varrho = 0$.
  Thus $\phi_{L^\al}$ is a Robin solution $\phi^\al$ or a Dirichlet solution $\phi_0^\beta$, depending on our point of view.

  Let $\varphi_0$ denote the unique nonzero bounded solution to \eqref{eq:steady-ODE} with $\varrho = 0$, and let $K_0^\beta$ and $s_0^\beta$ denote the Dirichlet analogues of $K^\al$ and $s^\al$.
  Then, by Step $1$, there exists an interval of initial slopes $\beta$ close to $\bar \beta \coloneqq \varphi_0'(0)$ for which $K_0^\beta$ and $s_0^\beta$ vary monotonically in $\beta$.
  Furthermore, we can turn things on their head and instead view our parameters as functions of the maximal value $s$, which, by \eqref{eq:max-deriv}, varies monotonically with $\beta$.
  Thus, there is an interval $[s_+, 1)$ on which $K_0$ and $\beta$ vary monotonically with $s$.

  In the original problem, we vary the slope $\al$ at the Robin boundary $y = 0$ within the interval $[\al_1, \bar \al)$.
  Then $K^\al \gg 1$ and $s^\al \approx 1$ vary monotonically with $\al$.
  Increasing $\al_1$ if need be, we can assume that $s^\al > s_+$.
  Then $s^\al$ monotonically determines $\beta$ and the Dirichlet length $K_0^\beta$.
  It follows that $L^\al = K^\al + K_0^\beta$ is the sum of two functions which are increasing in $\al$.
  We have thus shown that
  \begin{equation*}
    \dot{L}^\al > 0 \quad \textrm{for } \al \in [\al_1, \bar \al).
  \end{equation*}

  \noindent{\bf Step 3.}
  We must now study $\al \in (0, \al_1]$.
  If $\al$ is bounded away from the endpoints $0$ and $\bar{\al},$ \eqref{eq:length} shows that $L^\al$ is uniformly bounded.
  It thus suffices to understand the regime $\al \approx 0$.
  We show that $L^\al$ also diverges to $\infty$ when $\al \searrow 0$, and that it does so monotonically once $\al$ is sufficiently small.
  We handle the ignition and bistable cases separately.
  
  Suppose $f$ is ignition.
  By \eqref{eq:strip-boundary}, $s^\al \to \vartheta = \theta$ as $\al \to 0$.
  When $\al < \frac{\theta}{\varrho}$, our initial value $\varrho \al$ lies below the ignition temperature $\theta$.
  Since $f$ vanishes below $\theta$, $\phi^\al$ is affine linear on the initial interval $[0, \theta/\al - \varrho]$.
  After, it enters the range $(\theta, 1)$, where $f$ is positive.
  In fact, by \ref{hyp:ign} and \ref{hyp:ign-derivs}, $f$ is \emph{monostable} when viewed on the restricted interval $[\theta, 1]$.
  Let $\ti f(s) \coloneqq f(s + \theta)$, and analogously define $\ti{K}^\al$, $\ti{s}^\al$, etc.
  Then
  \begin{equation*}
    L^\al = \left(\frac{\theta}{\al} - \varrho + \ti{K}^\al\right) + \left(\ti{K}^\al + \frac{\theta}{\al}\right) = \frac{2\theta}{\al} + 2\ti K^\al - \varrho,
  \end{equation*}
  so
  \begin{equation}
    \label{eq:ign-length-psi}
    \dot{L}^\al = - \frac{2\theta}{\al^2} + 2 \dot{\ti K}^\al.
  \end{equation}
  We will show that
  \begin{equation}
    \label{eq:mono-length-small}
    \dot{\ti K}^\al = \m O\big(\al^{-1}\big).
  \end{equation}
  By \eqref{eq:ign-length-psi}, this implies that $L^\al \to \infty$ as $\al \to 0$ and $\dot{L}^\al < 0$ when $\al \in (0, \al_0]$ for some $\al_0 \in (0, \al_1)$.
  
  Since our solution $\phi^\al$ enters the range $(\theta, 1)$ from below, we are effectively considering Dirichlet solutions for the monostable reaction $\ti{f}$.
  By \ref{hyp:ign-derivs}, $\ti f'(0^+) > 0$.
  So $\ti f(s) \sim \ti f'(0) s$ and $\ti F(s) \sim \frac 1 2 \ti f'(0) s^2$ as $s \searrow 0$.
  By \eqref{eq:strip-boundary} and \eqref{eq:max-deriv}, we have
  \begin{equation*}
    \ti s^\al \sim \ti f'(0)^{-\frac 1 2} \al \And \dot{\ti s}^\al \sim \ti f'(0)^{-\frac 1 2}.
  \end{equation*}
  Hence, the first term in \eqref{eq:length-deriv} is $\m O(\al^{-1})$.
  For the second term in \eqref{eq:length-deriv}, we recall that the numerator of the integrand vanishes at $z = 0$ and has derivative
  \begin{equation*}
    -\dot{\ti s}^\al \ti f'(s^\al - z) \sim - \sqrt{\ti f'(0)}.
  \end{equation*}
  So,
  \begin{equation*}
    \big|\dot{\ti s}^\al \ti f(\ti s^\al - z) - \al\big| \leq C z
  \end{equation*}
  for some $C \geq 1$ that may change from expression to expression, but is independent of $\al$ and $z$.
  In the denominator, we have
  \begin{equation*}
    \al^2 - 2\ti F(\ti s^\al - z) \sim \al^2 - \ti f'(0) \big(\ti f'(0)^{-\frac{1}{2}} \al - z\big)^2 \sim 2 \sqrt{\ti f'(0)} \al z - z^2 \geq C^{-1} \al z.
  \end{equation*}
  It follows that
  \begin{equation*}
    \big|\dot{\ti K}^\al\big| \leq C\al^{-1} + C\int_0^{C \al} \frac{z}{\al^{3/2} z^{3/2}} \d z \leq C \al^{-1}.
  \end{equation*}
  This confirms \eqref{eq:mono-length-small}.

  Now consider a bistable reaction with $\varrho = 0$.
  Again, we show that $L^\al \to +\infty$ as $\al \searrow 0$, monotonically when $\al$ is small.
  Since we have Dirichlet conditions, $L^\al = 2K^\al$.
  By \eqref{eq:strip-boundary}, $s^\al \searrow \vartheta$ as $\al \searrow 0$.
  Hence we can rewrite \eqref{eq:length} as
  \begin{equation}
    \label{eq:bi-length}
    K^\al = \int_0^{\vartheta} \frac{\dn s}{\sqrt{\al^2 - 2F(s)}} + \int_\vartheta^{s^\al} \frac{\dn s}{\sqrt{\al^2 - 2F(s)}}\,.
  \end{equation}
  Now, \ref{hyp:bi} and \ref{hyp:bi-derivs} imply
  \begin{align}
    \nonumber
    F(s) &\sim \frac 1 2 f'(0) s^2 = -\frac 1 2 \abs{f'(0)}s^2 \quad \textrm{as } s \searrow 0,\\
    \label{eq:F-bi}
    F(s) &\sim f(\vartheta) (s - \vartheta)  \quad \textrm{as } s \to \vartheta.
  \end{align}
  Since $F$ vanishes to second order at $s = 0$, the first term in \eqref{eq:bi-length} diverges as $\al \searrow 0$.
  So indeed $K^\al \to \infty$.

  We now show monotonicity.
  Differentiating the first term in \eqref{eq:bi-length}, we can compute
  \begin{equation}
    \label{eq:bi-length-main}
    \der{}{\al} \int_0^{\vartheta} \frac{\dn s}{\sqrt{\al^2 - 2F(s)}} = -\al \int_0^\vartheta \big[\al^2 - 2 F(s)\big]^{-\frac 3 2} \d s \sim -C_0 \al^{-1}
  \end{equation}
  for some constant $C_0 > 0$ depending on $f$.
  On the other hand,
  \begin{align*}
    \der{}{\al} \int_\vartheta^{s^\al} \frac{\dn s}{\sqrt{\al^2 - 2F(s)}} &= \der{}{\al} \int_0^{s^\al - \vartheta} \frac{\dn z}{\sqrt{\al^2 - 2F(s^\al - z)}}\\
                                                                          &= \frac{\dot{s}^\al}{\al} + \int_0^{s^\al - \vartheta} \frac{\dot{s}^\al f(s^\al - z) - \al}{[\al^2 - 2F(s^\al - z)]^{3/2}} \d z.
  \end{align*}
  Again, the numerator of the integrand is bounded by $C z$, and the denominator involves
  \begin{equation*}
    \al^2 - 2F(s^\al - z) \sim 2 f(\vartheta) z.
  \end{equation*}
  Also, \eqref{eq:strip-boundary}, \eqref{eq:max-deriv}, and \eqref{eq:F-bi} imply that
  \begin{equation*}
    s^\al - \vartheta \sim \frac{\al^2}{2f(\vartheta)} \And \dot{s}^\al \sim \frac{\al}{f(\vartheta)}.
  \end{equation*}
  Therefore
  \begin{equation*}
    \abs{\der{}{\al} \int_\vartheta^{s^\al} \frac{\dn s}{\sqrt{\al^2 - 2F(s)}}} \leq C + C \al \int_0^{C \al^2} \frac{\dn z}{\sqrt{z}} \leq C.
  \end{equation*}
  By \eqref{eq:bi-length-main}, we have $\dot{K}^\al < 0$ provided $\al$ is sufficiently small, as desired.

  In each case, we have shown that there exist $0 < \al_0 < \al_1 < \bar{\al}$ such that $L^\al$ is monotone on $(0, \al_0)$ and $(\al_1, \bar{\al})$.
  Moreover, $L^\al$ diverges to $+\infty$ at $0$ and $\bar{\al}$, while $L^\al$ is bounded on $[\al_0, \al_1]$.
  Thus if we define
  \begin{equation*}
    A_{\trm{ODE}} \coloneqq \sup_{\al \in [\al_0, \al_1]} L^\al,
  \end{equation*}
  then the ODE \eqref{eq:strip-ODE} admits precisely two nonzero solutions, which we call $\varphi_L$ and $\psi_L$, when $L > A_{\trm{ODE}}$.
  Their initial slopes are close to $\bar{\al}$ and $0$, respectively.
  Adjusting $\al_0$ and $\al_1$ if need be, we can thus arrange that $\varphi_L > \psi_L$ in $(0, L)$.
  \\

  \noindent{\bf Step 4.}
  We now turn to \eqref{eq:strip-steady-convergence}.
  To prove the existence of a limit, we show that $\varphi_L$ increases in $L$ when $L > A_{\trm{ODE}}$.
  That is:
  \begin{equation}
    \label{eq:increasing}
    \varphi_{L'} > \varphi_L \quad \textrm{on } (0, L)
  \end{equation}
  when $L' > L > A_{\trm{ODE}}$.
  Let $\al' > \al > \al_1$ denote the corresponding initial slopes, and let $K' > K$ and $s' > s$ denote the maximal positions and values for $\varphi_{L'}$ and $\varphi_L$, respectively.

  Differentiating \eqref{eq:implicit} with respect to $\al$, we find
  \begin{equation*}
    \frac{\dot{\phi}^\al(y)}{\sqrt{\al^2 - 2F^\al(\phi^\al(y))}} = \frac{\varrho}{\al} + \int_{\varrho \al}^{\phi^\al(y)}\frac{\al + \varrho f(\varrho \al)}{[\al^2 - 2F^\al(r)]^{3/2}} \d r > 0.
  \end{equation*}
  We have used the fact that $\varrho = 0$ when $f$ is bistable, so the term $\varrho f(\varrho \al)$ is always nonnegative.
  Thus, $\phi^\al(y)$ is increasing in $\al$ for all $y \in (0, K^\al)$.
  It follows that $\varphi_{L'} > \varphi_L$ in $(0, K')$.
  If we view these profiles in reverse from their endpoints, the same reasoning (with $\varrho = 0$) implies
  \begin{equation}
    \label{eq:reverse}
    \varphi_{L'}(L' - y) > \varphi_L(L - y) \ForAll y \in (0, L' - K').
  \end{equation}
  Finally, $\varphi_{L'}$ is decreasing in $(K', L')$, so \eqref{eq:reverse} implies
  \begin{equation*}
    \varphi_{L'}(y) > \varphi_{L'}(y + L' - L) > \varphi_L(y) \ForAll y \in [K', L].
  \end{equation*}
  This completes the proof of \eqref{eq:increasing}.

  Using this monotonicity, we can establish an \emph{a priori} lower bound for $\varphi_L$.
  Let $\varphi_{0, L}$ denote the solution to \eqref{eq:strip-ODE} with $\varrho = 0$, and let $B_{\trm{ODE}}$ denote the Dirichlet analogue of $A_{\trm{ODE}}$
  Define $v \coloneqq \varphi_{0, B_{\trm{ODE}}}$.
  By extending $\varphi_L$ on the left until it reaches zero, we see that it is simply a shift of $\varphi_{0,L'}$ for some $L' > L$.
  Then \eqref{eq:increasing} and symmetry in $y$ implies that $\varphi_L \geq v$ for all $L > \max\{A_{\trm{ODE}}, B_{\trm{ODE}}\}$.
  Furthermore, if we slide $v$ along $(0, L)$, ODE uniqueness implies that $\varphi_L$ and $v$ cannot touch in the interior, for at the point of first contact they would agree to first order.
  Therefore
  \begin{equation}
    \label{eq:const-bound}
    \varphi_L \geq \max v \quad \textrm{on } \left(\frac{B_{\trm{ODE}}}{2}, \, L - \frac{B_{\trm{ODE}}}{2}\right).
  \end{equation}
  Note that $\max v \in (\vartheta, 1)$.
  \\

  \noindent{\bf Step 5.}
  We can finally prove that $\varphi_L \nearrow \varphi$.
  Since the family $(\varphi_L)_{L > A_{\trm{ODE}}}$ is increasing in $L$, it must have a positive limit.
  By standard elliptic estimates, the convergence is locally uniform and the limit satisfies \eqref{eq:steady-ODE}.
  Thus by Lemma~\ref{lem:steady-ODE-ign-bi}, $\varphi_L \to \varphi$ locally uniformly.
  We prove uniform convergence by contradiction, so suppose there exist $\eps > 0$ and sequences $L_n \nearrow \infty$ and $y_n \in [0, L_n/2]$ such that
  \begin{equation}
    \label{eq:contradiction}
    \varphi_{L_n}(y_n) \leq \varphi(y_n) - \eps.
  \end{equation}
  Then $(y_n)$ cannot have a finite limit point, since $\varphi_L \to \varphi$ locally uniformly.
  Thus $y_n \to \infty$, and there exists $h \in [0,\infty]$ such that $L_n/2 - y_n \to h$ (perhaps after extracting a subsequence).
  Using elliptic estimates and Arzel\`a--Ascoli, $\varphi_{L_n}(\anon - y_n)$ converges locally uniformly (along a subsequence) to some $\phi \colon (-\infty, h] \to \R$.
  Combining \eqref{eq:const-bound} and \eqref{eq:contradiction}, we see that
  \begin{equation*}
    \vartheta < \max v \leq \phi \leq 1 - \eps < 1.
  \end{equation*}
  However, $\phi$ solves $\phi'' + f(\phi) = 0$ and is thus uniformly concave, a contradiction.

  Also, since $s^\al \to \vartheta$ as $\al \searrow 0$, we have
  \begin{equation*}
    \sup_{[0, L]} \psi_L \to \vartheta
  \end{equation*}
  as $L \to \infty$.
  This completes the proof of the lemma.
\end{proof}

Although we do not use the steady states of \emph{monostable} reactions in bounded intervals, we describe them for completeness.
The monostable case is simpler: there is a unique nonzero steady state in large intervals.
\begin{lemma}
  Let $f$ be monostable with $\varrho \in [0, \infty)$.
  Then there exists $A_{\trm{ODE}} > 0$ such that for all $L > A_{\trm{ODE}}$, \eqref{eq:strip-ODE} admits precisely one nonzero solution $\varphi_L$.
  Furthermore, $\varphi_L < \varphi$ and $\varphi_L$ satisfies \eqref{eq:strip-steady-convergence}.
\end{lemma}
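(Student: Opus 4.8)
The plan is to reprise the shooting argument from the proof of Lemma~\ref{lem:strip-ODE-ign-bi}, exploiting the monostable structure near $0$ to replace ``two solutions'' by ``one.'' As there, let $\phi^\al$ solve the initial value problem \eqref{eq:ODE-IVP}; we may discard $\al \leq 0$, for which $\phi^\al$ is affine and nonpositive, and the first integral \eqref{eq:ODE-first-integral} applies verbatim. Rerunning the first-integral analysis of Lemma~\ref{lem:steady-ODE-ign-bi} for monostable $f$ --- a putative local maximum would force $\phi$ to descend through $0$ and escape to $-\infty$, so a bounded nonzero solution is increasing --- one finds that \eqref{eq:steady-ODE} has a unique nonzero bounded solution $\varphi$, that $\varphi$ is increasing with $\varphi(+\infty) = 1$, and that $\bar\al \coloneqq \varphi'(0)$ is the unique positive root of $\al^2 = 2\int_{\varrho\al}^1 f$. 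Since $f > 0$ on $(0,1)$, the map $\al \mapsto \al^2 - 2\int_{\varrho\al}^1 f$ is strictly increasing on $(0,\infty)$; hence, by \eqref{eq:ODE-first-integral}, every $\phi^\al$ with $\al > \bar\al$ reaches $1$ with positive slope and escapes to $+\infty$, while $\phi^{\bar\al} = \varphi$. In either case $\phi^\al$ never returns to $0$ and yields no solution of \eqref{eq:strip-ODE}. For $\al \in (0,\bar\al)$, on the other hand, $\phi^\al$ rises to a maximum $s^\al \in (0,1)$ and then decreases to $0$ at a finite position $L^\al$, and --- exactly as in Lemma~\ref{lem:strip-ODE-ign-bi} --- the nonzero solutions of \eqref{eq:strip-ODE} are precisely the restrictions $\phi^\al|_{[0,L]}$ with $\al \in (0,\bar\al)$ and $L = L^\al$. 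The map $\al \mapsto L^\al$ is continuous.

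The single new point is the behavior as $\al \searrow 0$. In the ignition and bistable cases one had $s^\al \searrow \vartheta > 0$ and $L^\al \to \infty$; here $s^\al \searrow 0$, and since $f'(0^+) > 0$ by~\ref{hyp:mono-derivs} there are $c > 0$ and $s_0 > 0$ with $f(s) \geq c s$ on $[0, s_0]$. Once $s^\al < s_0$, the whole orbit $\phi^\al$ lies in $(0, s_0)$ on $(0, L^\al)$, where $\phi^\al$ solves $\phi'' + q\phi = 0$ with $q = f(\phi^\al)/\phi^\al \geq c$; a Sturm comparison with $w'' + c w = 0$ then bounds $L^\al$ by a fixed multiple of $\pi/\sqrt c$. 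So $L^\al$ stays bounded as $\al \searrow 0$, and by continuity it is bounded on every interval $(0, \al_1]$. Near the other endpoint nothing changes: $f(s^\al) \to f(1) = 0$ forces $L^\al \to \infty$, and the computation in Steps~1 and~2 of the proof of Lemma~\ref{lem:strip-ODE-ign-bi} --- which uses only $f'(1^-) < 0$ and $f \geq 0$ near $1$, both valid here --- gives $\dot{L}^\al > 0$ for $\al$ close to $\bar\al$. Fixing such an $\al_1 \in (0,\bar\al)$ and setting $A_{\trm{ODE}} \coloneqq \sup_{\al \in (0,\al_1]} L^\al < \infty$, we conclude that for $L > A_{\trm{ODE}}$ the equation $L^\al = L$ has no root in $(0,\al_1]$ and, by strict monotonicity together with $L^\al \to \infty$, exactly one root $\al_L \in (\al_1, \bar\al)$. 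This gives the unique nonzero solution $\varphi_L$, and $\al_L \to \bar\al$ as $L \to \infty$.

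Finally, $\varphi_L < \varphi$ in $(0,L)$ because the conserved energy $\tfrac12(\phi')^2 + F(\phi)$, with $F(s) \coloneqq \int_0^s f$, is strictly smaller for $\varphi_L$ than for $\varphi$ --- since $\al_L < \bar\al$ and $\al \mapsto \tfrac12 \al^2 + F(\varrho\al)$ is increasing --- so at any putative crossing one would have $\abs{\varphi_L'} < \varphi'$, which is incompatible with the two profiles meeting after $\varphi_L - \varphi$ starts out negative at $0^+$. For the uniform estimate \eqref{eq:strip-steady-convergence}, continuous dependence on initial data and $\al_L \to \bar\al$ give $\varphi_L \to \varphi$ locally uniformly on $[0,\infty)$; a sliding argument with a fixed Dirichlet strip steady state, exactly as in Step~4 of the proof of Lemma~\ref{lem:strip-ODE-ign-bi} (whose monotonicity statement \eqref{eq:increasing} holds here as well, since $\varrho f(\varrho\al) \geq 0$ for monostable $f$), produces a constant lower bound $\varphi_L \geq m_0 > 0$ on a central portion $[C_0, L - C_0]$. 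If \eqref{eq:strip-steady-convergence} failed along $L_n \to \infty$ at points $y_n \in [0, L_n/2]$, local uniform convergence would force $y_n \to \infty$, so the recentered profiles would converge --- along a subsequence, by elliptic estimates --- to an entire solution $\psi$ of $\psi'' + f(\psi) = 0$ with $m_0 \leq \psi \leq 1 - \eps$; as $f > 0$ on $[m_0, 1-\eps]$, such a $\psi$ is uniformly concave on $\R$, which is impossible. The only genuinely new ingredient beyond Lemma~\ref{lem:strip-ODE-ign-bi} is thus the boundedness of $L^\al$ as $\al \searrow 0$; the main remaining difficulty, just as there, is passing from local to uniform convergence in \eqref{eq:strip-steady-convergence}, which the bulk lower bound and the concavity obstruction resolve.
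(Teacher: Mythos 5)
Your proof is correct and follows the same route as the paper: re-run the shooting argument of Lemma~\ref{lem:strip-ODE-ign-bi}, observe that the only change is the behavior as $\al \searrow 0$, and show that $L^\al$ stays bounded there so that the ``shallow'' second branch $\psi_L$ disappears. The paper compresses the key new step to a single sentence (``$\phi^\al$ resembles a sine wave of bounded width''), which your Sturm comparison with $w'' + cw = 0$ makes precise; your energy argument for $\varphi_L < \varphi$ and your restatement of the Step~4--5 sliding/concavity argument for \eqref{eq:strip-steady-convergence} are likewise elaborations of what the paper invokes by reference to the ignition/bistable proof rather than genuinely different ideas.
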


\begin{proof}
  We retain the notation of the previous proof.
  The analysis of $\varphi_L$ for ignition and bistable reactions extends to the monostable case.
  The only difference is the behavior of the solutions $\phi^\al$ when $\al \searrow 0$.
  For ignition and bistable reactions, these ``shallow'' solutions extend over arbitrarily large intervals, and form the second solution $\psi_L$.
  However, when $f$ is monostable, we have $\vartheta = 0$, so $s^\al \searrow 0$ as $\al \searrow 0$.
  Since $\phi^\al$ is uniformly small, it is controlled by the linearization of \eqref{eq:strip-ODE} about $0$.
  By \ref{hyp:mono-derivs}, $\phi^\al$ resembles a sine wave of \emph{bounded width}.
  That is, $L^\al$ remains uniformly bounded as $\al \searrow  0$.
  We can thus define
  \begin{equation*}
    A_{\trm{ODE}} \coloneqq \sup_{\al \in (0, \al_1]} L^\al < \infty.
  \end{equation*}
  Any nonzero solution in $[0, L]$ for $L > A_{\trm{ODE}}$ must be $\varphi_L$, proving the lemma.
\end{proof}

We now return to the ignition and bistable setting.
Since \eqref{eq:strip-ODE} has multiple solutions, we are interested in their relative stability.
Following \cite{Vega}, we define the energy
\begin{equation}
  \label{eq:energy}
  \m H(\phi) \coloneqq \int_0^L \left[\abs{\phi'}^2 - 2 F(\phi)\right] \d y \quad \textrm{for } \quad F(s) \coloneqq \int_0^s f(r) \d r,
\end{equation}
and note that the ODE $\phi'' + f(\phi) = 0$ is the Euler--Lagrange equation for $\m H$.
\begin{lemma}
  \label{lem:stability}
  Let $f$ be ignition with $\varrho \in [0, \infty)$ or bistable with $\varrho = 0.$
  Then there exists $A_{\m H} \geq A_{\trm{ODE}}$ such that $\m H(\varphi_L) < \m H(0) < \m H(\psi_L)$ for all $L > A_{\m H}$.
\end{lemma}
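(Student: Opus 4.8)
The plan is to express $\m H$ along solutions of the ODE $\phi'' + f(\phi) = 0$ by means of its conserved quantity, and then estimate the resulting one-dimensional quadratures. Since $F(0) = 0$ we have $\m H(0) = 0$, so it suffices to show $\m H(\varphi_L) < 0 < \m H(\psi_L)$ for $L$ large. For any solution $\phi$ of $\phi'' + f(\phi) = 0$ on $[0, L]$, the quantity $(\phi')^2 + 2F(\phi)$ is constant, equal to $E_\phi \coloneqq \phi'(0)^2 + 2F(\phi(0))$; substituting $2F(\phi) = E_\phi - (\phi')^2$ into \eqref{eq:energy} gives
\begin{equation*}
  \m H(\phi) = \int_0^L \big[2(\phi')^2 - E_\phi\big] \d y = 2 \int_0^L (\phi')^2 \d y - E_\phi L .
\end{equation*}
I would then control $\int_0^L (\phi')^2 \d y$ and $E_\phi$ for $\phi \in \{\varphi_L, \psi_L\}$.

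For $\varphi_L$, crude bounds suffice. In the notation of the proof of Lemma~\ref{lem:strip-ODE-ign-bi}, $\varphi_L$ rises monotonically to its maximum $s^\al < 1$ and then falls monotonically to $0$ at $y = L$, where $\al \coloneqq \varphi_L'(0)$. Changing variables on each monotone arc — via $(\varphi_L')^2 = E_{\varphi_L} - 2F(\varphi_L)$ on the ascent and $(\varphi_L')^2 = \varphi_L'(L)^2 - 2F(\varphi_L)$ on the descent, where $\varphi_L'(L)^2 = 2F(s^\al)$ — and using that $F$ is bounded on $[0,1]$ while each arc sweeps $[0,1]$ at most once, one finds $\int_0^L (\varphi_L')^2 \d y \leq C$ with $C$ independent of $L$. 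On the other hand, \eqref{eq:strip-steady-convergence} and elliptic estimates force $\al \to \bar\al \coloneqq \varphi'(0)$, so $E_{\varphi_L} = \al^2 + 2F(\varrho\al) \to \bar\al^2 + 2F(\varrho\bar\al) = 2F(1)$, the last equality being \eqref{eq:bulk-ign-bi} for $\varphi$ together with $\varphi(+\infty) = 1$. As $F(1) > 0$ — by \ref{hyp:bi-pos} in the bistable case, and directly in the ignition case — I conclude $\m H(\varphi_L) \leq C - F(1)\,L \to -\infty$, so $\m H(\varphi_L) < 0$ for $L$ large.

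The state $\psi_L$ is the delicate case: its initial slope $\al_\psi \coloneqq \psi_L'(0)$ tends to $0$, so $E_{\psi_L} \to 0$ while $L \to \infty$, and $\m H(\psi_L) = 2\int_0^L (\psi_L')^2 \d y - E_{\psi_L} L$ is a difference of quantities vanishing at comparable rates, so I must extract its leading-order term. Using the shooting parametrization, for $L$ large one has $\varrho\al_\psi < \theta$, whence $F(\varrho\al_\psi) = 0$ and $E_{\psi_L} = \al_\psi^2$, and $\psi_L'(L) = -\al_\psi$ since $\psi_L'(L)^2 = 2F(s^{\al_\psi}) = E_{\psi_L} = \al_\psi^2$. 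Changing variables on the single ascending and single descending arc of $\psi_L$ then gives
\begin{equation*}
  \m H(\psi_L) = 2 \int_0^{s^{\al_\psi}} \frac{\al_\psi^2 - 4F(s)}{\sqrt{\al_\psi^2 - 2F(s)}} \, \d s + \m O\big(\al_\psi^2\big) ,
\end{equation*}
the error absorbing a term $-\varrho\al_\psi^2$ that appears only in the Robin case. Since $0 \leq F(s) \leq \al_\psi^2/2$ on $(\vartheta, s^{\al_\psi})$ and $s^{\al_\psi} \to \vartheta$ with $F'(\vartheta) = f(\vartheta) > 0$, the part of the integral over $(\vartheta, s^{\al_\psi})$ is $\m O(\al_\psi^2)$. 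For the part over $(0, \vartheta)$ I separate the two reaction types. When $f$ is bistable, $F < 0$ on $(0, \vartheta)$, so $\al_\psi^2 - 4F(s) \geq 4|F(s)|$ while $\al_\psi^2 - 2F(s) \leq \bar\al^2 + 2\sup_{[0,1]} |F|$, whence $2\int_0^\vartheta \frac{\al_\psi^2 - 4F(s)}{\sqrt{\al_\psi^2 - 2F(s)}}\,\d s \geq c\int_0^\vartheta |F| > 0$ uniformly in $\al_\psi$, and $\m H(\psi_L) > 0$ for $L$ large. When $f$ is ignition, $\vartheta = \theta$ and $F \equiv 0$ on $[0,\theta]$, so the integrand over $(0,\theta)$ is the constant $\al_\psi$: equivalently, the affine portions $\{\psi_L \leq \theta\}$ of $\psi_L$ have total length $2\theta/\al_\psi - \varrho$ and contribute $2\al_\psi\theta - \varrho\al_\psi^2$, giving $\m H(\psi_L) = 2\al_\psi\theta + \m O(\al_\psi^2) > 0$ for $L$ large. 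Taking $A_{\m H} \geq A_{\trm{ODE}}$ large enough for both estimates finishes the proof.

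The main obstacle is the $\psi_L$ estimate. For $\varphi_L$, uniform bounds immediately give $\m H(\varphi_L) \to -\infty$; for $\psi_L$ one subtracts two vanishing quantities and must resolve their difference at leading order, and the bistable and ignition cases — and, within ignition, Dirichlet versus Robin — must be handled separately, since $F$ is strictly negative on $(0, \vartheta)$ for bistable reactions but vanishes identically there for ignition ones. The one genuinely delicate point is treating the improper endpoint $s = s^{\al_\psi}$ of the quadrature, where $\al_\psi^2 - 2F(s)$ has a simple zero, while isolating the $\al_\psi$-order contribution; for bistable reactions this can be sidestepped via $\m H(\psi_L) \geq -2\int_0^L F(\psi_L)\,\d y$ together with the fact that $\int_0^L F(\psi_L)\,\d y$ is bounded away from $0$ below.
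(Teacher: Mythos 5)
Your proposal is correct, and it takes a genuinely different route from the paper. The paper argues qualitatively: for $\varphi_L$ it notes that $\varphi_L \approx 1$ on most of $[0,L]$ so that $\m H(\varphi_L) \approx -2F(1)L < 0$, and for $\psi_L$ it splits by reaction type, using the small-amplitude sine-wave approximation $\psi_L - \theta \sim \tfrac{2\theta}{L}\sin(\sqrt{f'(\theta^+)}\,\anon)$ to get $\m H(\psi_L) \gtrsim L^{-1} - L^{-2}$ in the ignition case, and the convergence of $\psi_L$ (recentered) to a positive soliton of strictly positive energy in the bistable case. You instead observe that the first integral $E_\phi = (\phi')^2 + 2F(\phi)$ turns the energy into $\m H(\phi) = 2\int_0^L (\phi')^2 - E_\phi L$, and then change variables along each monotone arc to obtain exact one-dimensional quadratures. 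This is cleaner on the $\varphi_L$ side (a uniform bound on $\int (\varphi_L')^2$ plus $E_{\varphi_L} \to 2F(1) > 0$ immediately gives $\m H(\varphi_L) \to -\infty$), and on the $\psi_L$ side it gives the exact identity
\begin{equation*}
  \m H(\psi_L) = 2\int_0^{s^{\al_\psi}} \frac{\al_\psi^2 - 4F(s)}{\sqrt{\al_\psi^2 - 2F(s)}}\,\d s - \varrho\al_\psi^2,
\end{equation*}
whose leading-order behavior ($2\theta\al_\psi + \m O(\al_\psi^2)$ for ignition, an $\al_\psi$-independent positive constant plus $o(1)$ for bistable) you correctly extract by splitting at $\vartheta$ and controlling the improper tail near $s^{\al_\psi}$. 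The two approaches give comparable estimates --- your ignition bound $\m H(\psi_L) \sim 2\theta\al_\psi \sim 4\theta^2/L$ matches the paper's $\m H(\psi_L) \asymp L^{-1}$ --- but your calculation is more explicit and packages the three subcases (ignition Dirichlet, ignition Robin, bistable Dirichlet) into a single formula. The one place where your write-up is slightly looser than it could be is the bound $\m O(\al_\psi^2)$ for the contribution over $(\vartheta, s^{\al_\psi})$; in fact, since $s^{\al_\psi} - \vartheta \sim \al_\psi^2/(2f(\vartheta))$ and $\int_\vartheta^{s^{\al_\psi}}(\al_\psi^2 - 2F)^{-1/2} \sim C\al_\psi$, that piece is $\m O(\al_\psi^3)$, but either order suffices for your conclusion.
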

Thus, as measured by $\m H$, $\varphi_L$ is the most stable solution of \eqref{eq:strip-ODE} and $\psi_L$ is the least.
In the next section, we use this stability to establish the uniqueness of traveling waves in strips.

\begin{proof}
  Suppose $L > A_{\trm{ODE}}$.
  By Lemma~\ref{lem:strip-ODE-ign-bi}, $\varphi_L$, $0$, and $\psi_L$ are the only solutions to \eqref{eq:strip-ODE}.
  When $L$ is large, $\varphi_L \approx 1$ and $\varphi_L' \approx 0$ on nearly the entire domain.
  By \ref{hyp:ign} or \ref{hyp:bi-pos}, $F(1) > 0$.
  It follows that $\m H(\varphi_L) < 0$ when $L \gg 1$.
  Also, we trivially have $\m H(0) = 0$.
  This leaves only $\m H(\psi_L)$.
  We consider the ignition and bistable cases separately.
  
  First, suppose $f$ is ignition.
  As shown in the proof of Lemma~\ref{lem:strip-ODE-ign-bi}, $\psi_L$ is affine linear outside the interval $[y_1, y_2] \coloneqq \psi_L^{-1}([\theta, 1))$.
  On $[y_1, y_2]$, the function $\psi_L - \theta$ is a Dirichlet solution to \eqref{eq:strip-ODE} with reaction $f(\anon + \theta)$, which is monostable on the interval $[0, 1 - \theta]$.
  By \ref{hyp:ign-derivs}, $\psi_L - \theta$ resembles a small sine wave of bounded width when $L$ is large.
  Precisely,
  \begin{equation*}
    \psi_L\left(y\right) - \theta \sim \frac{2\theta}{L} \sin\left[\sqrt{f'(\theta^+)} (y - y_1)\right] \quad \textrm{on } [y_1, y_2]
  \end{equation*}
  as $L \to \infty$.
  Now, $F(s - \theta) \sim \frac{1}{2} f'(\theta^+) s^2$ as $s \searrow 0$, so
  \begin{equation*}
    \int_{y_1}^{y_2} F(\psi_L(y)) \d y \leq C L^{-2}
  \end{equation*}
  for some constant $C \geq 1$ which may change from line to line.
  Now, $F(\psi_L)$ vanishes outside $[y_1, y_2]$, where $|\psi_L'| \sim 2 \theta L^{-1}$.
  Therefore
  \begin{equation*}
    \m H(\psi_L) \geq \int_{[0, L] \setminus [y_1, y_2]} |\psi_L'|^2 - 2\int_{y_1}^{y_2} F(\psi_L) \geq C^{-1} L^{-1} - C L^{-2}.
  \end{equation*}
  It follows that $\m H(\psi_L) > 0$ once $L$ is sufficiently large.

  Now, suppose $f$ is bistable.
  Then $F$ is negative below $\vartheta$ and positive above it.
  We view $\psi_L$ about its maximum at $y = L/2$.
  As $L \to \infty$, it converges uniformly to a positive soliton in $\R$ solving
  \begin{equation*}
    \psi'' + f(\psi) = 0, \quad \psi(0) = \vartheta, \quad \psi'(0) = 0.
  \end{equation*}
  Since $0 < \psi \leq \vartheta$, $\psi$ has positive energy.
  Clearly $\m H(\psi_L)$ converges to this energy, so $\m H(\psi_L) > 0$ for $L$ sufficiently large.
  
  Thus in each case, there exists $A_{\m H} \geq A_{\trm{ODE}}$ such that $\m H(\varphi_L) < \m H(0) < \m H(\psi_L)$ when $L > A_{\m H}.$
\end{proof}

\section{Traveling waves in strips}
\label{sec:strip-TW}

In this section, we construct and control traveling waves in strips $\R \times [0, L]$.
We thus work solely in two dimensions.
We will always assume that $L > A_{\trm{ODE}}$, so that Lemma~\ref{lem:strip-ODE-ign-bi} classifies the steady-state solutions of \eqref{eq:strip-ODE}.

\subsection{Construction}

We wish to construct solutions to the elliptic boundary-value problem
\begin{equation}
  \label{eq:strip-TW}
  \begin{cases}
    \Delta \Phi_L + c_L \partial_x \Phi_L + f(\Phi_L) = 0 & \textrm{in } \R \times (0, L),\\
    \partial_y \Phi_L(x, 0) = \varrho^{-1} \Phi_L(x, 0),\\
    \Phi_L(x, L) = 0,\\
    \Phi_L(-\infty, y) = \varphi_L(y),\\
    \Phi_L(+\infty, y) = 0
  \end{cases}
\end{equation}
for some speed $c_L \in \R$ to be determined.
The wave $\Phi_L$ satisfies our usual absorbing boundary condition on the lower boundary $\R \times \{0\}$ and a Dirichlet condition on the upper boundary $\R \times \{L\}$.
Moreover, it connects the steady states $\varphi_L$ and $0$ from left to right.

To begin, we construct approximate traveling waves in the rectangle
\begin{equation*}
  \Omega_a \coloneqq  (-a, a) \times (0, L)
\end{equation*}
for $a > 0$.
We study $\Phi^{a, c}$ in $\bar \Omega_a$ satisfying
\begin{equation}
  \label{eq:TW-box-ign-bi}
  \begin{cases}
    \Delta \Phi^{a, c} + c \partial_x \Phi^{a, c} + f(\Phi^{a, c}) = 0 & \textrm{in } \Omega_a,\\
    \partial_y\Phi^{a, c}(x, 0) = \varrho^{-1}\Phi^{a, c}(x, 0),\\
    \Phi^{a, c}(x, L) = 0,\\
    \Phi^{a, c}(-a, y) = \varphi_L(y),\\
    \Phi^{a, c}(a, y) = 0.
  \end{cases}
\end{equation}
For the moment, we treat $a > 0$ and $c \in \R$ as fixed.
Of course, $\Phi^{a, c}$ also depends on $L$, but we suppress this dependence for the sake of legibility.

\begin{lemma}
  \label{lem:box-ign-bi}
  Let $f$ be ignition with $\varrho \in [0, \infty)$ or bistable with $\varrho = 0.$
  Let $L > A_{\trm{ODE}}$ and $a > 0$.
  Then for each $c \in \R$, there exists a unique solution $\Phi^{a, c}$ to \eqref{eq:TW-box-ign-bi}.
  Moreover, $0 < \Phi^{a, c} < \varphi_L$ and $\partial_x\Phi^{a, c} < 0$ in $\Omega_a$.
  If $\varrho = 0$, $\Phi^{a, c}$ is even in $y$ about $y = L/2$ and $\partial_y \Phi^{a, c} > 0$ on $(-a, a) \times (0, L/2)$.
\end{lemma}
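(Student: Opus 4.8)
The plan is to build $\Phi^{a,c}$ by sub- and supersolutions and then to read off its qualitative properties, and its uniqueness, from the strong maximum principle and the sliding method of \cite{BN}, working throughout on the \emph{bounded} domain $\Omega_a$, where these tools are most robust. For existence, note that the $x$-independent function $\varphi_L(y)$ is a supersolution of \eqref{eq:TW-box-ign-bi}: it solves the interior equation (the transport term vanishes), it obeys the boundary conditions on $\R\times\{0\}$ and $\R\times\{L\}$, it equals $\varphi_L$ on the left edge, and it is nonnegative on the right edge; and $0$ is a subsolution lying below $\varphi_L$. Monotone iteration, or equivalently the parabolic flow started from $\varphi_L$ (which decreases in time since $\varphi_L$ is a stationary supersolution), produces a solution $\Phi^{a,c}$ with $0\le\Phi^{a,c}\le\varphi_L$; the linear problems arising are well posed because the Robin coefficient $\varrho^{-1}\ge 0$ makes the boundary form dissipative. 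To upgrade to $0<\Phi^{a,c}<\varphi_L$ in $\Omega_a$, apply the strong maximum principle and the Hopf lemma to $\Phi^{a,c}$ and to $\varphi_L-\Phi^{a,c}$, each of which satisfies a linear elliptic equation $\Delta w+c\,\partial_x w+q\,w=0$ with $q$ a bounded difference quotient of $f$; such a nonnegative $w$ cannot vanish at an interior point, nor on the open bottom edge (there Hopf would force $\partial_y w>0$, contradicting the Robin relation $\partial_y w=\varrho^{-1}w=0$), without being identically zero, which the lateral data $\varphi_L\not\equiv 0$ and $0$ exclude.

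For the monotonicity $\partial_x\Phi^{a,c}<0$, slide in $x$. For $\tau\in[0,2a]$ set $w_\tau(x,y)=\Phi^{a,c}(x-\tau,y)-\Phi^{a,c}(x,y)$ on the overlap $(-a+\tau,a)\times(0,L)$; then $w_\tau\ge 0$ on the left, top, and right edges (using $\Phi^{a,c}\le\varphi_L$ on the left and $\Phi^{a,c}\ge 0$ on the right), $w_\tau$ solves a linear elliptic equation with bounded zeroth-order coefficient, and it inherits the Robin condition on the bottom. For $\tau$ near $2a$ the overlap is a thin rectangle, so a thin-domain estimate (which absorbs the zeroth-order term, the Robin edge again cooperating with Hopf) gives $w_\tau\ge 0$. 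Let $\tau^\ast$ be the least $\tau_0$ for which $w_\tau\ge 0$ throughout $[\tau_0,2a]$. If $\tau^\ast>0$, then $w_{\tau^\ast}\ge 0$ and $w_{\tau^\ast}\not\equiv 0$ (it is positive on the interior of the right edge), so $w_{\tau^\ast}>0$ inside by the strong maximum principle; the standard small-width argument near the horizontal edges, where $w_{\tau^\ast}$ may vanish, then preserves $w_\tau\ge 0$ for $\tau$ slightly below $\tau^\ast$, a contradiction. Hence $w_\tau\ge 0$ for all $\tau$, so $\Phi^{a,c}$ is nonincreasing in $x$; moreover $w_\tau>0$ in the interior for every $\tau\in(0,2a)$, and the strong maximum principle applied to $\partial_x\Phi^{a,c}$ upgrades this to $\partial_x\Phi^{a,c}<0$ in $\Omega_a$.

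Uniqueness comes from the same sliding scheme applied to the difference $\Phi_1(x-\tau,y)-\Phi_2(x,y)$ of two solutions: the argument above shows $\Phi_1\ge\Phi_2$, and by symmetry of the roles $\Phi_2\ge\Phi_1$, so $\Phi_1=\Phi_2$. When $\varrho=0$, \eqref{eq:TW-box-ign-bi} is a pure Dirichlet problem and $\varphi_L$ is even about $y=L/2$ by ODE uniqueness (Lemma~\ref{lem:strip-ODE-ign-bi}); the reflected function $\Phi^{a,c}(x,L-y)$ solves the same problem, so uniqueness forces $\Phi^{a,c}(x,y)=\Phi^{a,c}(x,L-y)$. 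Finally $\partial_y\Phi^{a,c}>0$ on $(-a,a)\times(0,L/2)$ follows by moving planes in $y$: reflect about $y=\lambda$ and let $\lambda$ increase from $0$ to $L/2$, using $\Phi^{a,c}\ge 0$ at $y=0$, the monotonicity of $\varphi_L$ on $(0,L/2)$ along the left edge, $\Phi^{a,c}\equiv 0$ along the right edge, and the same thin-domain and strong-maximum-principle mechanism.

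The main obstacle is the lack of a bare comparison principle: the linearized zeroth-order coefficient $q$ has no definite sign, so every comparison must either be localized to a thin domain or be driven by the sliding/moving-plane bootstrap. The delicate points are the repeated interplay of the Hopf lemma with the Robin condition on the bottom edge, and the fact that $f$ is only piecewise $\m C^1$, which prevents differentiating the equation freely and so forces the strictness $\partial_x\Phi^{a,c}<0$ (and $\partial_y\Phi^{a,c}>0$) to be extracted through a careful limiting use of the strong maximum principle rather than a one-line argument.
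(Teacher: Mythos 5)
Your proposal is correct and follows the paper's own route very closely: monotone iteration (parabolic flow) from the ordered pair $(0,\varphi_L)$ for existence, strong-maximum-principle and Hopf arguments to upgrade $0\le\Phi^{a,c}\le\varphi_L$ to strict inequalities, and the Berestycki--Nirenberg sliding scheme both for uniqueness and for $\partial_x\Phi^{a,c}<0$. Two points are done a bit differently. To launch the sliding near $\tau=2a$ you invoke a thin-domain comparison; the paper instead just notes that the overlap $(-a,a-\tau)\times(0,L)$ collapses at $\tau=2a$, so the inequality there is vacuous and $\sigma=\inf\{\ell:\tilde\Phi_\ell\le\Phi\}$ is well defined in $[0,2a]$ without any estimate. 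And for $\partial_y\Phi^{a,c}>0$ on the lower half when $\varrho=0$, you use moving planes in $y$, whereas the paper restricts to the half-box $(-a,a)\times(0,L/2)$ with an induced Neumann condition at $y=L/2$, observes that all boundary data are monotone in $y$ (since $\varphi_L$ is increasing there), and concludes that the parabolic evolution from $0$ in the half-box preserves $y$-monotonicity, so the limiting solution is nondecreasing in $y$ (then strict by the strong maximum principle). Both tactics are standard and work here; yours requires checking the reflection comparison $\varphi_L(2\lambda-y)\ge\varphi_L(y)$ on the left edge (which does follow from the symmetry and unimodality of $\varphi_L$) plus another thin-domain start, while the paper's piggybacks directly on the half-box uniqueness it has already established by sliding. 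These are genuine but minor variants within the same argument.
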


\begin{proof}
  For existence, we observe that $0$ and $\varphi_L$ form an ordered pair of sub- and supersolutions, respectively, for \eqref{eq:TW-box-ign-bi}.
  Hence if we solve the parabolic version of \eqref{eq:TW-box-ign-bi} with initial data $\varphi_L$, the solution will converge monotonically to a steady state $\Phi$ between $0$ and $\varphi_L$ as $t \to \infty$.
  Also, the parabolic evolution from constant initial data shows that any solution is confined between the maximal and minimal solutions of \eqref{eq:strip-ODE}, namely $\varphi_L$ and $0$.

  In our uniqueness argument, Robin and Dirichlet boundaries require superficially different treatment.
  We thus first suppose that $\varrho > 0$.
  We claim that
  \begin{align}
    \label{eq:Robin-lower}
    \Phi &> 0 \quad \textrm{in } [-a,a) \times [0, L),\\
    \label{eq:deriv-lower}
    \partial_y \Phi &< 0 \quad \textrm{on } [-a, a) \times \{L\}.
  \end{align}
  Since $\varphi_L > 0$ on $[0, L)$, \eqref{eq:Robin-lower} holds on the left boundary.
  By the strong maximum principle, it extends to the interior $(-a, a) \times (0, L)$.
  Then the Hopf lemma implies that $\Phi$ and $\partial_y \Phi$ cannot simultaneously vanish on $(-a, a) \times \{0\}$.
  By the Robin condition, $\Phi > 0$ there; \eqref{eq:Robin-lower} follows.
  Of course, we cannot extend \eqref{eq:Robin-lower} to the upper boundary, but the Hopf lemma yields \eqref{eq:deriv-lower}.
  By identical reasoning, we obtain
  \begin{align}
    \label{eq:Robin-upper}
    \Phi &< \varphi_L \quad \textrm{in } (-a, a] \times [0, L),\\
    \label{eq:deriv-upper}
    \partial_y \Phi &> \varphi_L'(L) \quad \textrm{on } (-a, a] \times \{L\}.
  \end{align}
  
  We employ these bounds in the sliding method to establish uniqueness and monotonicity.
  Suppose $\ti \Phi$ is another solution to \eqref{eq:TW-box-ign-bi}.
  As argued above, ${0 \leq \ti \Phi \leq \varphi_L.}$
  Also, $\ti \Phi$ satisfies \eqref{eq:Robin-lower}--\eqref{eq:deriv-upper}.
  For $\ell \in [0, 2a]$, consider the shifted function
  \begin{equation*}
    \ti \Phi_\ell (x, y) \coloneqq \ti \Phi (x + \ell, y)
  \end{equation*}
  in $\Omega_{a}^\ell \coloneqq \Omega_{a} - \ell \tbf{e}_x$ and let
  \begin{equation*}
    D^\ell \coloneqq \Omega_{a} \cap \Omega_{a}^\ell = (- a, a - \ell) \times (0,L).
  \end{equation*}
  Define
  \begin{equation*}
    \sigma \coloneqq \inf\big\{\ell \in [0, 2a] \mid \ti \Phi_\ell \leq \Phi\big\},
  \end{equation*}
  noting that $\ti \Phi_{2a} \leq \Phi$ because $\varphi_L \geq 0$, so $\sigma \in [0, 2a]$.
  Suppose for the sake of contradiction that $\sigma > 0$.
  By continuity, $\ti \Phi_\sigma \leq \Phi$ in $\bar D^\sigma$.
  We claim that
  \begin{align}
    \label{eq:solution-comparison}
    \ti \Phi_\sigma &< \Phi \quad \textrm{on } [-a,a - \sigma] \times [0, L),\\
    \label{eq:deriv-solution-comparison}
    \partial_y \ti \Phi_\sigma &> \partial_y \Phi \quad \textrm{on } [-a, a - \sigma] \times \{L\}.
  \end{align}
  
  Since $\Phi$ satisfies \eqref{eq:Robin-lower} and $\ti \Phi$ satisfies \eqref{eq:Robin-upper}, we automatically have $\ti \Phi_\sigma < \Phi$ on the vertical sides of $D^\sigma$.
  The strong maximum principle extends strict inequality to the interior $D^\sigma$.
  Then the Robin condition and the Hopf lemma force $\ti \Phi_\sigma < \Phi$ on the lower side, and we obtain \eqref{eq:solution-comparison}.
  Again, the Hopf lemma implies \eqref{eq:deriv-solution-comparison}.

  By elliptic estimates, our solutions are $\m C^1$.
  It follows that \eqref{eq:solution-comparison} and \eqref{eq:deriv-solution-comparison} are open conditions on $\sigma$.
  That is, \eqref{eq:solution-comparison} and \eqref{eq:deriv-solution-comparison} imply
  \begin{align*}
    \ti \Phi_\ell &< \Phi \quad \textrm{on } [-a,a - \ell] \times [0, L),\\
    \partial_y \ti \Phi_\ell &> \partial_y \Phi \quad \textrm{on } [-a, a - \ell] \times \{L\}
  \end{align*}
  for all $\ell$ sufficiently close to $\sigma$.
  This contradicts the definition of $\sigma$, so in fact $\sigma = 0$.

  We have shown that $\ti \Phi \leq \Phi$.
  Reversing the roles of $\Phi$ and $\ti \Phi$, we see that $\Phi$ is unique.
  Furthermore, $\Phi_\ell \leq \Phi$ for all $\ell \in [0, 2a]$, so $\partial_x \Phi\leq 0$.
  Since $\Phi$ is not constant in $x$, the strong maximum principle implies that $\partial_x \Phi < 0$ in $\Omega_a$.

  We now turn to the Dirichlet case $\varrho = 0$.
  Then, $0, \varphi_L$, and $\Phi$ agree on the lower boundary, so we must include derivative conditions there as well.
  The proof is otherwise identical, so we do not repeat it.
  We do, however, study the $y$-dependence of $\Phi$.
  Since the upper and lower boundary conditions agree, $\Phi(x, L - y)$ is also a solution.
  By uniqueness, $\Phi(x, L - y) = \Phi(x, y)$, as claimed.
  
  If we restrict our solution to $[-a, a] \times [0, L/2]$, symmetry implies that $\partial_y \Phi = 0$ on $[-a, a] \times \{L/2\}$.
  That is, $\Phi$ satisfies a Neumann condition on this new upper boundary.
  The sliding argument above works with Neumann conditions as well, so there is no other solution in the lower half-box.
  Since $\varphi_L$ is increasing on $[0, L/2]$ (by ODE uniqueness), the boundary conditions are monotone in $y$.
  As the limit of the parabolic evolution from $0$ in the half-box, $\Phi$ is nondecreasing in $y$ when $y \leq L/2$.
  By the strong maximum principle, $\partial_y \Phi > 0$ on $(-a, a) \times (0, L/2)$.

  This concludes the proof of the lemma.
  In the remainder of the paper, we denote the unique solution $\Phi$ by $\Phi^{a, c}$.
\end{proof}

We are interested in the dependence of $\Phi^{a, c}$ on $c$.
From the previous lemma, we immediately obtain:
\begin{corollary}
  \label{cor:monotone-in-c}
  For each $L > A_{\trm{ODE}}$ and $a > 0$, the solution $\Phi^{a, c}$ to \eqref{eq:TW-box-ign-bi} is continuous and decreasing in $c$.
\end{corollary}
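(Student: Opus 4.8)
The plan is to deduce monotonicity in $c$ from the strict sign $\partial_x \Phi^{a,c} < 0$ furnished by Lemma~\ref{lem:box-ign-bi}, and then to combine monotonicity with uniqueness to obtain continuity. First I would fix $c_1 < c_2$ and abbreviate $\Phi_i \coloneqq \Phi^{a,c_i}$. Since $\Phi_2$ solves the $c_2$-equation,
\[
  \Delta \Phi_2 + c_1 \partial_x \Phi_2 + f(\Phi_2) = (c_1 - c_2)\,\partial_x \Phi_2 > 0 \quad \textrm{in } \Omega_a,
\]
using $c_1 - c_2 < 0$ and $\partial_x \Phi_2 < 0$. Thus $\Phi_2$ is a strict subsolution of the $c_1$-equation, carrying precisely the boundary data of $\Phi_1$, since the boundary conditions in \eqref{eq:TW-box-ign-bi} are independent of $c$. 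Moreover $0 \le \Phi_2 \le \varphi_L$, and $\varphi_L$---being independent of $x$---is a stationary solution, hence a supersolution, of the $c_1$-equation that dominates the boundary data. I would then run the parabolic equation $\partial_t w = \Delta w + c_1 \partial_x w + f(w)$ with the boundary conditions of \eqref{eq:TW-box-ign-bi} from the initial datum $\Phi_2$: the subsolution property makes $w$ nondecreasing in $t$, while $\varphi_L$ keeps it bounded, so $w(t)$ increases to a steady state lying in $[\Phi_2, \varphi_L]$. By the uniqueness in Lemma~\ref{lem:box-ign-bi} this steady state is $\Phi_1$, whence $\Phi_2 \le \Phi_1$ on $\bar\Omega_a$.

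Next I would upgrade this to strict inequality in $\Omega_a$. The difference $w \coloneqq \Phi_1 - \Phi_2 \ge 0$ satisfies the linear elliptic equation $\Delta w + c_1 \partial_x w + b\,w = (c_2 - c_1)\,\partial_x \Phi_2 \le 0$, where $b \coloneqq \int_0^1 f'\!\big(\Phi_2 + t(\Phi_1 - \Phi_2)\big)\,\d t$ is bounded because $f$ is piecewise $\m C^1$. A nonnegative supersolution of such an operator is either strictly positive in $\Omega_a$ or identically zero; in the second case $\Phi_1 \equiv \Phi_2$, and subtracting the two equations forces $(c_2 - c_1)\,\partial_x \Phi_1 \equiv 0$, contradicting $\partial_x \Phi_1 < 0$. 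Hence $c \mapsto \Phi^{a,c}$ is strictly decreasing on $\Omega_a$ (and nonincreasing on $\bar\Omega_a$, the boundary values being fixed).

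For continuity, I would fix $c_0 \in \R$. By the monotonicity just proved, $\Phi^{a,c} \nearrow \Phi_-$ as $c \searrow c_0$ with $\Phi_- \le \Phi^{a,c_0}$, and $\Phi^{a,c} \searrow \Phi_+$ as $c \nearrow c_0$ with $\Phi_+ \ge \Phi^{a,c_0}$. The uniform bound $0 \le \Phi^{a,c} \le \varphi_L$ gives, via $W^{2,p}$ and Schauder estimates, local $\m C^2$ bounds; passing to the limit shows that $\Phi_\pm$ solve \eqref{eq:TW-box-ign-bi} with speed $c_0$ and the prescribed boundary data, so $\Phi_\pm = \Phi^{a,c_0}$ by uniqueness. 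This yields pointwise and, by Dini's theorem (the limit being continuous on $\bar\Omega_a$), uniform continuity of $c \mapsto \Phi^{a,c}$; $\m C^2_{\trm{loc}}$ continuity then follows from the elliptic estimates.

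I do not anticipate a genuine obstacle, since everything reduces to Lemma~\ref{lem:box-ign-bi}. The single point deserving care is the parabolic comparison step in the rectangle with mixed Robin--Dirichlet conditions---well-posedness and a comparison principle, including at the corners---but this is standard and already implicit in the proof of Lemma~\ref{lem:box-ign-bi}; if one prefers, the parabolic flow can be replaced by a monotone elliptic iteration, or the sliding argument of Lemma~\ref{lem:box-ign-bi} can be adapted to compare $\Phi_1$ with $\Phi_2$ directly.
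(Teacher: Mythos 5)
Your argument matches the paper's proof exactly in spirit: the paper also observes that $\partial_x\Phi^{a,c_2}<0$ makes $\Phi^{a,c_2}$ a subsolution of the $c_1$-problem with identical boundary data, invokes uniqueness to conclude $\Phi^{a,c_2}\le\Phi^{a,c_1}$, and cites standard elliptic estimates for continuity. You merely supply the details the paper leaves implicit (the parabolic sweep up to the steady state, the strict comparison via the strong maximum principle, and the double-sided limit argument for continuity), all of which are correct.
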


\begin{proof}
  Continuity follows from standard elliptic estimates.
  Suppose $c_1 < c_2$.
  By Lemma~\ref{lem:box-ign-bi}, $(c_2 - c_1) \partial_x \Phi^{a, c_2} < 0$.
  Hence $\Phi^{a, c_2}$ is a subsolution to \eqref{eq:TW-box-ign-bi} for $c = c_1$.
  Since the solutions are unique, $\Phi^{a, c_2} \leq \Phi^{a, c_1}$.
\end{proof}

Presumably,
\begin{equation*}
  \lim_{c \to -\infty} \Phi^{a, c} = \varphi_L \And \lim_{c \to \infty} \Phi^{a, c} = 0
\end{equation*}
locally uniformly in $\Omega_a$.
We do not prove this---weaker bounds suffice for our purposes.
Recall that $c_*$ denotes the unique speed of the one-dimensional wave for $f$ connecting $1$ to $0$.
\begin{lemma}
  \label{lem:upper}
  For every $s \in (0, 1)$ and $L > A_{\trm{ODE}}$, there exists $A_{\trm{upper}}(s) > 0$ such that $\Phi^{a, c_*}(0, L/2) < s$ for all $a > A_{\trm{upper}}$.
\end{lemma}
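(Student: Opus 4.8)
The goal is an upper bound on $\Phi^{a,c_*}$ at the center point $(0,L/2)$ that holds uniformly once $a$ is large enough. The natural strategy is to build a supersolution to \eqref{eq:TW-box-ign-bi} with $c = c_*$ that, at $x = 0$ and $y = L/2$, takes a value $< s$, and then invoke the comparison principle (equivalently, the uniqueness/ordering from Lemma~\ref{lem:box-ign-bi}). The key resource is the \emph{one-dimensional} traveling wave $U$ of speed $c_*$ connecting $1$ to $0$: it solves $U'' + c_* U' + f(U) = 0$ on $\R$ with $U(-\infty) = 1$, $U(+\infty) = 0$, and is strictly decreasing. Crucially, $U$ depends only on $x$, so it automatically satisfies the interior PDE in \eqref{eq:TW-box-ign-bi} (with equality), and since $\partial_y U = 0$ it respects a Neumann condition on $y = 0$ and $y = L$ — but not the Robin or Dirichlet boundary conditions we actually have. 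So $U$ itself is not quite a supersolution; I will need to modify it.

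First I would handle the Dirichlet case ($\varrho = 0$, which covers bistable and, when we choose, ignition). Here the issue is only the top boundary $y = L$, where we need the supersolution to be $\geq 0$; a function of $x$ alone is automatically $\geq 0$ if it is nonnegative, so in fact a translate $U(x - x_0)$ \emph{is} already a supersolution for the Dirichlet problem on the lower boundary $y=0$ as well (there $\Phi = 0 \leq U(x-x_0)$). Thus $\Phi^{a,c_*}(x,y) \leq U(x - x_0)$ provided the inequality holds on $\partial\Omega_a$: we need $U(-a - x_0) \geq \varphi_L(y)$ on the left, which since $\varphi_L < 1 = U(-\infty)$ holds for $x_0$ sufficiently negative \emph{independent of $a$} once we note $U$ is continuous and $\sup\varphi_L < 1$ — wait, $\varphi_L \to \varphi$ and $\sup \varphi < 1$ too, so $\sup_L \sup \varphi_L \le \sup\varphi <1$; choose $x_0 = x_0(L)$ with $U(-a - x_0) \ge \sup\varphi_L$, but this forces $x_0 \to -\infty$ as $a \to \infty$. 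Better: translate so that the \emph{left} edge is handled by making $U$ close to $1$ there — i.e. pick $x_0$ so that $U(\,\cdot\,)$ restricted to $[-a,a]$ still drops below $s$ by $x = 0$. Concretely, since $U(+\infty) = 0$, choose $\xi_s$ with $U(\xi_s) < s$; then set the translate $\bar U(x) := U(x + \xi_s + a)$, which equals $U(\xi_s) < s$ at... no. Let me restate cleanly: we want $\bar U(-a) \geq \varphi_L$ (so $\bar U \approx 1$ near the left) and $\bar U(0) < s$. Since $U$ is decreasing from $1$ to $0$, for fixed $s$ there is a fixed length $\ell_s$ such that $U$ drops from above $\sup\varphi_L$ to below $s$ over an interval of length $\ell_s$ (this $\ell_s$ depends only on $f$ and $s$, not on $a$ or $L$ — uniformity in $L$ comes from $\sup_L\sup\varphi_L < 1$). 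So once $a > \ell_s$, we can place the translate $\bar U$ with $\bar U(-a) \geq \sup\varphi_L \geq \varphi_L(y)$ and $\bar U(0) < s$; then $\bar U$ is a supersolution dominating $\Phi^{a,c_*}$ on all of $\partial\Omega_a$ (using $\Phi^{a,c_*} = 0 \leq \bar U$ on the right and top/bottom Dirichlet edges, and $0 \le \bar U$), hence $\Phi^{a,c_*}(0,L/2) \leq \bar U(0) < s$. Set $A_{\trm{upper}}(s) := \ell_s$.

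For the Robin case ($\varrho > 0$, ignition only), $\bar U$ fails the lower boundary condition $\partial_y \Phi = \varrho^{-1}\Phi$: since $\partial_y \bar U = 0$ but $\bar U > 0$, we'd need $\partial_y(\text{supersolution}) - \varrho^{-1}(\text{supersolution}) \geq 0$ at $y = 0$, which a pure function of $x$ violates. The fix is to add a small $y$-dependent correction, e.g. replace $\bar U(x)$ by $\bar U(x)\,g(y)$ or $\bar U(x) + \varepsilon h(y)$ with $h$ chosen so $h'(0) - \varrho^{-1}h(0) \geq 0$, $h \geq 0$, and $h$ small; one must then check the interior supersolution inequality survives (the extra terms are $O(\varepsilon)$ and can be absorbed using that $f$ is ignition — $f(u) = 0$ for $u$ near $0$, and away from $0$ the reaction term has the right sign after perturbation, or one uses that $U$ has a strict margin). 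Alternatively, and more cleanly, use that for ignition reactions $\varphi$ and the wave $U$ are flat near their lower values, and build the supersolution as $\min\{\bar U(x) + \varepsilon\psi(y),\, 1\}$ or directly compare with a supersolution of the form $\bar U(x - \eta y)$ for a small slope $\eta > 0$ tuned to the Robin constant. I would also, as a shortcut, note that the Robin problem on $\R\times[0,L]$ can be compared to a Dirichlet problem on a slightly larger strip $\R\times[0,L+\varrho]$ via the standard extension (the Robin condition $\partial_y\Phi = \varrho^{-1}\Phi$ at $y=0$ means the linear extension of $\Phi$ hits zero at $y = -\varrho$), reducing to the case already handled — this is the slickest route and avoids fiddly perturbations.

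I expect the main obstacle to be the Robin boundary condition: getting a clean supersolution there without introducing $L$- or $a$-dependence into the constant $A_{\trm{upper}}(s)$. The resolution is to keep the correction term's size controlled purely by $\varrho$ and $f$ (via the ignition structure, which gives $f \equiv 0$ near $0$ so the perturbation argument has room), or to use the Dirichlet-extension trick. The Dirichlet case is essentially immediate once one observes that a well-placed translate of the $1$D wave is already a supersolution and that the width $\ell_s$ needed for it to descend below $s$ is uniform in $L$ because $\sup_L \sup_{[0,L]} \varphi_L \leq \sup \varphi < 1$.
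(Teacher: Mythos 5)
Your core strategy is the paper's: place a translate $\bar U$ of the one-dimensional wave $U$ of speed $c_*$ so that it dominates $\varphi_L$ on the left edge $x=-a$, then invoke the uniqueness from Lemma~\ref{lem:box-ign-bi} to conclude $\Phi^{a,c_*}\leq \bar U$ and hence $\Phi^{a,c_*}(0,L/2)\leq \bar U(0)<s$ once $a$ is large.

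Where you go astray is the Robin case, and the error is a sign. You assert that a supersolution at $y=0$ must satisfy $\partial_y v-\varrho^{-1}v\geq 0$ and conclude that a function of $x$ alone fails. The inequality is backward: the outward normal at $y=0$ is $-\partial_y$, so the Robin condition $\partial_y u=\varrho^{-1}u$ reads $\partial_\nu u+\varrho^{-1}u=0$, and a supersolution must satisfy $\partial_\nu v+\varrho^{-1}v\geq 0$, i.e.\ $\partial_y v\leq \varrho^{-1}v$. For $v=\bar U$ one has $\partial_y v=0\leq\varrho^{-1}\bar U$ because $\bar U\geq 0$, so $\bar U$ is \emph{already} a Robin supersolution with no modification needed. (A function of $x$ alone obeys a Neumann condition at $y=0$; Neumann absorbs nothing, Robin absorbs a little, so Neumann-type functions sit above.) All the $y$-dependent corrections and slanted-wave devices you sketch are therefore unnecessary, and indeed would not close with the inequality as you wrote it; the ``Dirichlet-extension'' shortcut is also not sound, since a Robin solution does not solve the interior equation on the extended strip. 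The paper's proof is exactly the single comparison with $U(\cdot+a+B)$, $B=U^{-1}(\max\varphi_L)$, checked on all four sides of $\partial\Omega_a$, uniformly in $\varrho$.

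A secondary point: your claimed uniformity in $L$ rests on $\sup_L\sup\varphi_L<1$, which is false; $\max\varphi_L=s^{\alpha_L}\to 1$ as $L\to\infty$, so $B=U^{-1}(\max\varphi_L)\to-\infty$ and the required translation length grows with $L$. This does not damage the lemma, whose quantifier structure permits $A_{\trm{upper}}$ to depend on $L$, but the $L$-uniformity you assert is unjustified.
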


\begin{proof}
  Let $U$ denote the one-dimensional traveling wave, so that
  \begin{equation*}
    U'' + c_* U' + f(U) = 0, \quad U(-\infty) = 1, \And U(+\infty) = 0.
  \end{equation*}
  Of course, this only determines $U$ up to translation, so we further assume that $U(0) = \vartheta$.

  Let $B \coloneqq U^{-1}(\max \varphi_L) \in \R$.
  Then $U(\anon + a + B)$ is a supersolution to \eqref{eq:TW-box-ign-bi} when $c = c_*$.
  Since the solution to \eqref{eq:TW-box-ign-bi} is unique, this implies that
  \begin{equation*}
    \Phi^{a, c_*}(x, y) \leq U(x + a + B) \ForAll (x, y) \in \bar{\Omega}_a.
  \end{equation*}
  In particular,
  \begin{equation*}
    \Phi^{a, c_*}(0, L/2) \leq U(a + B).
  \end{equation*}
  Now $U(+\infty) = 0$, so for each $s \in (0, 1)$ there exists $A_{\trm{upper}}(s) > 0$ such that ${U(A_{\trm{upper}} + B) \leq s.}$
  Then $\Phi^{a, c_*}(0, L/2) < s$ for all $a > A_{\trm{upper}}$, as $U$ is monotone decreasing.
\end{proof}

\begin{lemma}
  \label{lem:lower}
  For all $s \in (0, 1)$, there exist $A_{\trm{lower}}(s) \geq A_{\trm{ODE}}$ and $\gamma(s) > 0$ such that the following holds.
  For all $a, L > A_{\trm{lower}}$, we have $\Phi^{a, \gamma}(0, L/2) > s.$
\end{lemma}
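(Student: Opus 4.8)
The plan is to bound $\Phi^{a,\gamma}$ from below by a compactly supported radial subsolution concentrated at the center of $\Omega_a$, refining the construction behind Proposition~\ref{prop:steady-ign-bi-sub} so that it tolerates a small drift. First I would pick $\delta \in (0, 1 - \vartheta)$ with $\vartheta + \delta > s$; this is possible because $s < 1$. Then I would repeat the construction of the radial bump from the proof of Proposition~\ref{prop:steady-ign-bi-sub}, but with the coefficient $\tfrac{d}{R_0 + y}$ of the first-order term replaced by $\tfrac{d + 1}{R_0 + y}$ (equivalently, building the profile as if one extra transverse dimension were present). Solving the auxiliary ODE for $\ti\phi$ and matching as before yields, for $R_0$ large, a radial function $v$ on $\R^2$ equal to $\vartheta + \delta$ on a ball $B_{R_0}$, vanishing outside a ball $B_R$ with $R \coloneqq R_0 + K_0$, and satisfying $\Delta v + f(v) = -\tfrac1r \ti\phi' = \tfrac{|\ti\phi'|}{r} \ge 0$ on the transition annulus, together with $\Delta v + f(v) = f(\vartheta + \delta) > 0$ on $B_{R_0}$. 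This quantitative surplus persists against a sufficiently small first-order perturbation, so there is $\gamma = \gamma(s) > 0$, small, with $\Delta v + \gamma\, \partial_x v + f(v) \ge 0$ on all of $\R^2$; that is, $v$ is a subsolution of the drifted operator. I would then set $A_{\trm{lower}}(s) \coloneqq \max\{A_{\trm{ODE}},\, 2R\}$.

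Now let $a, L > A_{\trm{lower}}$. The ball $B_R$ centered at $(0, L/2)$ lies inside $\Omega_a$, so the translate $w \coloneqq v(\cdot - (0, L/2))$ vanishes on $\partial \Omega_a$; hence $w$ is a subsolution of \eqref{eq:TW-box-ign-bi} lying below its boundary data (on the Robin portion it satisfies the boundary relation trivially, being identically zero there). Evolving the parabolic version of \eqref{eq:TW-box-ign-bi} from $w$ then produces an orbit that is increasing in time and bounded above by the supersolution $1$, and thus converges to a steady state $\ge w$; by the uniqueness in Lemma~\ref{lem:box-ign-bi} this steady state is $\Phi^{a,\gamma}$. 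Therefore $\Phi^{a,\gamma} \ge w$, and in particular
\[
  \Phi^{a,\gamma}(0, L/2) \ge w(0, L/2) = \vartheta + \delta > s,
\]
as claimed.

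The crux is the modified radial construction. The bump of Proposition~\ref{prop:steady-ign-bi-sub} solves $\Delta v + f(v) = 0$ exactly on its transition annulus, so it leaves no room to absorb the term $\gamma\, \partial_x v$; raising the curvature coefficient supplies the positive surplus $\tfrac{|\ti\phi'|}{r}$ there, and this works uniformly in the bistable case (where $f < 0$ for small values) because the surplus is computed before one ever invokes the sign of $f$. This surplus is precisely what forces $\gamma$ to be small — it must be comparable to the surplus divided by $\|\nabla v\|_\infty$ — matching the heuristic that too strong a drift would carry the bump off before it can anchor the solution. The remaining care goes into the matching layer near $r = R_0$, where the surplus and $\partial_x v$ both degenerate (at the same rate, which is what makes the estimate close), into verifying that $\ti\phi$ still reaches zero with $\ti\phi' < 0$ after the change of coefficient (a stability-of-ODEs argument identical to the one in Proposition~\ref{prop:steady-ign-bi-sub}), and into the finitely many corners of $f$ when estimating $\Delta v + f(v)$.
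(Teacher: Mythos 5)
Your proposal is correct and follows essentially the same strategy as the paper: construct a radial compactly supported subsolution by adapting the bump from Proposition~\ref{prop:steady-ign-bi-sub} to tolerate a small drift, then slide/evolve to force $\Phi^{a,\gamma}$ above it at the center of $\Omega_a$. The only (cosmetic) difference is in how the drift is absorbed: the paper puts $\gamma$ directly into the profile ODE, using the coefficient $\gamma + \tfrac{1}{R_0+y}$, which makes the subsolution inequality $\ti\phi'\bigl(c\tfrac{x}{r}-\gamma\bigr)\ge 0$ hold automatically for all $c\in[0,\gamma]$; you instead pad the coefficient to $\tfrac{2}{R_0+y}$ to manufacture the surplus $\tfrac{|\ti\phi'|}{r}$ and then take $\gamma \le 1/R$ so that it dominates $\gamma\,\partial_x v$, which amounts to the same thing.
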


\begin{proof}
  We wish to construct a compactly supported subsolution which attains the value $s$.
  It thus suffices to consider the Dirichlet case $\varrho = 0$ and $s \in (\vartheta, 1)$.

  We follow the construction of subsolutions in the proof of Theorem~\ref{thm:steady}\ref{item:steady-ign-bi}.
  There, we used $\mr{\phi}$ solving
  \begin{equation*}
    \mr{\phi}'' + f(\mr{\phi}) = 0, \quad \mr{\phi}(0) = s, \quad \mr{\phi}'(0) = 0,
  \end{equation*}
  which hits zero at some position $\mr K > 0$.
  To adapt this to two dimensions with a small drift, we consider
  \begin{equation*}
    \ti \phi'' + \left(\gamma + \frac{1}{R_0 + y}\right) \ti \phi' +  f(\ti \phi) = 0, \quad \ti \phi(0) = s, \quad \ti \phi'(0) = 0.
  \end{equation*}
  By the stability of ODEs, there exist $\gamma, R_0 > 0$ such that $\ti \phi$ still hits $0$ at some position $K_0 > 0$.

  Using $\ti \phi$, we construct a radial subsolution $v$.
  Let $r \coloneqq |\tbf{x} - L\tbf{e}_y/2|$ denote the radial coordinate centered at $(0, L/2)$, and define
  \begin{equation*}
    v(r) \coloneqq
    \begin{cases}
      s & \textrm{for } r \leq R_0,\\
      \ti\phi\big(r - R_0\big) & \textrm{for } r \in (R_0, R_0 + K_0),\\
      0 & \textrm{for } r \geq R_0 + K_0.
    \end{cases}
  \end{equation*}

  Let $A_{\trm{lower}} \coloneqq \max\{2(R_0 + K_0), A_{\trm{ODE}}\}$.
  Then $v$ is a compactly supported subsolution to \eqref{eq:TW-box-ign-bi} when $a, L > A_{\trm{lower}}$ and $c \in [0, \gamma]$.
  Since $\Phi^{a, c}$ is the unique solution to \eqref{eq:TW-box-ign-bi}, we have $\Phi^{a, c} > v$.
  In particular, $\Phi^{a, \gamma}(0, L/2) > v(0, L/2) = s$.
\end{proof}

Now fix $\theta_0 \in (\vartheta, 1)$ and define $\gamma \coloneqq \gamma(\theta_0)$ and
\begin{equation*}
  A_{\trm{pin}} \coloneqq \max\left\{A_{\trm{upper}}(\theta_0), A_{\trm{lower}}(\theta_0)\right\}.
\end{equation*}
Then Lemmas~\ref{lem:upper} and \ref{lem:lower} imply that
\begin{equation*}
  \Phi^{a, c_*}(0, L/2) < \theta_0 < \Phi^{a, \gamma}(0, L/2)
\end{equation*}
for all $a, L > A_{\trm{pin}}$.
By Corollary~\ref{cor:monotone-in-c}, $\Phi^{a, c}(0, L/2)$ is monotone and continuous in $c$.
Hence, there exists $c^a \in (\gamma, c_*)$ such that
\begin{equation}
  \label{eq:pin}
  \Phi^{a, c^a}(0, L/2) = \theta_0.
\end{equation}

We now take $a \to \infty$.
By standard elliptic estimates, there exists a locally uniform subsequential limit $(\Phi_L, c_L)$ of $\big(\Phi^{a, c^a}, c^a\big)$ for each $L > A_{\trm{pin}}$.
\begin{proposition}
  \label{prop:strip-TW}
  Let $f$ be ignition with $\varrho \in [0, \infty)$ or bistable with $\varrho = 0$.
  Then there exists $A_{\trm{TW}} \geq \max\big\{A_{\trm{pin}}, A_{\m H}\big\}$ such that for all $L > A_{\trm{TW}},$ the subsequential limit $(\Phi_L, c_L)$ solves \eqref{eq:strip-TW} and satisfies $\partial_x \Phi_L < 0$ and $c_L \in [\gamma, c_*]$.
  Moreover, if $\varrho = 0$, then $\Phi_L$ is symmetric in $y$ about $y = L/2$ and $\partial_y \Phi_L > 0$ on $\R \times (0, L/2)$.
\end{proposition}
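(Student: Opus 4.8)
The plan is to verify the stated properties of the subsequential limit $(\Phi_L,c_L)$ in four steps, the delicate one being the identification of the right-hand limit $\Phi_L(+\infty,\anon)$.

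\emph{Passing to the limit.} Along the subsequence of the statement I have $c^a\to c_L\in[\gamma,c_*]$ (so this inclusion is automatic) and, by interior and boundary elliptic estimates together with Arzel\`a--Ascoli, $\Phi^{a,c^a}\to\Phi_L$ in $\m C^2_{\trm{loc}}$ over the rectangles $\Omega_a$, which exhaust $\R\times(0,L)$. The limit solves $\Delta\Phi_L+c_L\partial_x\Phi_L+f(\Phi_L)=0$ in $\R\times(0,L)$ with the lower Robin/Dirichlet condition and the upper Dirichlet condition, and Lemma~\ref{lem:box-ign-bi} together with \eqref{eq:pin} gives $0\le\Phi_L\le\varphi_L$, $\partial_x\Phi_L\le0$, and $\Phi_L(0,L/2)=\theta_0$. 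I enlarge $A_{\trm{TW}}$ so that $\sup_{[0,L]}\psi_L<\theta_0<\varphi_L(L/2)$ for $L>A_{\trm{TW}}$; this is possible by \eqref{eq:strip-steady-convergence} and \eqref{eq:strip-psi-vartheta} since $\theta_0\in(\vartheta,1)$. In particular $\Phi_L$ is nonconstant in $x$, so the strong maximum principle applied to the linear equation satisfied by $\partial_x\Phi_L$ yields $\partial_x\Phi_L<0$ in $\R\times(0,L)$; and when $\varrho=0$ the evenness of $\Phi^{a,c^a}$ about $y=L/2$ and the bound $\partial_y\Phi^{a,c^a}>0$ on $(-a,a)\times(0,L/2)$ pass to the limit, another application of the strong maximum principle giving $\partial_y\Phi_L>0$ on $\R\times(0,L/2)$.

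\emph{The one-sided limits.} Since $\Phi_L$ is bounded and nonincreasing in $x$, the pointwise limits $\Phi_L^\pm(y):=\lim_{x\to\pm\infty}\Phi_L(x,y)$ exist; translating in $x$ and invoking elliptic estimates shows the convergence is in $\m C^2_{\trm{loc}}$, hence uniform on the compact interval $[0,L]$, and that $\Phi_L^\pm$ solve the ODE \eqref{eq:strip-ODE}. By Lemma~\ref{lem:strip-ODE-ign-bi} each of $\Phi_L^\pm$ equals $0$, $\psi_L$ or $\varphi_L$, with $\Phi_L^-\ge\Phi_L^+$. Evaluating at $y=L/2$ and using $\Phi_L^-(L/2)\ge\theta_0>\sup_{[0,L]}\psi_L$ forces $\Phi_L^-=\varphi_L$; and $\Phi_L^+=\varphi_L$ is impossible, since $\Phi_L\le\varphi_L$ and monotonicity in $x$ would then give $\Phi_L\equiv\varphi_L$, contradicting $\Phi_L(0,L/2)=\theta_0<\varphi_L(L/2)$. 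Thus $\Phi_L^+\in\{0,\psi_L\}$, and everything comes down to excluding $\Phi_L^+=\psi_L$.

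\emph{Excluding $\psi_L$, and finishing.} Suppose $\Phi_L^+=\psi_L$. The point is that the Dirichlet condition $\Phi^{a,c^a}(a,\anon)=0$ at the right edge of $\Omega_a$ then forces a \emph{second} transition, from $\psi_L$ down to $0$, which can be recovered in a limit. Let $\xi^a$ be the unique crossing of the value $\vartheta/2$ by $\Phi^{a,c^a}(\anon,L/2)$ (unique since $\partial_x\Phi^{a,c^a}<0$, and present since the profile runs from $\varphi_L(L/2)$ down to $0$ and $\vartheta>0$). Uniform gradient estimates give $a-\xi^a\ge c_0>0$, while $\Phi_L^+=\psi_L$ and $\psi_L(L/2)>\vartheta$ force $\xi^a\to\infty$; passing to a further subsequence, either $a-\xi^a\to\infty$ or $a-\xi^a$ has a finite limit. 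In the first case $\Phi^{a,c^a}(\anon+\xi^a,\anon)$ converges in $\m C^2_{\trm{loc}}$ to a solution $\Psi$ of the travelling-wave equation with speed $c_L$ on all of $\R\times(0,L)$, nonincreasing in $x$, with $\Psi(0,L/2)=\vartheta/2$; monotonicity of $\Phi^{a,c^a}$ in $x$ together with $\Phi_L^+=\psi_L$ and the definition of $\xi^a$ pins $\Psi(-\infty,\anon)=\psi_L$ and $\Psi(+\infty,\anon)=0$. In the second case, recentring at the right edge, $\Phi^{a,c^a}(\anon+a,\anon)$ converges to such a $\Psi$ on the half-strip $(-\infty,0]\times(0,L)$ with $\Psi\equiv0$ on $\{0\}\times(0,L)$, $\partial_x\Psi\le0$, $\Psi\not\equiv0$, and $\Psi(-\infty,\anon)=\psi_L$. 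Testing the equation for $\Psi$ against $\partial_x\Psi$, integrating over cross-sections and integrating by parts in $y$, one obtains
\[
  \m E(x):=\int_0^L\Big[\tfrac12|\partial_x\Psi|^2-\tfrac12|\partial_y\Psi|^2+F(\Psi)\Big]\dn y-\frac{1}{2\varrho}\Psi(x,0)^2,\qquad \m E'(x)=-c_L\int_0^L|\partial_x\Psi|^2\dn y\le0
\]
with $F$ as in \eqref{eq:energy} (the boundary term absent when $\varrho=0$), so $\m E$ is nonincreasing. But $\m E(-\infty)=-\tfrac12\m H(\psi_L)-\tfrac{1}{2\varrho}\psi_L(0)^2<0$, because $L>A_{\trm{TW}}\ge A_{\m H}$ and so $\m H(\psi_L)>0$ by Lemma~\ref{lem:stability}, whereas $\m E(+\infty)=-\tfrac12\m H(0)=0$ in the first case and $\m E(0^-)=\liminf_{x\to0^-}\m E(x)\ge0$ in the second (the other terms vanishing as $\Psi(0,\anon)\equiv0$). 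Either way the monotonicity of $\m E$ is violated, a contradiction. Hence $\Phi_L^+=0$, so $(\Phi_L,c_L)$ solves \eqref{eq:strip-TW}, and taking $A_{\trm{TW}}\ge\max\{A_{\trm{pin}},A_{\m H}\}$ completes the proof.

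I expect the third step to be the main obstacle: one must extract the second transition from the box solutions in a way that rigorously identifies its two end-states as $\psi_L$ and $0$ (and treat the degenerate case where that transition is squeezed against the boundary), and only then does the stability input $\m H(\psi_L)>\m H(0)$ from Lemma~\ref{lem:stability} close the argument.
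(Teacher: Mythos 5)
Your proposal is correct, and the central step — ruling out $\Phi_L(+\infty,\anon)=\psi_L$ — follows a genuinely different route from the paper's.

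The paper excludes $\psi_L$ by a supersolution argument: it introduces a reduced reaction $\ti f\leq f$ cut off slightly above $\vartheta$ (so that $\int_0^{\ti\theta}\ti f\approx\int_0^\vartheta f=0$), whose one-dimensional wave $\ti U$ has speed $\ti c\to 0$ as $L\to\infty$, hence $\ti c\leq\gamma\leq c^a$. It then slides $\ti U(\anon-x_a)$ from $x_a=a$ down to $x_a=B$ over the half-strip $[B,a]\times[0,L]$ to dominate $\Phi^{a,c^a}$, where $B$ is chosen via Dini and local uniform convergence so that $\Phi^{a,c^a}(B,\anon)<\ti U(0)=\max\psi_L+\tfrac{2\eta}{3}$; sending $a\to\infty$ gives $\Phi_L\leq\ti U(\anon-B)\to 0$. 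You instead argue by contradiction: assuming $\Phi_L^+=\psi_L$, you extract a \emph{second} transition $\Psi$ from the box solutions by recentering at the level-$\vartheta/2$ crossing $\xi^a$ of $\Phi^{a,c^a}(\anon,L/2)$, then run the Hamiltonian monotonicity $\m E'(x)=-c_L\int_0^L|\partial_x\Psi|^2\leq 0$ against $\m E(-\infty)=-\tfrac12\m H(\psi_L)-\tfrac{1}{2\varrho}\psi_L(0)^2<0\leq\m E(+\infty)$ (or $\m E(0)$ in the degenerate case), closing with $\m H(\psi_L)>0$ from Lemma~\ref{lem:stability}. This is closer in spirit to Vega's stability machinery, which the paper only invokes afterwards for uniqueness in Lemma~\ref{lem:TW-unique}; you effectively promote that stability input from a uniqueness tool to an existence tool, bypassing the construction of $\ti f$, $\ti U$, and $\ti c$ altogether. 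The trade-off is that you must handle the two-case extraction of $\Psi$ (whether $a-\xi^a$ diverges or stays bounded), verify the uniform gradient bound giving $a-\xi^a\geq c_0>0$, and check $\psi_L(L/2)>\vartheta/2$ for $L$ large (immediate for $\varrho=0$ by symmetry; for Robin ignition it follows since $K^\al\approx L^\al/2$ and $s^\al\to\vartheta$) — you flag these yourself and they do work out. Both proofs ultimately rest on the same structural fact, $\int_0^\vartheta f=0$, but encode it differently: the paper through the vanishing of $\ti c$, you directly through the sign of $\m H(\psi_L)$.
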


\begin{proof}
  By elliptic regularity and Lemma~\ref{lem:box-ign-bi}, the only question is the limiting behavior as $x \to \pm \infty$.
  The wave $\Phi_L$ is monotone decreasing in $x$, so these limits exist and solve the steady-state equation \eqref{eq:strip-ODE}.
  The choice \eqref{eq:pin} ensures that $\Phi_L(0, L/2) = \theta_0$.
  Furthermore, by Lemma~\ref{lem:strip-ODE-ign-bi}, there exists $A_{\trm{TW}} \geq \max\big\{A_{\trm{pin}}, A_{\m H}\big\}$ such that $\psi_L(L/2) < \theta_0 < \varphi_L(L/2)$ for all $L > A_{\trm{TW}}$.
  It follows that ${\Phi_L(-\infty, \anon) = \varphi_L}$.
  However, the right limit is still in doubt: $\Phi_L(+\infty, \anon) = \psi_L$ or $0$.
  This is one of the most delicate issues in the paper.

  Before grappling with the question directly, we define a reduced reaction $\ti{f}$.
  Let $\eta \coloneqq \max \psi_L - \vartheta > 0$.
  By Lemma~\ref{lem:strip-ODE-ign-bi}, we can make $\eta$ arbitrarily small by taking $L$ large.
  Increasing $A_{\trm{TW}}$ if need be, we can assume that $\ti \theta \coloneqq \vartheta + 2\eta < 1.$
  Now set $\ti{f} = f$ on $[0, \ti{\theta} - \eta/3]$ and let $\ti f$ smoothly connect to $0$ on $[\ti{\theta} - \eta/3, \ti{\theta}]$ while remaining below $f$.
  Then $\ti{f} \leq f$ is an ignition or bistable reaction (in accordance with $f$) on the restricted interval $[0, \ti{\theta}]$.
  Let $\ti{c}$ denote its one-dimensional wave speed with wave $\ti{U}$ connecting $\ti{\theta}$ to $0$, so that
  \begin{equation*}
    \ti U'' + \ti c \ti U' + \ti f\big(\ti U\big) = 0, \quad \ti U(-\infty) = \ti \theta, \And \ti U(+\infty) = 0.
  \end{equation*}
  We are free to fix the translation of $\ti{U}$ so that
  \begin{equation*}
    \ti{U}(0) = \max \psi_L + \frac{2\eta}{3} = \ti{\theta} - \frac{\eta}{3}.
  \end{equation*}

  We wish to use $\ti{U}$ as a supersolution to force $\Phi_L$ to converge to $0$ as ${x \to + \infty}$.
  However, traveling waves are only supersolutions when viewed in a \emph{faster} frame.
  We must therefore arrange $\ti c \leq c_L$.
  To do so, recall that $\ti f, \ti \theta$, and $\ti U$ depend implicitly on $L$ through the small parameter $\eta = \max \psi_L - \vartheta$.
  By Lemma~\ref{lem:strip-ODE-ign-bi}, $\eta \to 0$ as $L \to \infty$.
  Then $\ti \theta \to \vartheta$ and
  \begin{equation*}
    \ti f(s) \to \tbf{1}_{[0, \vartheta]}(s) f(s) \ForAll s \in [0, 1].
  \end{equation*}
  Recall that $\int_0^\vartheta f = 0$.
  This implies that the ``traveling wave'' with reaction $\tbf{1}_{[0, \vartheta]} f$ connecting $\vartheta$ to $0$ is actually stationary.
  By the continuity of one-dimensional waves, we obtain $\ti c \to 0$ as $L \to \infty$.
  In particular, increasing $A_{\trm{TW}}$ if need be, we can assume that $\ti c \leq \gamma \leq c_L$.

  We can now control our traveling wave $\Phi_L$.
  As shown earlier, the right limit $\Phi_L(+\infty, \anon)$ is $\psi_L$ or $0$.
  By Dini's theorem, the convergence is uniform.
  Hence, there exists $B \in \R$ such that
  \begin{equation}
    \label{eq:Dini}
    \sup_{y \in [0, L]} \left[\Phi_L(B, y) - \psi_L(y)\right] < \frac{\eta}{3}.
  \end{equation}
  Now, the convergence $\Phi^{a, c^a} \to \Phi_L$ as $a \to \infty$ is locally uniform (along a subsequence, which we suppress for clarity).
  Thus, there exists $A > B$ such that
  \begin{equation}
    \label{eq:locally-uniform}
    \sup_{y \in [0, \, L]}\abs{\Phi^{a, c^a}(B, y) - \Phi_L(B, y)} < \frac{\eta}{3}
  \end{equation}
  when $a > A$.
  Combining \eqref{eq:Dini} and \eqref{eq:locally-uniform}, the triangle inequality yields
  \begin{equation*}
    \sup_{y \in [0, \, L]} \Phi^{a, c^a}(B, y) < \max \psi_L + \frac{2\eta}{3} = \ti{U}(0).
  \end{equation*}
  Since $\partial_x \Phi^{a, c^a} < 0$, this implies that
  \begin{equation}
    \label{eq:right-bound}
    \sup_{[B, \, a] \times [0, \, L]} \Phi^{a, c^a} < \ti{U}(0).
  \end{equation}
  By construction, $f = \ti{f}$ on the interval $\big[0, \ti{U}(0)\big]$.
  Thus, the approximate wave $\Phi^{a, c^a}$ satisfies
  \begin{equation*}
    \Delta \Phi^{a, c^a} + c^a \partial_x \Phi^{a, c^a} + \ti f\big(\Phi^{a, c^a}\big) = 0 \quad \textrm{in } (B, a) \times (0, L).
  \end{equation*}
  Moreover, $\ti{c} \leq \gamma \leq c^a$ and $\ti U' < 0$ imply that $\ti{U}$ is a supersolution to this equation.

  Since $\ti U'$ is decreasing, \eqref{eq:right-bound} implies that
  \begin{equation}
    \label{eq:wave-sliding}
    \ti{U}(x - x_a) \geq \Phi^{a, c^a}(x, y) \ForAll (x, y) \in [B, a] \times [0, L]
  \end{equation}
  and each $x_a \geq a \geq A$.
  We now use the sliding method to continuously reduce the shift $x_a$ to $B$.
  When ${x_a > B}$, \eqref{eq:right-bound} and the Hopf lemma imply that $U(\anon - x_a)$ cannot touch $\Phi^{a, c^a}$ on the boundary of $[B, a] \times [0, L]$.
  Moreover, the sliding supersolution cannot touch the solution in the interior, by the strong maximum principle.
  Hence \eqref{eq:wave-sliding} holds for all $x_a \geq B$.
  In particular,
  \begin{equation*}
    \ti{U}(x - B) \geq \Phi^{a, c^a}(x, y) \ForAll (x, y) \in [B, a] \times [0, L]
  \end{equation*}
  whenever $a \geq A$.
  We emphasize that the left side is independent of $a$.
  It follows that
  \begin{equation*}
    \ti{U}(x - B) \geq \Phi_L(x, y) = \lim_{a \to \infty} \Phi^{a, c^a}(x, y) \ForAll (x, y) \in [B, \infty) \times [0, L].
  \end{equation*}
  Of course, $\ti{U}(+\infty) = 0$, so $\Phi_L(+\infty, \anon) = 0$ as desired.
\end{proof}

\subsection{Properties}

Now that we have a traveling wave, results of Vega \cite{Vega} imply uniqueness.
\begin{lemma}
  \label{lem:TW-unique}
  Let $f$ be ignition with $\varrho \in [0, \infty)$ or bistable with $\varrho = 0$.
  Then for all $L > A_{\trm{TW}}$, there is a unique speed $c_L$ such that \eqref{eq:strip-TW} admits a solution $\Phi_L$ which is monotone in $x$.
  Furthermore, $\Phi_L$ is unique up to translation.
\end{lemma}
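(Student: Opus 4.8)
The plan is to deduce this lemma from the uniqueness theory of Vega~\cite{Vega} for monotone traveling fronts in a cylinder. Our strip $\R \times (0, L)$ plays the role of the cylinder: the cross-section $(0, L)$ is compact, the equation is invariant under translation in $x$, and the lateral conditions (Robin at $y = 0$, Dirichlet at $y = L$) are of the type accommodated by the sliding and energy methods. What must be checked is that the two states connected by \eqref{eq:strip-TW}, namely $\varphi_L$ at $x = -\infty$ and $0$ at $x = +\infty$, are linearly stable steady states of \eqref{eq:strip-ODE}, while its only other steady state $\psi_L$ is unstable. This is furnished by the preceding results: Lemma~\ref{lem:strip-ODE-ign-bi} says the steady states are exactly $0$, $\psi_L$, $\varphi_L$, and Lemma~\ref{lem:stability} gives the energy ordering $\m H(\varphi_L) < \m H(0) < \m H(\psi_L)$ for $L > A_{\m H}$, so $0$ and $\varphi_L$ are strict local minimizers of $\m H$ and $\psi_L$ is the intermediate unstable state. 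Linearized stability of $\varphi_L$ and $0$ — i.e. strict negativity of the principal eigenvalue of the linearization of \eqref{eq:strip-ODE} about each — then uses $f'(1^-) < 0$ near $\varphi_L$, and near $0$ uses $f'(0^+) < 0$ in the bistable case or, in the ignition case (where $f \equiv 0$ near $0$), the strict negativity of the principal Dirichlet/Robin eigenvalue of $\partial_{yy}$ on $(0, L)$. Since Proposition~\ref{prop:strip-TW} already supplies one monotone solution $(\Phi_L, c_L)$ with $c_L \in [\gamma, c_*]$, Vega's theorem gives that $c_L$ is the only admissible speed and that $\Phi_L$ is unique up to translation in $x$.

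For completeness I would recall the mechanism, which is the sliding method of~\cite{BN} combined with exponential tail estimates. Given two monotone solutions $(\Phi, c)$ and $(\ti\Phi, \ti c)$ of \eqref{eq:strip-TW} with $c \leq \ti c$, the identity $\Delta\ti\Phi + c\,\partial_x\ti\Phi + f(\ti\Phi) = (c - \ti c)\,\partial_x\ti\Phi \geq 0$ (using $\partial_x\ti\Phi < 0$) shows $\ti\Phi$ is a subsolution of the $c$-equation, strictly so when $c < \ti c$. Linearized stability forces $\varphi_L - \Phi$, $\varphi_L - \ti\Phi$ to decay like simple exponentials $e^{\mu(c)x}$, $e^{\mu(\ti c)x}$ as $x \to -\infty$ and $\Phi$, $\ti\Phi$ to decay like $e^{-\lambda(c)x}$, $e^{-\lambda(\ti c)x}$ as $x \to +\infty$, with $\mu$ decreasing and $\lambda$ increasing in the speed. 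When $c < \ti c$ these strict inequalities of rates put every horizontal translate of $\ti\Phi$ below $\Phi$ near both ends $x = \pm\infty$, while $\ti\Phi(\anon + h, \anon) \to \varphi_L > \Phi$ at each interior point as $h \to -\infty$; hence the least shift $\sigma$ with $\ti\Phi(\anon + \sigma, \anon) \leq \Phi$ everywhere is finite and any failure for $h < \sigma$ is at a finite point. At $h = \sigma$, the strong maximum principle and the Hopf lemma — applied as in the proofs of Lemma~\ref{lem:box-ign-bi} and Proposition~\ref{prop:strip-TW} — prevent $\ti\Phi(\anon + \sigma, \anon)$ from touching $\Phi$ at a finite interior or lateral point unless the two coincide, which is impossible for a strict subsolution; this contradiction yields $c = \ti c$. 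With equal speeds, the same sliding produces a nonnegative solution $\Phi - \ti\Phi(\anon + \sigma, \anon)$ of a linear elliptic equation that the strong maximum principle forces to vanish identically, giving uniqueness up to translation.

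The main obstacle, and the reason to invoke~\cite{Vega} rather than reprove everything, is the asymptotic analysis as $x \to \pm\infty$ — especially at $x = +\infty$ for ignition reactions, where $f \equiv 0$ near $0$, the linearization at $0$ reduces to $\Delta + c\,\partial_x$ with the lateral conditions, the decay rate $\lambda(c)$ genuinely depends on the speed, and one must run the comparison with sub- and supersolutions of the form $e^{-\lambda x}\chi(y)$ built from the mixed Robin/Dirichlet cross-section; it is precisely the monotonicity in $x$ that guarantees these one-sided exponential tails, and it is precisely the simplicity of the principal eigenvalues at $\varphi_L$ and $0$ that makes the tail comparison work and, in the equal-speed case, excludes contact occurring only asymptotically. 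This is the framework of~\cite{Vega}, so the genuine work is confined to checking that our strip problem meets its structural hypotheses, which, as noted above, comes down to the stability statements of Lemmas~\ref{lem:strip-ODE-ign-bi} and~\ref{lem:stability}.
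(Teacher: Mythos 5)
You take the same route as the paper: both invoke Vega's Theorems 5.1 and 5.2 after verifying (via Lemma~\ref{lem:strip-ODE-ign-bi}) that $0$, $\psi_L$, $\varphi_L$ are the only steady states and (via Lemma~\ref{lem:stability}) that $\m H(\varphi_L) < \m H(0) < \m H(\psi_L)$. The paper's proof stops essentially there, adding only that Vega's sign convention must be reversed (replace $\Phi_L$ by $-\Phi_L$ and $f(s)$ by $-f(-s)$) and that his Dirichlet proofs extend unchanged to Robin conditions.

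Two corrections to your elaboration, neither of which affects the conclusion. First, the energy ordering does not by itself make $0$ and $\varphi_L$ strict local minimizers of $\m H$; what Vega's hypothesis consumes is the ordering $\m H(\varphi_L) < \m H(0) < \m H(\psi_L)$ itself, not a minimizer property. Second, linearized stability of $\varphi_L$ does not follow from $f'(1^-) < 0$ alone: the profile $\varphi_L$ ranges over essentially all of $(0, 1)$, so $f'(\varphi_L(y))$ takes both signs, and $\varphi_L'$ --- the natural zero-eigenvalue candidate --- does not satisfy the boundary conditions of the linearized operator, so the principal eigenvalue is not controlled this way. The load-bearing input is the energy ordering of Lemma~\ref{lem:stability}, which is exactly what the paper checks; the linearized-stability discussion is an unneeded (and unjustified) detour.
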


\begin{proof}
  We use the results of \cite{Vega}.
  There, the behavior of waves hinges on the stability of the limiting steady states, as measured by the energy $\m H$ defined in \eqref{eq:energy}.
  Since $L > A_{\trm{TW}} \geq A_{\trm{ODE}}$, the only solutions of \eqref{eq:strip-TW} are $\varphi_L, \psi_L$, and $0$.
  By Lemma~\ref{lem:stability} and $L > A_{\trm{TW}} \geq A_{\m H}$, they satisfy
  \begin{equation*}
    \m H(\varphi_L) < \m H(0) < \m H(\psi_L).
  \end{equation*}
  By Theorems 5.1 and 5.2 in \cite{Vega}, \eqref{eq:strip-TW} admits a monotone solution $\Phi_L$ at a unique speed $c_L$, and $\Phi_L$ is unique up to translation.
  We note that \cite{Vega} assumes for convenience that $\Phi_L(-\infty, \anon) < \Phi_L(+\infty, \anon)$.
  We can easily arrange this by replacing $\Phi_L$ by $-\Phi_L$ and $f(s)$ by $-f(-s)$.
  Also, \cite{Vega} only handles Dirichlet conditions.
  However, as Vega notes, the proofs extend to Robin conditions without change.
\end{proof}

Next, we show that $c_L$ converges to the one-dimensional wave speed $c_*$ as ${L \to \infty}$.
\begin{lemma}
  \label{lem:speed-convergence}
  Let $f$ be ignition with $\varrho \in [0, \infty)$ or bistable with $\varrho = 0$.
  Then $c_L \to c_*$ as $L \to \infty$.
\end{lemma}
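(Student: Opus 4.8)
The plan is as follows. By Proposition~\ref{prop:strip-TW} we already know $c_L \in [\gamma, c_*]$, so it suffices to prove $\liminf_{L \to \infty} c_L \geq c_*$, and for this it is enough to show that every subsequential limit of $(c_L)$ equals $c_*$. So suppose, for contradiction, that $c_{L_n} \to c_\infty < c_*$ along some sequence $L_n \to \infty$.

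First I would normalize and pass to a limit. Fix $y_0 > 0$ with $\varphi(y_0) > \theta_0$. Since $\varphi_{L_n} \to \varphi$ uniformly on $[0, L_n/2]$ by Lemma~\ref{lem:strip-ODE-ign-bi}, and $\Phi_{L_n}(\anon, y_0)$ decreases strictly from $\varphi_{L_n}(y_0) > \theta_0$ to $0$ for $n$ large, I may translate each $\Phi_{L_n}$ in $x$ so that $\Phi_{L_n}(0, y_0) = \theta_0$; this preserves all the qualitative properties in Proposition~\ref{prop:strip-TW}. By interior and boundary elliptic estimates, a subsequence of the translated waves converges, together with its first and second derivatives, locally uniformly to a limit $\Phi_\infty$ on the full half-space $\bar\hp = \R \times [0,\infty)$ — the upper Dirichlet boundary $\{y = L_n\}$ recedes to infinity. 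Then $\Phi_\infty$ solves $\Delta \Phi_\infty + c_\infty \partial_x \Phi_\infty + f(\Phi_\infty) = 0$ in $\hp$ with the boundary condition of \eqref{eq:main} on $\partial\hp$, and satisfies $0 \leq \Phi_\infty \leq \varphi$, $\partial_x \Phi_\infty \leq 0$, $\Phi_\infty(0, y_0) = \theta_0$, and, when $\varrho = 0$, $\partial_y \Phi_\infty \geq 0$.

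Next I would identify the limiting profiles. Because $\Phi_\infty$ is monotone in $x$, the limits $g_\pm(y) := \Phi_\infty(\pm\infty, y)$ exist and are bounded solutions of the half-line ODE \eqref{eq:steady-ODE}. Since $g_-(y_0) \geq \Phi_\infty(0, y_0) = \theta_0 > 0$, the profile $g_-$ is nonzero, so Lemma~\ref{lem:steady-ODE-ign-bi} gives $g_- = \varphi$; in particular $\Phi_\infty \leq \varphi$ and $\Phi_\infty(-\infty, \anon) = \varphi$. By the same lemma $g_+ \in \{0, \varphi\}$, and $g_+ = \varphi$ would force $\Phi_\infty \equiv \varphi$, contradicting $\Phi_\infty(0, y_0) = \theta_0 < 1 = \varphi(+\infty)$; hence $g_+ = 0$. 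Therefore $\Phi_\infty$ is a nonconstant half-space traveling wave of speed $c_\infty$ connecting $\varphi$ to $0$; it depends on only two coordinates, is strictly decreasing in $x$ by the strong maximum principle, and, when $\varrho = 0$, is strictly increasing in $y$.

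To finish, I would reduce $\Phi_\infty$ to the one-dimensional wave and invoke the uniqueness of its speed. In the Dirichlet case, vertical monotonicity shows that $h(x) := \lim_{y \to \infty} \Phi_\infty(x, y)$ is, after passing to the limit in vertical translates, a bounded monotone solution of $h'' + c_\infty h' + f(h) = 0$ with $h(-\infty) = \sup \varphi = 1$. One then argues that $h(+\infty) = 0$: once $\Phi_\infty$ has dropped below $\theta$ (which happens for $x$ large if the right-hand limit $h(+\infty)$ lies below $\theta$) the equation linearizes, and an exponentially decaying supersolution on $[X_0, \infty) \times [0, \infty)$ vanishing on $\partial\hp$ forces $\Phi_\infty$, hence $h$, to decay to $0$. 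Consequently $h$ is the unique one-dimensional traveling wave joining $1$ to $0$, whose speed is $c_*$ \cite{AW, FM, Kanel1} — contradicting $c_\infty < c_*$. The crux of the argument is precisely this last step: ruling out that the reduced profile stalls at an intermediate zero of $f$ (where the linearization argument fails, e.g.\ near $\theta$), and handling the Robin case $\varrho > 0$, for which vertical monotonicity of $\Phi_L$ is not yet available — one option there is to pin $\Phi_{L_n}$ near $\partial\hp$ and use the Hopf lemma at the boundary, another is to first strengthen Lemma~\ref{lem:box-ign-bi} to obtain $\partial_y \Phi_L > 0$ when $\varrho > 0$.
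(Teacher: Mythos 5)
Your overall plan — pass to a limit along a minimizing sequence, identify its behavior at $x=\pm\infty$, and derive a contradiction with one-dimensional spreading — is in the right spirit, but your normalization leads you into a genuine gap, and the paper's choice of normalization is precisely what makes that gap disappear.

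You pin $\Phi_{L_n}$ at a \emph{fixed height} $y_0$ near $\partial\hp$, so your limit $\Phi_\infty$ lives on the half-space $\bar\hp$. You then propose to take $y\to\infty$ inside $\Phi_\infty$ to reduce to a one-dimensional wave $h$, and you correctly flag the sticking point: you have no way to exclude the ``conical'' alternative in which $h(x) = \lim_{y\to\infty}\Phi_\infty(x,y)$ is identically $1$, or more generally $h(+\infty)$ is a zero of $f$ at or above $\theta$. Your exponential-supersolution device only kicks in once you already know $h(+\infty) < \theta$, which is exactly the issue. Note also that you cannot shortcut by observing that $\Phi_\infty$ is a half-space wave of speed $c_\infty < c_*$ and citing nonexistence of slow waves (Theorem~\ref{thm:TW}): that theorem is proved from Theorem~\ref{thm:ASP}, which rests on the very lemma you are proving, so the argument would be circular. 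Finally, for the Robin ignition case you recognize that vertical monotonicity of $\Phi_L$ is not available, and you only suggest where one might look for it.

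The paper instead recenters at $y = L/2$, the middle of the strip, via $\Psi_L(x,y) := \Phi_L(x, y+L/2)$. As $L\to\infty$ both boundaries recede to infinity, so a subsequential limit $\Psi$ solves $\Delta\Psi + c_-\partial_x\Psi + f(\Psi) = 0$ in the \emph{whole plane} $\R^2$, with $\Psi(0,0)=\theta_0$ and $\partial_x\Psi < 0$. Now the left and right limits $\Psi(\pm\infty,\cdot)$ solve $\phi'' + f(\phi) = 0$ on all of $\R$, whose bounded solutions are constants for ignition $f$ (by the first integral) and constants or periodic orbits for bistable $f$; the periodic alternative is eliminated using evenness and monotonicity in $y$, which is available since the bistable case has $\varrho = 0$. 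This gives $\Psi(-\infty,\cdot) \equiv 1$ immediately, with no need to reduce to a one-dimensional profile and hence no conical-wave issue to resolve, and no need for vertical monotonicity in the Robin ignition case. One then finds a large ball on which $\Psi \geq 1-\delta$, plants $(1-\delta)\mathbf{1}_B$ underneath, and lets Aronson--Weinberger whole-plane spreading at speed $c_*$ overtake the frame moving at $c_- < c_*$, contradicting $\Psi(0,0) = \theta_0 < 1$. Crucially, the paper never needs to know $\Psi(+\infty,\cdot)=0$: it only needs $\Psi(-\infty,\cdot)=1$ and $\Psi\not\equiv 1$. If you want to salvage your approach, the cleanest fix is to adopt the $y=L/2$ recentering; alternatively, you could run the Aronson--Weinberger comparison directly against your $\Phi_\infty$ in the half-space using the free-space subsolution from the proof of Proposition~\ref{prop:ASP-interior}, bypassing the one-dimensional reduction entirely.
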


\begin{proof}
  Define the recentered wave
  \begin{equation*}
    \Psi_L(x, y) \coloneqq \Phi_L\Big(x, y + \frac{L}{2}\Big).
  \end{equation*}
  Then \eqref{eq:pin} and Proposition~\ref{prop:strip-TW} imply that $\Psi_L(0, 0) = \theta_0$, $0 < \Psi_L < \varphi_L(\anon + L/2)$, and $\partial_x \Psi_L < 0$.
  If $\varrho = 0$, $\Psi_L$ is also even in $y$ and $\partial_y \Psi_L > 0$ when $y < 0$.

  Now let $L_k \to \infty$ be a sequence of lengths such that
  \begin{equation*}
    c_{L_k} \to c_- \coloneqq \liminf_{L \to \infty} c_L.
  \end{equation*}
  Note that, by Proposition~\ref{prop:strip-TW}, $c_- \in [\gamma, c_*]$.
  By standard elliptic estimates, there exists a subsequence (which we also call $L_k$ for simplicity) such that $\Psi_{L_k}$ has a locally uniform limit $\Psi$ in $\R^2.$
  Furthermore, $\Psi$ satisfies
  \begin{equation*}
    \Delta \Psi + c_- \partial_x \Psi + f(\Psi) = 0
  \end{equation*}
  as well as $\Psi(0, 0) = \theta_0$, $0 < \Psi < 1$, and $\partial_x \Psi < 0$.
  Moreover, if $\varrho = 0$, $\Psi$ is even in $y$ and $\partial_y \Psi > 0$ when $y < 0$.

  By the monotonicity in $x$, the limits $\Psi(\pm \infty, \anon)$ exist.
  Moreover, the convergence is locally uniform and the limits satisfy the steady-state equation $\phi'' + f(\phi) = 0$ in $\R$.
  The only solutions to this equation are constant or non-constant periodic, and the latter is only possible when $f$ is bistable.
  However, $\varrho = 0$ when $f$ is bistable.
  In this case, the limits $\Psi(\pm \infty, \anon)$ are also even and monotone increasing on $\R_-$, and thus must be constant.

  It follows that $\Psi(-\infty, \anon) = s_-$ for some $s_- \in [0, 1]$ such that $f(s_-) = 0$.
  Furthermore, $s_- > \Psi(0, 0) = \theta_0 > \vartheta$.
  The only zero of $f$ above $\vartheta$ is $1$, so $\Psi(-\infty, \anon) = 1$.
  Hence for any $R, \delta > 0$, there exists a radius-$R$ ball $B \subset \R^2$ such that ${\Psi|_B \geq 1 - \delta}$.
  Now let $w$ solve
  \begin{equation*}
    \begin{cases}
      \partial_t w = \Delta w + f(w),\\
      w(0, \tbf{x}) = (1 - \delta) \tbf{1}_B(\tbf{x})
    \end{cases}
  \end{equation*}
  in $\R^2$.
  Then the comparison principle implies that
  \begin{equation}
    \label{eq:whole-space-comparison}
    w(t, x, y) \leq \Psi(x - c_-t, y) \ForAll (t, x, y) \in [0, \infty) \times \R^2.
  \end{equation}
  However, in the whole space, Aronson and Weinberger \cite{AW} show that $w \to 1$ locally uniformly, provided $\delta \ll 1$ and $R \gg 1$.
  Furthermore, this disturbance propagates at the asymptotic speed $c_*$.
  Since $\Psi \not \equiv 1$, this will contradict \eqref{eq:whole-space-comparison} unless $c_- \geq c_*$.
  But $c_- \in [\gamma, c_*]$, so in fact $c_- = c_*.$
  Now
  \begin{equation*}
    c_* = \liminf_{L \to \infty} c_L \leq \limsup_{L \to \infty} c_L \leq c_*,
  \end{equation*}
  so $\lim_{L \to \infty} c_L = c_*$, as desired.
\end{proof}

\subsection{Modified traveling waves}

To close this section, we discuss a few variations on our traveling waves.

Throughout, we have exploited symmetry and monotonicity in $y$ when $\varrho = 0$.
When $f$ is ignition and $\varrho > 0$, our waves $\Phi_L$ are more complicated, since they involve Robin conditions on one boundary and Dirichlet conditions on the other.
In our proof of Theorem~\ref{thm:TW}\ref{item:TW-ign-bi}, we will need traveling waves for $\varrho > 0$ which are symmetric in $y$.
This is a simple matter of changing the upper boundary condition.
We therefore consider symmetric steady states satisfying
\begin{equation}
  \label{eq:strip-ODE-sym}
  \begin{gathered}
    (\phi_L^{\mathrm{sym}})'' + f(\phi_L^{\mathrm{sym}}) = 0,\\
    (\phi_L^{\mathrm{sym}})'(0) = \varrho^{-1} \phi_L^{\mathrm{sym}}(0), \quad (\phi_L^{\mathrm{sym}})'(L) = -\varrho^{-1} \phi_L^{\mathrm{sym}}(L).
  \end{gathered}
\end{equation}
These behave exactly like the asymmetric steady states.
\begin{lemma}
  Let $f$ be ignition with $\varrho > 0$.
  Then there exists $A_{\trm{ODE}}^{\mathrm{sym}} > 0$ such that  \eqref{eq:strip-ODE-sym} admits precisely two nonzero solutions $\varphi_L^{\mathrm{sym}} > \psi_L^{\mathrm{sym}}$ when $L > A_{\trm{ODE}}^{\mathrm{sym}}$.
  Furthermore, these solutions satisfy \eqref{eq:strip-steady-convergence} and \eqref{eq:strip-psi-vartheta}.
\end{lemma}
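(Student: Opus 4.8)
The plan is to reduce \eqref{eq:strip-ODE-sym} to the half-interval problem already analyzed in Lemma~\ref{lem:strip-ODE-ign-bi}. The key observation is that \emph{every} nonzero solution of \eqref{eq:strip-ODE-sym} is even about $y = L/2$. Arguing as in the proofs of Lemmas~\ref{lem:steady-ODE-ign-bi} and \ref{lem:strip-ODE-ign-bi}, a nonzero solution $\phi$ satisfies $\phi([0,L]) \subset (0, 1)$; in particular $\phi(0), \phi(L) > 0$. The quantity $\tfrac12 (\phi')^2 + F(\phi)$ is constant along any solution of $\phi'' + f(\phi) = 0$, and the two boundary conditions give $\phi'(0)^2 = \varrho^{-2}\phi(0)^2$ and $\phi'(L)^2 = \varrho^{-2}\phi(L)^2$. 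Hence $G(\phi(0)) = G(\phi(L))$ for $G(s) \coloneqq \tfrac12\varrho^{-2}s^2 + F(s)$. Since $f \geq 0$ for an ignition reaction, $G'(s) = \varrho^{-2}s + f(s) > 0$ on $(0, \infty)$, so $G$ is injective and $\phi(0) = \phi(L)$. Then $\phi'(0) = \varrho^{-1}\phi(0) = \varrho^{-1}\phi(L) = -\phi'(L)$, so $y \mapsto \phi(L - y)$ has the same initial data as $\phi$ at $y = 0$; by ODE uniqueness, $\phi(L - y) = \phi(y)$.

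Restriction to $[0, L/2]$ therefore puts nonzero solutions of \eqref{eq:strip-ODE-sym} in bijection with nonzero solutions of the Robin--Neumann problem $\phi'' + f(\phi) = 0$, $\phi'(0) = \varrho^{-1}\phi(0)$, $\phi'(L/2) = 0$ (the reflection of such a profile is $\m C^1$ across $y = L/2$, hence solves the ODE, and satisfies the Robin condition at $y = L$). In the shooting notation of Lemma~\ref{lem:strip-ODE-ign-bi}, such a solution is $\phi^\al|_{[0, L/2]}$ for some $\al \in (0, \bar\al)$ whose first critical point $K^\al$ equals $L/2$; the values $\al \leq 0$ give no nonzero solution, and $\al \geq \bar\al$ gives $K^\al = \infty$ since $\phi^\al$ then crosses $1$ with positive slope. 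Steps~1 and 3 of that proof show (in the ignition case) that $K^\al$ is strictly increasing for $\al$ near $\bar\al$, is strictly decreasing for $\al$ near $0$ --- indeed $K^\al = \theta/\al - \varrho + \ti K^\al$ with $\dot{\ti K}^\al = \m O(\al^{-1})$ by \eqref{eq:mono-length-small} --- is bounded on intermediate $\al$, and diverges to $\infty$ as $\al \to 0$ or $\al \to \bar\al$. Setting $A_{\trm{ODE}}^{\mathrm{sym}} \coloneqq 2\sup_{\al \in [\al_0, \al_1]} K^\al$, the equation $K^\al = L/2$ has exactly two roots $\al_L^\psi < \al_L^\varphi$ for $L > A_{\trm{ODE}}^{\mathrm{sym}}$, with $\al_L^\psi$ near $0$ and $\al_L^\varphi$ near $\bar\al$. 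Reflecting the corresponding profiles gives exactly two nonzero solutions $\varphi_L^{\mathrm{sym}} > \psi_L^{\mathrm{sym}}$ of \eqref{eq:strip-ODE-sym}, the ordering following from $\al_L^\psi < \al_L^\varphi$ and the monotonicity of $\phi^\al$ in $\al$ (Step~4).

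For the convergence claims: as $L \to \infty$, $K^{\al_L^\varphi} = L/2 \to \infty$ forces $\al_L^\varphi \nearrow \bar\al$ by the monotonicity of $K^\al$ near $\bar\al$, and then $\varphi_L^{\mathrm{sym}}|_{[0, L/2]} = \phi^{\al_L^\varphi}$ is increasing in $L$ and bounded above by $\varphi = \phi^{\bar\al}$. Its pointwise limit solves \eqref{eq:steady-ODE}, is bounded and nonzero, hence equals $\varphi$ by Lemma~\ref{lem:steady-ODE-ign-bi}; the convergence is locally uniform by elliptic estimates. To upgrade to uniform convergence on $[0, L/2]$ I would repeat Step~5 of the proof of Lemma~\ref{lem:strip-ODE-ign-bi}: since $\phi^{\al_L^\varphi}$ is increasing on $[0, L/2]$, it is bounded below on $[y_0, L/2]$ by $\varphi_L^{\mathrm{sym}}(y_0) \to \varphi(y_0) > \vartheta$ for a fixed large $y_0$, and a failure of uniform convergence would, after translating, produce a bounded uniformly concave solution of $\phi'' + f(\phi) = 0$ on a half-line, which is impossible. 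Finally, by symmetry $\sup_{[0,L]} \psi_L^{\mathrm{sym}} = \psi_L^{\mathrm{sym}}(L/2) = s^{\al_L^\psi}$; since $K^{\al_L^\psi} = L/2 \to \infty$ forces $\al_L^\psi \to 0$, and $s^\al \to \vartheta$ as $\al \to 0$ by \eqref{eq:strip-boundary} (with $\vartheta = \theta$ for ignition), this yields \eqref{eq:strip-psi-vartheta}.

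The main obstacle is the symmetry reduction itself --- confirming both that $\phi([0,L]) \subset (0,1)$ and that the first-integral identity genuinely forces $\phi(0) = \phi(L)$ (which uses the sign of $f$ for ignition reactions), so that no asymmetric or range-exceeding solution is overlooked. Once this is in place the rest is essentially a transcription of Lemma~\ref{lem:strip-ODE-ign-bi} with the Dirichlet top condition $\phi(L) = 0$ replaced by the Neumann top condition $\phi'(L/2) = 0$, that is, with the endpoint length $L^\al$ there replaced by the critical point $K^\al$.
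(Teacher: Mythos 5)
Your argument is correct and fills in the details behind the paper's terse ``the proof of Lemma~\ref{lem:strip-ODE-ign-bi} extends to this setting.'' The symmetry reduction via the first integral --- using that $G(s) \coloneqq \tfrac12\varrho^{-2}s^2 + F(s)$ is strictly increasing for ignition $f$, hence $\phi(0) = \phi(L)$ --- is a clean way to justify replacing the Dirichlet endpoint $L^\al$ by the critical point $K^\al$ in the shooting analysis, and the rest is indeed a faithful transcription of Steps~1--5.

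One step is not quite justified as stated: you derive the ordering $\psi_L^{\mathrm{sym}} < \varphi_L^{\mathrm{sym}}$ from $\al_L^\psi < \al_L^\varphi$ together with ``the monotonicity of $\phi^\al$ in $\al$ (Step~4).'' But the derivative formula in Step~4 holds only for $y < K^\al$, and by the very definition of $A_{\trm{ODE}}^{\mathrm{sym}}$ every intermediate slope $\al \in [\al_0, \al_1]$ satisfies $K^\al < L/2$, so for $y$ near $L/2$ you cannot integrate the positive derivative over all of $[\al_L^\psi, \al_L^\varphi]$. The ordering is nevertheless true, and the remedy is the same first-integral identity you already use for the symmetry reduction: both profiles are strictly increasing on $(0, L/2)$ and satisfy $\tfrac12(\phi')^2 + F(\phi) \equiv G(\varrho\al)$, so at a hypothetical first crossing $y^* \in (0, L/2)$ the strict monotonicity of $G$ forces $\bigl(\phi^{\al_L^\psi}\bigr)'(y^*) < \bigl(\phi^{\al_L^\varphi}\bigr)'(y^*)$, whereas a crossing from below would require the opposite inequality; a crossing at $y^* = L/2$ would contradict the strict monotonicity of $s^\al$ from \eqref{eq:max-deriv}.
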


\begin{proof}
  The proof of Lemma~\ref{lem:strip-ODE-ign-bi} extends to this setting.
\end{proof}

We then need traveling waves $\Phi_L^{\mathrm{sym}}$ satisfying
\begin{equation}
  \label{eq:strip-TW-sym}
  \begin{cases}
    \Delta \Phi_L^{\mathrm{sym}} + c_L^{\mathrm{sym}} \partial_x \Phi_L^{\mathrm{sym}} + f(\Phi_L^{\mathrm{sym}}) = 0 & \textrm{in } \R \times (0, L),\\
    \partial_\nu \Phi_L^{\mathrm{sym}} = \varrho^{-1} \Phi_L^{\mathrm{sym}} & \textrm{on } \R \times \{0, L\},\\
    \Phi_L^{\mathrm{sym}}(-\infty, y) = \varphi_L^{\mathrm{sym}}(y),\\
    \Phi_L^{\mathrm{sym}}(+\infty, y) = 0,
  \end{cases}
\end{equation}
where $\partial_\nu$ denotes the partial derivative with respect to the inward normal on the boundary $\R \times \{0, L\}$.
We collect our traveling wave results into one statement.
\begin{proposition}
  \label{prop:omnibus-TW-ign-sym}
  Let $f$ be ignition with $\varrho > 0$.
  Then there exists $A_{\trm{TW}}^{\mathrm{sym}} \geq A_{\trm{ODE}}^{\mathrm{sym}}$ such that for all $L > A_{\trm{TW}}^{\mathrm{sym}}$, the following holds.
  There exists a monotone solution $(\Phi_L^{\mathrm{sym}}, c_L^{\mathrm{sym}})$ to \eqref{eq:strip-TW-sym} which is unique up to translation.
  Moreover, $c_L^{\mathrm{sym}} \in [\gamma, c_*]$, $\partial_x \Phi_L^{\mathrm{sym}} < 0$, $\Phi_L^{\mathrm{sym}}$ is symmetric in $y$ about $y = L/2$, and $\partial_y \Phi_L^{\mathrm{sym}} > 0$ on $\R \times (0, L/2)$.
  Finally, $c_L^{\mathrm{sym}} \to c_*$ as $L \to \infty$.
\end{proposition}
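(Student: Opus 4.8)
The plan is to rerun the entire construction of Section~\ref{sec:strip-TW}, replacing the upper Dirichlet condition $\Phi_L(x,L)=0$ by the Robin condition $\partial_\nu\Phi_L=\varrho^{-1}\Phi_L$ on $\R\times\{L\}$. The only structural change is that both horizontal boundaries now carry the same operator, so the reflection $y\mapsto L-y$ becomes a symmetry of the problem and we recover the $y$-monotonicity that was previously available only when $\varrho=0$. First I would prove the analogue of Lemma~\ref{lem:box-ign-bi} in $\Omega_a$: evolving the parabolic version of \eqref{eq:strip-TW-sym} from the ordered datum $\varphi_L^{\mathrm{sym}}$ produces a steady state $\Phi^{a,c}$ with $0<\Phi^{a,c}<\varphi_L^{\mathrm{sym}}$, and the sliding method of \cite{BN} --- with the Hopf lemma now invoked at \emph{both} Robin boundaries to carry strict positivity and strict ordering inward --- gives uniqueness and $\partial_x\Phi^{a,c}<0$. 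Since the operators at $y=0$ and $y=L$ coincide, $\Phi^{a,c}(x,L-y)$ solves the same problem, so by uniqueness $\Phi^{a,c}$ is even in $y$ about $L/2$; evenness turns this into a Neumann condition on $\R\times\{L/2\}$, and since $\varphi_L^{\mathrm{sym}}$ is increasing on $[0,L/2]$ by ODE uniqueness, the parabolic evolution on the half-rectangle stays nondecreasing in $y$ (at $y=0$ the Robin relation gives $\partial_y v=\varrho^{-1}v\geq0$, so $w\coloneqq\partial_yv\geq0$ on the whole parabolic boundary, hence throughout, by the parabolic minimum principle). The strong maximum principle then upgrades this to $\partial_y\Phi^{a,c}>0$ on $(-a,a)\times(0,L/2)$.

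Next I would transfer the pinning argument verbatim. Corollary~\ref{cor:monotone-in-c} is insensitive to the upper boundary condition; Lemma~\ref{lem:upper} uses the one-dimensional wave $U$ as a supersolution, which depends on $x$ alone and so satisfies any homogeneous condition on $\R\times\{L\}$; and Lemma~\ref{lem:lower} uses a compactly supported radial subsolution centered at $(0,L/2)$, which only needs $a,L$ large enough to contain it. Hence, with the same $\theta_0\in(\vartheta,1)$ and $\gamma=\gamma(\theta_0)$, there is $c^a\in(\gamma,c_*)$ with $\Phi^{a,c^a}(0,L/2)=\theta_0$, and a locally uniform subsequential limit $(\Phi_L^{\mathrm{sym}},c_L^{\mathrm{sym}})$ as $a\to\infty$ with $\partial_x\Phi_L^{\mathrm{sym}}<0$, $c_L^{\mathrm{sym}}\in[\gamma,c_*]$, and the asserted $y$-symmetry and $y$-monotonicity.

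The limits as $x\to\pm\infty$ are pinned down exactly as in Proposition~\ref{prop:strip-TW}: monotonicity in $x$ gives limits solving \eqref{eq:strip-ODE-sym}; the normalization $\Phi_L^{\mathrm{sym}}(0,L/2)=\theta_0>\psi_L^{\mathrm{sym}}(L/2)$, valid for $L$ large by the symmetric analogue of \eqref{eq:strip-psi-vartheta}, forces the left limit to be $\varphi_L^{\mathrm{sym}}$; and the reduced-reaction supersolution $\ti U$ --- built from an ignition reaction $\ti f\leq f$ whose one-dimensional speed $\ti c$ tends to $0$ as $L\to\infty$, so eventually $\ti c\leq\gamma\leq c_L^{\mathrm{sym}}$, then slid in via Dini's theorem and the Hopf lemma --- forces the right limit to be $0$ rather than $\psi_L^{\mathrm{sym}}$. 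This is the most delicate step, but it is blind to the upper boundary condition, since $\ti U$ depends on $x$ only. Uniqueness up to translation, and uniqueness of $c_L^{\mathrm{sym}}$, follow from Vega's energy theory \cite{Vega} as in Lemma~\ref{lem:TW-unique}, once we establish the symmetric analogue of Lemma~\ref{lem:stability}, namely $\m H(\varphi_L^{\mathrm{sym}})<\m H(0)<\m H(\psi_L^{\mathrm{sym}})$ for $L$ large: the same estimates apply, since for large $L$ we have $\varphi_L^{\mathrm{sym}}\approx1$ on almost all of $(0,L)$, where $F(1)>0$, giving $\m H(\varphi_L^{\mathrm{sym}})<0$, while $\psi_L^{\mathrm{sym}}$ is an affine profile of slope $\sim 2\vartheta/L$ bearing a bounded-width bump just above $\vartheta$, of positive energy $\sim C/L$; any boundary contributions in Vega's functional are $O(1)$ for $\varphi_L^{\mathrm{sym}}$ and $O(L^{-2})$ for $\psi_L^{\mathrm{sym}}$ and leave the ordering intact. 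As Vega observes, his arguments cover Robin conditions on both boundaries without modification.

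Finally, $c_L^{\mathrm{sym}}\to c_*$ follows by repeating Lemma~\ref{lem:speed-convergence}: recenter to $\Psi_L^{\mathrm{sym}}(x,y)\coloneqq\Phi_L^{\mathrm{sym}}(x,y+L/2)$, extract a locally uniform limit $\Psi$ along $L_k\to\infty$ realizing $c_-\coloneqq\liminf_{L\to\infty}c_L^{\mathrm{sym}}\in[\gamma,c_*]$, and note that $\Psi$ solves $\Delta\Psi+c_-\partial_x\Psi+f(\Psi)=0$ in $\R^2$ with $\Psi(0,0)=\theta_0$, $0<\Psi<1$, $\partial_x\Psi<0$, $\Psi$ even in $y$, and $\partial_y\Psi>0$ for $y<0$. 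The $x$-limits $\Psi(\pm\infty,\anon)$ solve $\phi''+f(\phi)=0$ in $\R$ and, being even and monotone on $\R_-$, are constant; as $\Psi(-\infty,\anon)$ is a zero of $f$ exceeding $\theta_0>\vartheta$, it equals $1$. Comparison against the whole-plane evolution from a large near-$1$ bump, together with the Aronson--Weinberger spreading theorem \cite{AW}, then forces $c_-\geq c_*$, hence $c_-=c_*$; since $c_L^{\mathrm{sym}}\leq c_*$ for every $L$, we conclude $c_L^{\mathrm{sym}}\to c_*$. In short, no genuinely new obstacle appears: the one point deserving care is re-checking the right-limit dichotomy and the energy ordering, and both go through precisely because the profiles and auxiliary sub- and supersolutions involved are either $x$-dependent only or radial, hence indifferent to the replacement of the Dirichlet upper boundary by a Robin one.
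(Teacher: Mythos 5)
Your proposal is correct and matches the paper's approach: the paper's proof of Proposition~\ref{prop:omnibus-TW-ign-sym} is the single sentence ``The arguments in the preceding subsections extend to this setting,'' and your write-up supplies exactly the verification that claim delegates to the reader. The points you single out for re-examination --- the $y \mapsto L-y$ symmetry of the Robin--Robin problem yielding evenness and then $y$-monotonicity on the half-strip, the fact that the $x$-dependent supersolutions $U$ and $\ti U$ and the radial subsolution are indifferent to the upper boundary condition, and the Robin-compatible form of Vega's energy ordering --- are precisely the places where the Robin--Dirichlet argument needs rechecking, and your treatment of each is sound.
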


\begin{proof}
  The arguments in the preceding subsections extend to this setting.
\end{proof}

Now, we turn to a different variation on $\Phi_L$.
Given $0 \leq \eps \ll 1$, we define an $\eps$-modification $\ubar{f}$ of $f$.
If $f$ is ignition, let $\ubar{f} = f$ and simply view $f$ as an ignition reaction on the larger interval $[-\eps, 1]$.
If $f$ is bistable, we continuously reduce $f$ on $[-\eps, \eps]$ so that $\ubar{f} \leq f$, $\ubar{f}$ is bistable on $[-\eps, 1]$, and $\|\ubar{f} - f\|_\infty < \eps^2$.
In particular, we can assume that \ref{hyp:bi-pos} continues to hold for $\ubar{f}$, provided $\eps$ is sufficiently small.

When we say that $\ubar{f}$ is ignition or bistable on $[-\eps, 1]$, we mean that $\ubar{f}$ satisfies \ref{hyp:ign} and \ref{hyp:ign-derivs} or \ref{hyp:bi} and \ref{hyp:bi-derivs} after we apply the transformation $s \mapsto \frac{s + \eps}{1 + \eps}.$
We can thus apply all our above results to $\ubar{f}$, provided we interpret them properly.
In particular, we must adjust our boundary conditions.
For instance, by Lemma~\ref{lem:steady-ODE-ign-bi}, $\ubar{f}$ has a unique nonzero bounded steady state $\ubar{\varphi}$ in $\R_+$ satisfying
\begin{equation*}
  \ubar{\varphi}'' + \ubar{f}(\ubar{\varphi}) = 0, \quad \ubar{\varphi}'(0) = \varrho^{-1} \left[\ubar{\varphi}(0) + \eps\right].
\end{equation*}
On sufficiently large intervals, Lemma~\ref{lem:strip-ODE-ign-bi} provides two nonzero solutions ${\ubar \varphi_L > \ubar \psi_L}$ to the ODE
\begin{equation}
  \label{eq:eps-strip-ODE}
  \ubar{\phi}_L'' + \ubar{f}(\ubar{\phi}_L) = 0, \quad \ubar{\phi}_L'(0) = \varrho^{-1} \left[\ubar{\phi}_L(0) + \eps\right], \quad \ubar{\phi}_L(L) = -\eps.
\end{equation}
Finally, Proposition~\ref{prop:strip-TW} yields a monotone traveling wave $\ubar{\Phi}_L$ and speed $\ubar{c}_L$ satisfying
\begin{equation}
  \label{eq:eps-strip-TW}
  \begin{cases}
    \Delta \ubar{\Phi}_L + \ubar{c}_L \partial_x \ubar{\Phi}_L + \ubar{f}(\ubar{\Phi}_L) = 0 & \textrm{in } \R \times (0, L),\\
    \partial_y\ubar{\Phi}_L(x, 0) = \varrho{-1}\left[\ubar{\Phi}_L(x, 0) + \eps\right],\\
    \ubar{\Phi}_L(x, L) = -\eps,\\
    \ubar{\Phi}_L(-\infty, y) = \ubar{\varphi}_L(y),\\
    \ubar{\Phi}_L(+\infty, y) = -\eps.
  \end{cases}
\end{equation}
By Lemma~\ref{lem:TW-unique}, the speed is unique, as is the wave up to translation.
To determine $\ubar{\Phi}_L$, we fix
\begin{equation}
  \label{eq:eps-pin}
  \ubar{\Phi}_L\left(0, \frac{L}{2}\right) = \theta_0.
\end{equation}

In applying our results to $\ubar{f}$, we must assume that $L$ is sufficiently large.
For instance, Lemma~\ref{lem:steady-ODE-ign-bi} classifies the solutions of \eqref{eq:eps-strip-ODE} provided $L$ exceeds some constant $\ubar{A}_{\trm{ODE}}$.
In general, we let $\ubar{A}_\cdot$ denote the analogue of the various lower bounds $A_\cdot$.
It is easy to see that we can let $\ubar{A}_\cdot$ vary continuously in $\eps$.
In particular, taking $\eps \leq 1$, we can assume that the constants $\ubar{A}_\cdot$ are bounded uniformly in $\eps$.
In fact, the wave and speed also vary continuously in $\eps$:
\begin{lemma}
  \label{lem:eps-TW-cont}
  Let $f$ be ignition with $\varrho \in [0, \infty)$ or bistable with $\varrho = 0$.
  Fix $L > \ubar{A}_{\trm{TW}}$.
  Then $\ubar{\varphi} \to \varphi$ and $\ubar{\varphi}_L \to \varphi_L$ uniformly as $\eps \searrow 0$.
  Moreover, $\ubar{c}_L \to c_L$ and $\ubar{\Phi}_L \to \Phi_L$ uniformly in $\R \times [0, L]$.
\end{lemma}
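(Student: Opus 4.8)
The plan is to prove each convergence by compactness: along any sequence $\eps_n \searrow 0$ we extract a subsequential limit, identify it with the uniqueness results already established, and conclude that the whole family converges, since every subsequence then has a further subsequence with the same limit. Start with the ODE profiles. For $\eps \leq 1$ the functions $\ubar\varphi$ and $\ubar\varphi_L$ range in a compact interval and solve a second-order ODE with bounded reaction, so along $\eps_n \searrow 0$ a subsequence converges in $C^2_{\mathrm{loc}}(\R_+)$, respectively $C^2([0,L])$, to some $\phi_*$; passing to the limit in \eqref{eq:eps-strip-ODE} (using $\ubar f \to f$ uniformly and $\eps_n \to 0$) shows that $\phi_*$ solves \eqref{eq:steady-ODE}, respectively \eqref{eq:strip-ODE}, so $\phi_* \in \{0,\varphi\}$ by Lemma~\ref{lem:steady-ODE-ign-bi}, respectively $\phi_* \in \{0,\psi_L,\varphi_L\}$ by Lemma~\ref{lem:strip-ODE-ign-bi}. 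I would rule out the spurious limits by comparing initial slopes: \eqref{eq:bulk-ign-bi} for $\ubar f$ gives $\ubar\varphi'(0)^2 = 2\int_{\ubar\varphi(0)}^1 \ubar f \to 2\int_0^1 f > 0$ (by \ref{hyp:bi-pos}, or by $f>0$ on $(\theta,1)$ in the ignition case), so $\phi_*'(0) \neq 0$ and $\phi_* = \varphi$; similarly the shooting analysis of Lemma~\ref{lem:strip-ODE-ign-bi} for $\ubar f$ yields $\ubar\varphi_L'(0) > \ubar\al_1$ with $\ubar\al_1 \to \al_1 > 0$ as $\eps \searrow 0$, while the other two solutions have initial slopes $0$ and $<\al_0<\al_1$, so $\phi_*'(0) \geq \al_1$ pins down $\phi_* = \varphi_L$. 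Uniform convergence $\ubar\varphi_L \to \varphi_L$ on the compact interval $[0,L]$ is then automatic; for $\ubar\varphi \to \varphi$ on all of $\R_+$ one adds a standard tail argument, since $\ubar\varphi$ and $\varphi$ increase monotonically to $1$ and hence are uniformly close past a large cutoff once they agree there.

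For the wave, fix $L > \ubar A_{\trm{TW}}$ and take $\eps_n \searrow 0$. Proposition~\ref{prop:strip-TW} applied to $\ubar f$ (whose constants are uniform in $\eps \leq 1$) gives $\ubar c_L \in [\gamma, \ubar c_*]$, a bounded set, so a subsequence has $\ubar c_L \to c_\infty$; interior elliptic estimates and $\ubar f \to f$ uniformly then give a locally uniform subsequential limit $\Phi_\infty$ of $\ubar\Phi_L$ solving $\Delta\Phi_\infty + c_\infty\partial_x\Phi_\infty + f(\Phi_\infty) = 0$ with the boundary conditions of \eqref{eq:strip-TW} (here $\ubar\Phi_L(x,L) = -\eps_n \to 0$, and when $\varrho = 0$ also $\ubar\Phi_L(x,0) = -\eps_n \to 0$, while for $\varrho > 0$ the Robin condition passes to the limit). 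Monotonicity ($\partial_x\ubar\Phi_L < 0$) makes $\Phi_\infty$ non-increasing in $x$, so its end states $g_\pm := \Phi_\infty(\pm\infty,\anon)$ exist, solve \eqref{eq:strip-ODE}, and lie in $\{0,\psi_L,\varphi_L\}$; also $\Phi_\infty \geq 0$ since $\ubar\Phi_L > -\eps_n$. By \eqref{eq:eps-pin}, $\Phi_\infty(0,L/2) = \theta_0$, and since $\psi_L(L/2) < \theta_0 < \varphi_L(L/2)$ for $L > \ubar A_{\trm{TW}}$, monotonicity in $x$ forces $g_- = \varphi_L$ and $g_+ \in \{0,\psi_L\}$.

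The crux is to show $g_+ = 0$, i.e.\ that the limit wave does not stall at the intermediate state $\psi_L$. The idea is that the \emph{quantitative} supersolution bound inside the proof of Proposition~\ref{prop:strip-TW} survives $\eps \to 0$. That proof constructs, for each $\eps$, a reduced reaction $\ti f \leq \ubar f$ with one-dimensional wave $\ti U$ of speed $\ti c \leq \gamma$ and a point $B$ such that $\ubar\Phi_L(x,y) \leq \ti U(x-B)$ on $[B,\infty)\times[0,L]$; these depend only continuously on $\eta_\eps = \max\ubar\psi_L - \ubar\vartheta$, which converges (by the same compactness argument, $\ubar\psi_L \to \psi_L$ and $\ubar\vartheta \to \vartheta$), so $\ti f, \ti U, \ti c$ converge to their $\eps=0$ counterparts, preserving $\ti U(+\infty) = 0$. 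Since $\Phi_\infty(x,\anon) - \psi_L$ decreases in $x$ to $g_+ - \psi_L \leq 0$, there is a fixed $B^*$ with $\sup_{[0,L]}[\Phi_\infty(B^*,\anon) - \psi_L] < \eta/4$; the local uniform convergence of $\ubar\Phi_L(B^*,\anon)$ and $\ubar\psi_L$ makes $B = B^*$ admissible for small $\eps$, giving $\ubar\Phi_L(x,y) \leq \ti U(x-B^*)$ (for the $\eps$-reduced $\ti U$) on $[B^*,\infty)\times[0,L]$; letting $\eps \searrow 0$ yields $\Phi_\infty(x,y) \leq \ti U(x-B^*)$ there, and $\ti U(+\infty) = 0$ forces $g_+ = 0$. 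Thus $\Phi_\infty$ is a monotone solution of \eqref{eq:strip-TW} at speed $c_\infty$, and Lemma~\ref{lem:TW-unique} together with the normalizations $\Phi_\infty(0,L/2) = \theta_0 = \Phi_L(0,L/2)$ and $\partial_x\Phi_L < 0$ gives $c_\infty = c_L$ and $\Phi_\infty = \Phi_L$. Independence of the subsequence yields $\ubar c_L \to c_L$ and $\ubar\Phi_L \to \Phi_L$ locally uniformly.

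To upgrade the last convergence to uniform on $\R\times[0,L]$: given $\delta>0$, pick $X$ with $\Phi_L(-X,\anon) > \varphi_L - \delta$ and $\Phi_L(X,\anon) < \delta$ on $[0,L]$ (Dini). Monotonicity in $x$ traps $\Phi_L$ and $\ubar\Phi_L$ on $(-\infty,-X]$ between their values at $-X$ and their $x=-\infty$ limits, and on $[X,\infty)$ between the constants ($0$, resp.\ $-\eps$) and their values at $X$; combined with $\ubar\varphi_L \to \varphi_L$ uniformly, $-\eps \to 0$, and local uniform convergence on $[-X,X]\times[0,L]$ and at $\pm X$, this gives $\|\ubar\Phi_L - \Phi_L\|_{L^\infty(\R\times[0,L])} \to 0$. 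I expect the identification $g_+ = 0$ to be the main obstacle: it repeats the ``most delicate issue'' of the strip construction, the new content being that the explicit half-strip supersolution bound itself, rather than only its limiting consequence, is stable under the perturbation $\eps \to 0$.
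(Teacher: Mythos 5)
Your proposal is correct and follows essentially the same route as the paper: extract subsequential limits by compactness, identify the left limit via the normalization \eqref{eq:eps-pin}, resolve the delicate right-limit question $g_+ = 0$ by showing that the half-strip supersolution bound inside the proof of Proposition~\ref{prop:strip-TW} is stable as $\eps \searrow 0$, invoke the uniqueness of Lemma~\ref{lem:TW-unique} to identify the full limit, and upgrade to uniform convergence on $\R \times [0,L]$ using monotonicity in $x$. The paper compresses the ODE-profile convergence to ``standard ODE stability results'' and sketches the supersolution stability more briefly, whereas you supply the shooting-slope separation argument and the explicit choice of $B^*$; these are faithful elaborations rather than a different method, and the crux you flag — that the comparison with $\ti U$ must be seen to persist uniformly in $\eps$, not only at fixed $\eps$ — is exactly the point the paper is making.
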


\begin{proof}
  By standard ODE stability results, $\ubar{\varphi} \to \varphi$ and $\ubar{\varphi}_L \to \varphi_L$ uniformly.
  This leaves the wave and speed.

  Various parameters in our lemmas are continuous in $\eps$, including $\gamma$ and $c_*$.
  Hence the sequence $(\ubar{c}_L)_{\eps  \in (0, 1]}$ is contained in a compact subset of $(0, \infty)$.
  By \eqref{eq:eps-pin}, we can thus extract a subsequence (suppressed for clarity) of $\eps \searrow 0$ such that $\ubar{\Phi}_L$ converges locally uniformly and $\ubar{c}_L$ converges.
  Let $\Phi_L^0$ and $c_L^0 \in [\gamma, c_*]$ denote the corresponding limits.

  The monotone traveling wave $\Phi_L^0$ must converge to solutions of \eqref{eq:strip-ODE} as ${x \to \pm \infty}$.
  Our normalization \eqref{eq:eps-pin} implies that $\Phi_L^0(-\infty, \anon) = \varphi_L$.
  We can handle the right limit as in the proof of Lemma~\ref{prop:strip-TW}.
  That is, let $\ubar{\ti f}$ denote a modification of $\ubar{f}$ which is cut off slightly above $\vartheta.$
  By adjusting the cutoff, we can arrange for the corresponding one-dimensional speed $\ti{\ubar{c}}$ to be small, uniformly in $\eps$.
  Moreover, the one-dimensional wave $\ti{\ubar{U}}$ will decay to $-\eps$ uniformly in $\eps$ as $x \to + \infty$.
  We can again show that there exists a half-strip $[B, \infty) \times [0, L]$  independent of $\eps$ on which $\ubar{\Phi}_L \leq \ti{\ubar{U}}$.
  Taking $\eps \to 0$, the uniformity in $\eps$ of the decay of $\ti{\ubar{U}}$ implies that $\Phi_L^0(+\infty, \anon) = 0$.
  So $\Phi_L^0$ is a solution to \eqref{eq:strip-TW}.

  By Lemma~\ref{lem:TW-unique}, the monotone traveling wave with these limits is unique.
  Hence $\Phi_L^0 = \Phi_L$ and $c_L^0 = c_L$.
  Since the subsequential limit is unique, the entire sequence $(\ubar{\Phi}_L, \ubar{c}_L)_{\eps \in (0, 1]}$ converges to $(\Phi_L, c_L)$.
  Furthermore, the monotonicity of $\Phi_L$ and $\ubar{\Phi}_L$ in $x$ implies that the limit $\ubar{\Phi}_L \to \Phi_L$ is actually uniform in $\R \times [0, L]$.
\end{proof}

\section{Ignition and bistable spreading}
\label{sec:ASP-ign-bi}

We now return to the half-space $\hp \subset \R^{d + 1}$.
We study the solution $u$ of \eqref{eq:main} evolving from compactly supported initial data $u_0$, and prove Theorem~\ref{thm:ASP}\ref{item:ASP-ign-bi}.
We break the proof into several parts.

\subsection{The upper bound}

Using linear and one-dimensional theory, we can quickly establish the upper bound \eqref{eq:upper-ASP}.
In fact, the same argument applies to all classes of reactions and all boundary conditions.
\begin{proposition}
  \label{prop:upper}
  Let $f$ be monostable, ignition, or bistable with $\varrho \in [0, \infty]$.
  Let $u$ solve \eqref{eq:main} with $0 \leq u_0 \leq 1$ compactly supported.
  Then $u$ satisfies \eqref{eq:upper-ASP}.
\end{proposition}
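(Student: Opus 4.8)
The plan is to dominate $u$ by a supersolution that propagates at speed $c_* + \eta$ for arbitrary $\eta > 0$, using only one-dimensional traveling-wave theory in the whole space. Since $f \leq f^+$ for a suitable monostable, ignition, or bistable reaction (for the monostable case we can take $f$ itself, possibly after replacing $f$ by a larger KPP-type reaction with the same $c_*$; in the ignition and bistable cases $f$ already has a finite one-dimensional speed $c_*$), it suffices to work with the whole-space equation $\partial_t w = \Delta w + f(w)$ on $\R^{d+1}$, because extending $u$ by an even reflection across $\partial\hp$ in the Dirichlet or Robin case only decreases the solution on $\hp$ — more directly, the constant $1$ and translates of one-dimensional waves are supersolutions of \eqref{eq:main} regardless of the boundary condition, since they are nonnegative on $\partial\hp$ and the Robin/Dirichlet condition is of the ``right sign''. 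So I will build a whole-space supersolution and restrict it to $\bar\hp$.

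The key step is the standard ``bump of traveling waves'' construction. Let $U$ be the one-dimensional wave of speed $c_*$ for (the relevant majorant of) $f$, normalized so $U(-\infty) = 1$, $U(+\infty) = 0$, and recall $U$ is decreasing with $0 < U < 1$. Because $u_0$ is compactly supported with $0 \leq u_0 \leq 1$, there is $R_0 > 0$ with $\operatorname{supp} u_0 \subset B_{R_0}$, and hence $u_0(\tbf x) \leq U(|\tbf x| - R_0 - b)$ for a suitable shift $b$ chosen so that $U(-b) \geq 1$ on the relevant range (using that $U(-\infty) = 1$). Now for $c > c_*$ define, for $\tbf x \in \R^{d+1}$ and $t \geq 0$,
\begin{equation*}
  \bar u(t, \tbf x) \coloneqq U\!\big(|\tbf x| - ct - R_0 - b\big).
\end{equation*}
A direct computation gives $\partial_t \bar u - \Delta \bar u - f(\bar u) = -c\, U' + \tfrac{d}{|\tbf x|}\,(-U') \geq (c_* - c)\,U' \geq 0$ in $\{|\tbf x| > 0\}$, where the first inequality uses $U' < 0$ and $|\tbf x| \geq$ (something positive on the support of $U'$ once $t$ is not too small), and the last uses $c > c_*$; near $\tbf x = 0$ one checks directly that the radial profile is a supersolution, or one notes $\bar u \equiv 1$ there once $ct + R_0 + b$ is large. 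Thus $\bar u$ is a supersolution of the whole-space equation, hence of \eqref{eq:main}, and $\bar u(0, \anon) \geq u_0$, so the comparison principle yields $u(t, \tbf x) \leq U(|\tbf x| - ct - R_0 - b)$ on $[0,\infty) \times \bar\hp$.

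Finally, to extract \eqref{eq:upper-ASP}: fix $c' > c_*$ and pick $c \in (c_*, c')$. For $|\tbf x| \geq c' t$ we have $|\tbf x| - ct - R_0 - b \geq (c' - c)t - R_0 - b \to \infty$ as $t \to \infty$, so $U(|\tbf x| - ct - R_0 - b) \leq U\big((c'-c)t - R_0 - b\big) \to U(+\infty) = 0$ uniformly in such $\tbf x$. Hence $\limsup_{t\to\infty} \sup_{|\tbf x| \geq c't} u(t, \tbf x) = 0$, which is exactly \eqref{eq:upper-ASP} for the arbitrary speed $c' > c_*$. I expect the only mild subtlety to be the bookkeeping near the origin $\tbf x = 0$ in the supersolution check (handled by arranging $\bar u \equiv 1$ on a neighborhood of the origin for all $t \geq 0$, which holds once $R_0 + b$ is taken large enough), together with confirming that the comparison principle applies across the Robin/Dirichlet boundary — which it does, since $\bar u \geq u$ on $\partial\hp$ at $t = 0$ and $\bar u$ satisfies $\partial_y \bar u - \varrho^{-1}\bar u \leq 0$ there is not even needed; one simply notes $\bar u$ is a supersolution of the Neumann-reflected whole-space problem, which dominates \eqref{eq:main}. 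The argument is reaction-class- and boundary-condition-agnostic, as claimed.
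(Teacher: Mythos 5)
Your route is different from the paper's in two respects: you use a single radial bump $U(|\tbf x| - ct - R_0 - b)$ built from a 1D wave of speed $c > c_*$, whereas the paper first compares $u$ against the linear evolution with rate $\mu = \sup_{(0,1)} f(s)/s$ to establish Gaussian decay of $w(1,\anon)$, and only then fits a family of \emph{planar} waves $U(\tbf e \cdot \tbf x - c_* t - B)$ of speed exactly $c_*$, one for each direction $\tbf e \in S^d$. The planar construction sidesteps the non-smoothness of $|\tbf x|$ at the origin, which your radial bump does not cleanly handle (though this is fixable via the viscosity interpretation of a concave corner).

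The genuine gap is at the initial time. You write that $u_0(\tbf x) \leq U(|\tbf x| - R_0 - b)$ for a shift $b$ ``chosen so that $U(-b) \geq 1$,'' and later that ``$\bar u \equiv 1$'' near the origin once $ct + R_0 + b$ is large. Both statements are false: a traveling wave satisfies $U < 1$ strictly on all of $\R$, with $U(-\infty) = 1$ only in the limit, so $U(-b) < 1$ for every $b$, and $\bar u < 1$ everywhere. Since the hypothesis only gives $0 \leq u_0 \leq 1$, the case $\sup u_0 = 1$ (e.g.\ $u_0 = \tbf 1_B$) is allowed, and then $u_0 \leq U(\anon - R_0 - b)$ fails on an open set for every $b$, so the comparison principle cannot be initiated at $t = 0$. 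The paper's two-step preprocessing is exactly what repairs this: by the strong maximum principle $w(1,\anon) < 1$ strictly, and the linear comparison gives Gaussian decay of $w(1,\anon)$ which beats the merely exponential decay of $U$ at $+\infty$; \emph{together} these two facts allow one to slide $U$ above $w(1,\anon)$ and start the wave comparison from $t = 1$. If you insert this preprocessing step (or equivalently restart your argument at $t = 1$ after establishing both $u(1,\anon) < 1$ and super-exponential decay), the remainder of your radial-bump computation is sound and yields \eqref{eq:upper-ASP}.
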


\begin{proof}
  Since we only need an upper bound, we can assume that $\varrho = \infty$, which corresponds to Neumann conditions.
  Then the evolution in $\hp$ agrees with a free evolution in $\R^{d + 1}$ from an initial condition which is even in $y$.
  Let $w_0$ denote the even extension of $u_0$ to $\R^{d + 1}$, and let $w \colon [0,\infty) \times \R^{d + 1} \to \R$ satisfy
  \begin{equation}
    \label{eq:super}
    \begin{cases}
      \partial_t w = \Delta w + f(w),\\
      w(0, \tbf{x}) = w_0(\tbf{x}).
    \end{cases}
  \end{equation}
  Then by our observations above, $w = u$ on $\bar{\hp}$.
  Also, by the strong maximum principle, $w(t, \anon) < 1$ when $t > 0$.
  
  Now define
  \begin{equation*}
    \mu \coloneqq \sup_{s \in (0, 1)} \frac{f(s)}{s}.
  \end{equation*}
  Then the solution to
  \begin{equation*}
    \begin{cases}
      \partial_t W = \Delta W + \mu W,\\
      W(0, \tbf{x}) = w_0(\tbf{x})
    \end{cases}
  \end{equation*}
  is a supersolution to \eqref{eq:super}, so $w \leq W$.
  By an explicit computation with the heat kernel, $W$ decays like a Gaussian as $\abs{x} \to \infty$ at any fixed positive time.
 
  On the other hand, the one-dimensional wave $U$ merely decays exponentially at $+\infty$, and satisfies $U(-\infty) = 1$.
  Since $w(1, \anon) < 1$ decays super-exponentially, there exists a shift $B$ of $U$ such that
  \begin{equation*}
    w(1, \tbf{x}) \leq U(\tbf{e} \cdot \tbf{x} - c_* - B)
  \end{equation*}
  for all $\tbf{e} \in S^{d}$.
  Using $U(\tbf{e} \cdot \tbf{x} -  c_*t - B)$ as a supersolution for each $\tbf{e} \in S^d$, we see that $w$, and hence $u$, cannot propagate in any direction faster than $c_*$.
\end{proof}

\subsection{The lower bound on a slab}

The proof of the lower bound \eqref{eq:upper-ASP} in Theorem~\ref{thm:ASP}\ref{item:ASP-ign-bi} is more involved.
In this subsection, we establish it on a slab of bounded width near $\partial \hp$.
\begin{proposition}
  \label{prop:lower-slab-ign-bi}
  Let $f$ be ignition with $\varrho \in [0, \infty)$ or bistable with $\varrho = 0$.
  Suppose that $(\theta + \delta) \tbf{1}_B \leq u_0 \leq 1$ for some $\delta \in (0, 1 - \theta)$ and some ball $B \subset \hp$ of radius $R_{\trm{steady}}(\delta) > 0$.
  Then for all $\ell > 0$ and $c \in [0, c_*)$,
  \begin{equation}
    \label{eq:lower-ASP-slab}
    \limsup_{t \to \infty} \sup_{\substack{\abs{\tbf{x}'} \leq ct \\ 0 \leq y \leq \ell}} \abs{u(t, \tbf{x}', y) - \varphi(y)} = 0.
  \end{equation}
\end{proposition}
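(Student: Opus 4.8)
\emph{Plan.} The statement contains an upper and a lower estimate. The upper estimate is immediate: since the constant $1$ is a supersolution of \eqref{eq:main}, the solution $u^1$ with initial datum $1$ decreases in $t$ to $\varphi$ (Lemma~\ref{lem:steady-ODE-ign-bi} and comparison), and as $u^1 = u^1(t, y)$ depends only on $(t,y)$, Dini's theorem gives $u^1(t, \anon) \to \varphi$ uniformly on $[0, \ell]$; since $u \le u^1$, this bounds $u$ from above. So the work lies in the lower estimate $u \ge \varphi - o(1)$ on the slab, and I would establish it by using the traveling waves in strips as subsolutions. Fix $c \in [0, c_*)$, $\ell > 0$, and — it suffices — a small $\eta > 0$. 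By Lemma~\ref{lem:speed-convergence} I would take $L$ large so that $c_L > c$, $L/2 > \ell$, and $\sup_{[0, \ell]} \abs{\varphi_L - \varphi} < \eta$; then, by Lemma~\ref{lem:eps-TW-cont}, $\eps \in (0, \eta)$ small enough that $\ubar c_L > c$ and $\sup_{[0,\ell]} \abs{\ubar \varphi_L - \varphi_L} < \eta$, so $\ubar \varphi_L \ge \varphi - 2\eta$ on $[0, \ell]$. The crucial observation is that $u$, restricted to the strip $\R^d \times [0, L]$, is a supersolution of the problem on that strip with Dirichlet data at $y = L$, because $u \ge 0$ there. I would then compare $u$ on this strip with an expanding radial subsolution built from the $\eps$-modified wave $\ubar \Phi_L$.

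\emph{The subsolution.} Take $s_0$ very negative, so that for $s \le s_0$ the profile $\ubar \Phi_L(s, \anon)$ is within $\m O(e^{\mu s_0})$ in $\m C^2$ of the steady state $\ubar \varphi_L$, where $\mu > 0$ is the decay rate of $\ubar \Phi_{L, s}$ at $-\infty$. Let $\hat \Phi(s, y)$ be a nonincreasing-in-$s$ modification of $\ubar \Phi_L$ that equals $\ubar \varphi_L(y)$ for $s \le s_0$, equals $\ubar \Phi_L(s, y)$ for $s \ge s_0 + 1$, has $\hat \Phi(\anon, L) \equiv -\eps$, and is a subsolution of the strip traveling-wave equation at speed $\ubar c_L$ up to an error $\m O(e^{\mu s_0})$; this is possible because on $[s_0, s_0+1]$ the profile is within $\m O(e^{\mu s_0})$ of the steady state $\ubar \varphi_L$. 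Fix $\delta_0 \in (0, \ubar c_L - c)$ and let $\xi(t)$ solve $\dot \xi = \ubar c_L - \tfrac{2(d-1)}{\xi} - \delta_0$ with $\xi(0)$ a large constant exceeding $-s_0$; then $\xi$ is increasing and $\xi(t)/t \to \ubar c_L - \delta_0$. Finally set, for a small $\omega > 0$ and $t \ge T_0$,
\begin{equation*}
  \underline u(t, \tbf x', y) \coloneqq \hat \Phi\big(\abs{\tbf x'} - \xi(t - T_0),\, y\big) - \eta\, e^{-\omega(t - T_0)}.
\end{equation*}
Near $\tbf x' = 0$ one has $\abs{\tbf x'} - \xi(\anon) < s_0$, so $\hat \Phi = \ubar \varphi_L(y)$ there and $\underline u$ is smooth across the origin — this is exactly why the profile is flattened at $-\infty$.

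\emph{Checking the subsolution property} is the delicate step. A direct computation using the traveling-wave equation for $\hat \Phi$ gives
\begin{equation*}
  \partial_t \underline u - \Delta \underline u - f(\underline u) = \hat \Phi_s\Big[\ubar c_L - \dot \xi - \tfrac{d-1}{\abs{\tbf x'}}\Big] + [\ubar f - f](\hat \Phi) - D + \big(\omega + f'(\xi_*)\big)\eta\, e^{-\omega(t - T_0)},
\end{equation*}
where $D$ is the wave defect of $\hat \Phi$ (so $D \ge -C e^{\mu s_0}$) and $\xi_*$ is an intermediate value. Wherever $\hat \Phi_s < 0$ one has $\abs{\tbf x'} > \xi(\anon) + s_0 > \tfrac12 \xi(\anon)$, so the choice of $\xi$ forces the bracket to be $\ge \delta_0 > 0$ and the first term $\le 0$; the term $[\ubar f - f](\hat \Phi) \le 0$ since $\ubar f \le f$; the defect term is $\m O(e^{\mu s_0})$; and the correction term is negative wherever $\hat \Phi$ is close to $1$, using $\omega < \abs{f'(1^-)}$. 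Choosing $s_0$ sufficiently negative and $\omega, \eta$ small, $\underline u$ is a subsolution of the strip problem for $t \ge T_0$. The boundary conditions are also respected: the $\eps$-shifted Robin (resp.\ Dirichlet) condition obeyed by $\ubar \Phi_L$ at $y = 0$ is a sub-boundary-condition for \eqref{eq:main}, while $\underline u < 0 \le u$ on $\R^d \times \{L\}$.

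\emph{Initialization and conclusion.} By Lemma~\ref{lem:invasion-ign-bi} and parabolic regularity up to $\partial \hp$, $u(t, \anon) \to \varphi$ uniformly on compact subsets of $\bar \hp$; I would fix $T_0$ so large that $u(T_0, \anon) \ge \varphi - \eta$ on $\{\abs{\tbf x'} \le \xi(0) + 1\} \times [0, L]$. Since $\hat \Phi \le \ubar \varphi_L \le \varphi$, this yields $\underline u(T_0, \anon) \le u(T_0, \anon)$ there, and outside that set $\underline u(T_0, \anon) < 0 \le u(T_0, \anon)$ by monotonicity of $\hat \Phi$. The comparison principle on the strip then gives $\underline u \le u$ for all $t \ge T_0$. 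For $\abs{\tbf x'} \le ct$ and $t$ large, $\abs{\tbf x'} - \xi(t - T_0) \to -\infty$ uniformly (as $c < \ubar c_L - \delta_0$), so $\hat \Phi = \ubar \varphi_L(y)$ and $u(t, \tbf x', y) \ge \ubar \varphi_L(y) - \eta\, e^{-\omega(t-T_0)} \ge \varphi(y) - 2\eta - o(1)$ for $0 \le y \le \ell$. Combined with the upper estimate, $\limsup_{t \to \infty} \sup_{\abs{\tbf x'} \le ct,\, 0 \le y \le \ell} \abs{u - \varphi} \le 2\eta$, and letting $\eta \to 0$ gives \eqref{eq:lower-ASP-slab}. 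The main obstacle is precisely the construction of $\hat \Phi$ and the verification of the subsolution inequality — balancing the transverse curvature $\tfrac{d-1}{\abs{\tbf x'}}$ against the excess speed $\ubar c_L - \dot \xi$, controlling the interpolation zone $[s_0, s_0 + 1]$, and handling $f$ at the $y$-values where $\ubar \varphi_L$ is small (in particular for ignition, where $f \equiv 0$ near $0$); the $\eps$-modification and the flattening of the profile at $-\infty$ are the devices that make the far field and the origin harmless, and the rest is routine parabolic comparison.
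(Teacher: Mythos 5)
Your overall strategy---build a radial subsolution from the $\eps$-modified strip wave $\ubar\Phi_L$ moving outward at speed slightly below $\ubar c_L$, and initialize it via the locally uniform invasion from Lemma~\ref{lem:invasion-ign-bi}---is the same as the paper's. However, the way you handle the curvature term $(d-1)/\abs{\tbf x'}$ near the axis $\tbf x' = 0$ differs from the paper, and it is there that a genuine gap appears.

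The paper avoids the axis entirely: it fixes $\ubar R = (d-1)/(\ubar c_L - c')$, waits until $u \geq \ubar\varphi_L$ on the fixed cylinder $\{\abs{\tbf x'} \leq \ubar R\} \times [0,L]$ (which is possible because $\ubar\varphi_L < \varphi$ pointwise, with a compactness gap), and compares $u$ with $w(t,\tbf x',y) = \ubar\Phi_L(\abs{\tbf x'} - c'(t-T) - \ubar R + \ubar x, y)$ only on the \emph{annular} region $\ubar R < \abs{\tbf x'} < \ubar R + c'(t-T)$. The lateral boundary $\{\abs{\tbf x'} = \ubar R\}$ is handled by the steady state bound $u \geq \ubar\varphi_L \geq w$, and the subsolution check is then a one-line computation because $\abs{\tbf x'} \geq \ubar R$ there. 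No modification of the profile and no time-dependent correction are needed.

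Your construction instead flattens $\ubar\Phi_L$ into a profile $\hat\Phi$ that is exactly constant for $s \leq s_0$, at the cost of an approximate-subsolution defect $D$ on the transition zone $[s_0, s_0+1]$, and then subtracts a time-decaying correction $\eta e^{-\omega(t-T_0)}$ in order to initialize with $u(T_0,\anon) \geq \varphi - \eta$ rather than $u(T_0,\anon) \geq \ubar\varphi_L$. The problem is the sign of the correction term $(\omega + f'(\xi_*))\eta e^{-\omega(t-T_0)}$ at values of $\hat\Phi$ where $f' > 0$. You dismiss this by noting it is negative ``wherever $\hat\Phi$ is close to $1$,'' but the subsolution inequality must hold for \emph{all} $(s,y)$; in particular, for $s \leq s_0$ one has $\hat\Phi_s \equiv 0$, the defect $D \equiv 0$, and $[\ubar f - f](\hat\Phi) = 0$, so the entire left-hand side reduces to $(\omega + f'(\ubar\varphi_L(y) - \eta_*))\eta e^{-\omega(t-T_0)}$, and this is strictly positive at any $y$ where $\ubar\varphi_L(y)$ sits in a region where $f' > 0$ (e.g.\ just above $\theta$ for ignition, or in $(\vartheta, 1)$ for bistable)---which always happens somewhere on $[0,L]$. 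The standard fix (a Fife--McLeod type subsolution with a time-dependent spatial shift $\zeta(t)$ in addition to the vertical shift, chosen to cancel the $f'>0$ contributions against $\hat\Phi_s \zeta'(t)$) is not at all ``routine'' and is precisely what the paper's annular device renders unnecessary.

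Two smaller points in the same vein: (i) you use $\hat\Phi \leq \ubar\varphi_L \leq \varphi$ in the initialization, but the pointwise ordering $\ubar\varphi_L \leq \varphi$ does not follow merely from uniform convergence of $\ubar\varphi_L$ to $\varphi_L$ and $\varphi_L < \varphi$ in $(0,L]$, because in the Dirichlet case both touch at $y=0$; the paper devotes a separate paragraph to this, distinguishing $\varrho > 0$ from $\varrho = 0$ and using $\|\ubar f - f\|_\infty < \eps^2$ to control the competing effects near the boundary. (ii) Your $O(e^{\mu s_0})$ exponential rate for $\ubar\Phi_L(s,\anon) - \ubar\varphi_L$ is not established in the paper (only the limit is); your argument would go through with $o(1)$ as $s_0 \to -\infty$, but as noted above, fixing the defect is not the real difficulty. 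I would recommend switching to the annular comparison, which removes the need for $\hat\Phi$ and the exponential correction entirely.
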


\begin{proof}
  Fix $0 \leq c < c' < c_*$, $\ell > 0$, and $\eta > 0$.
  We use the strip traveling waves $(\Phi_L, c_L)$ from Proposition~\ref{prop:strip-TW}.
  By Lemmas~\ref{lem:strip-ODE-ign-bi} and \ref{lem:speed-convergence}, there exists ${L > \max\{A_{\trm{TW}}, 2\ell\}}$ such that $c_L > c'$ and
  \begin{equation}
    \label{eq:long-convergence-ign-bi}
    \varphi - \frac{\eta}{3} < \varphi_L \leq \varphi \quad \textrm{on } [0, \ell].
  \end{equation}

  We now use the $\eps$-modification $\ubar{f} \leq f$ introduced in Section~\ref{sec:strip-TW}.
  We claim that $\ubar{\varphi}_L < \varphi$ when $\eps$ is sufficiently small.
  By Lemmas~\ref{lem:strip-ODE-ign-bi} and \ref{lem:eps-TW-cont}, $\varphi_L < \varphi$ in $(0, L]$ and $\ubar{\varphi}_L \to \varphi_L$ uniformly as $\eps \to 0$.
  If $\varrho > 0$, we in fact have $\varphi_L < \varphi$ in $[0, L]$, and there is a uniform gap between the two by compactness.
  If follows that $\ubar{\varphi}_L < \varphi$ once $\eps$ is sufficiently small.

  Now suppose $\varrho = 0$.
  Then the ordering is only in doubt when $y \lesssim \eps$, where $\varphi_L$ and $\varphi$ become close.
  If $f$ is ignition, we have $\ubar{f} = f$, and ODE stability results imply that the $-\eps$ boundary condition for $\ubar{\varphi}_L$ reduces $\varphi_L$ by order $\eps$ in a fixed neighborhood of $y = 0$.
  Thus $\ubar{\varphi}_L < \varphi$.
  For bistable reactions, we recall that $\|\ubar{f} - f\|_\infty < \eps^2$.
  Hence the change of reaction only changes $\ubar{\varphi}_L$ by order $\eps^2 \ll \eps$.
  Therefore the boundary condition beats the adjustment of $f$, and we still have $\ubar{\varphi}_L < \varphi.$
  
  By Lemma~\ref{lem:eps-TW-cont}, there exists $\eps > 0$ such that $\ubar{\varphi}_L < \varphi$, $\ubar{c}_L > c'$, and
  \begin{equation}
    \label{eq:steady-strip-distance-ign-bi}
    \big\|\ubar{\varphi}_L - \varphi_L\big\|_{L^\infty([0, L])} < \frac{\eta}{3}.
  \end{equation}
  
  Now define
  \begin{equation}
    \label{eq:large-radius-ign-bi}
    \ubar{R} \coloneqq \frac{d - 1}{\ubar{c}_{L} - c'}
  \end{equation}
  and let $\ubar B$ denote the $d$-dimensional ball of radius $\ubar{R}$ centered at the origin in $\R^d$.
  By Lemma~\ref{lem:invasion-ign-bi}, $u(t, \anon) \to \varphi$ locally uniformly as $t \to \infty$.
  We claim that there exists a time $T > 0$ such that
  \begin{equation}
    \label{eq:comparison-boundary-ign-bi}
    u(t, \tbf{x}', y) \geq \ubar\varphi_{L}(y) \quad \textrm{for all } (t, \tbf{x}', y) \in [T, \infty) \times \bar{\ubar B} \times [0, L].
  \end{equation}
  After all, $\ubar{\varphi}_L < \varphi$ in the compact interval $[0, L]$, so there is a uniform gap between $\varphi$ and $\ubar\varphi_L$.
  Hence the locally uniform convergence $u(t, \anon) \to \varphi$ implies \eqref{eq:comparison-boundary-ign-bi} for sufficiently large $T$.

  We now define a radial subsolution on the region
  \begin{equation*}
    \m A \coloneqq \big\{(t, \tbf{x}', y) \mid t > T, \, \ubar{R} < |\tbf{x}'| < \ubar{R} + c'(t - T), \, 0 < y < L\big\}.
  \end{equation*}
  Since $\ubar{\Phi}_L(+\infty, \anon) = -\eps$, there exists $\ubar{x} > 0$ such that
  \begin{equation*}
    \sup_{y \in [0, L]} \ubar\Phi_L(\ubar{x}, y) = 0.
  \end{equation*}
  We then define
  \begin{equation*}
    w(t, \tbf{x}', y) \coloneqq \ubar\Phi_L\big(|\tbf{x}'| - c'(t - T) - \ubar{R} + \ubar{x}, y\big) \quad \textrm{for } (t, \tbf{x}', y) \in \m A.
  \end{equation*}
  Using the spherical representation of the Laplacian in $\R^d$, we find
  \begin{equation*}
    \partial_t w - \Delta w - f(w) \leq \partial_t w - \Delta w - \ubar{f}(w) = \Big(\ubar c_L - c' - \frac{d - 1}{r}\Big) \partial_r w,
  \end{equation*}
  where $r \coloneqq \abs{\tbf{x}'}$.
  Now $\partial_x \ubar \Phi_L \leq 0$, so $\partial_r w \leq 0$.
  Furthermore, our choice of $\ubar{R}$ in \eqref{eq:large-radius-ign-bi} implies that
  \begin{equation*}
    \ubar c_L - c' - \frac{d - 1}{r} \geq  \ubar c_L - c' - \frac{d - 1}{\ubar{R}} = 0.
  \end{equation*}
  Therefore $\partial_t w - \Delta w - f(w) \leq 0$ in $\m A$, as desired.

  We wish to apply the comparison principle to conclude that $u \geq w$ in $\m A$.
  We must thus check various boundary conditions.
  For fixed $t \geq T$, let $\m A_t$ denote the $t$ time-slice of $\m A$.
  Then $\m A_t$ is an annular cylinder with inner radius $\ubar{R}$, outer radius $\ubar{R} + c'(t - T)$, and height $L$.
  Its boundary has four pieces, corresponding to $\abs{\tbf{x}'} = \ubar{R}$, $\abs{\tbf{x}'} = \ubar{R} + c'(t -T)$, $y = 0$, and $y = L$.

  When $\abs{\tbf{x}'} = \ubar{R}$, \eqref{eq:comparison-boundary-ign-bi} implies that $u \geq \ubar \varphi_L \geq w$.
  When $\abs{\tbf{x}'} = \ubar{R} + c'(t -T)$ or $y = L$, $u \geq 0 \geq w$.
  Finally, $u$ satisfies a ``larger'' boundary condition than $w$  when $y = 0$.
  Since $\m A_T = \emptyset$, there is no initial condition to check.
  Therefore, the comparison principle implies $u \geq w$ in $\m A$.
  
  Finally, consider points in $\m A$ with $\abs{\tbf{x}'} \leq c t$.
  Then
  \begin{equation*}
    |\tbf{x}'| - c'(t - T) - \ubar{R} + \ubar{x} \to -\infty \quad \textrm{as } t \to \infty, 
  \end{equation*}
  so in the definition of $w$ we are evaluating $\ubar \Phi_L$ on the far left.
  Since $\ubar{\Phi}_L(-\infty, \anon) = \ubar{\varphi}_L$, there exists $C > 0$ such that $\ubar \varphi_L - \ubar \Phi_L(x, \anon) < \frac \eta 3$ for all $x \leq -C$.
  Thus, there exists $T' \geq T$ such that
  \begin{equation*}
    u(t, \tbf{x}', y) > \ubar \varphi_L(y) - \frac \eta 3
  \end{equation*}
  for all $t \geq T'$, $\ubar{R} \leq \abs{\tbf{x}'} \leq ct$, and $y \in [0, L]$.
  Furthermore, \eqref{eq:comparison-boundary-ign-bi} allows us to extend this bound to $\abs{\tbf{x}'} \leq \ubar{R}$.
  By \eqref{eq:long-convergence-ign-bi} and \eqref{eq:steady-strip-distance-ign-bi}, we obtain
  \begin{equation*}
    \limsup_{t \to \infty} \sup_{\substack{\abs{\tbf{x}'} \leq c t \\ 0 \leq y \leq \ell}} \big[\varphi(y) - u(t, \tbf{x}', y)\big] < \eta.
  \end{equation*}
  But $\eta > 0$ was arbitrary, so in fact
  \begin{equation}
    \label{eq:liminf-slab-ign-bi}
    \limsup_{t \to \infty} \sup_{\substack{\abs{\tbf{x}'} \leq c t \\ 0 \leq y \leq \ell}} \big[\varphi(y) - u(t, \tbf{x}', y)\big] \leq 0.
  \end{equation}
  
  To control $u - \varphi$ from above, consider the solution $u^1$ to \eqref{eq:main} with $u_0 \equiv 1$.
  It depends only on $y$ and decreases locally uniformly to $\varphi$.
  In fact, $\varphi \leq u^1 \leq 1$ and $\varphi(+\infty) = 1$ imply that $u^1(t, \anon) \to \varphi$ \emph{uniformly}.
  Since $u \leq u^1$, we find
  \begin{equation*}
    \limsup_{t \to \infty} \sup_{\hp} [u(t, \anon) - \varphi] \leq 0.
  \end{equation*}
  In light of \eqref{eq:liminf-slab-ign-bi}, we obtain \eqref{eq:lower-ASP-slab}.
\end{proof}

\subsection{The full lower bound}

With control on slabs, we can extend the propagation into the interior of $\hp$.
The following argument applies to all reactions.
\begin{proposition}
  \label{prop:ASP-interior}
  Let $f$ be monostable or ignition with $\varrho \in [0, \infty)$ or bistable with $\varrho = 0$.
  Let $u$ solve \eqref{eq:main} with $u_0 \leq 1$, and suppose that \eqref{eq:lower-ASP-slab} holds for all $\ell > 0$ and $c \in [0, c_*)$.
  Then $u$ satisfies \eqref{eq:lower-ASP}.
\end{proposition}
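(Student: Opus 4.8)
The plan is to patch the slab estimate \eqref{eq:lower-ASP-slab} near $\partial\hp$ together with a spreading subsolution launched deep inside $\hp$, arranged so that it never meets the boundary. Fix $c \in [0, c_*)$ and $\eta > 0$ (it suffices to treat $\eta$ small, since for $\eta \geq 1$ the bound below is trivial). As $\varphi(+\infty) = 1$, first pick $\ell_0 > 0$ with $1 - \varphi(\ell_0) < \eta/4$. The ``$\limsup \leq 0$'' half of \eqref{eq:lower-ASP} is then immediate: on $\{y \leq \ell_0\}$ it follows from \eqref{eq:lower-ASP-slab} with $\ell = \ell_0$, while on $\{y \geq \ell_0\}$ one has $u \leq 1$ (as $u_0 \leq 1$ and the constant $1$ is a supersolution of \eqref{eq:main}), so $u - \varphi(y) \leq 1 - \varphi(\ell_0) < \eta$; letting $\eta \to 0$ finishes. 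The same splitting reduces the ``$\liminf \geq 0$'' half to the claim that for every small $\eta > 0$ and every $c < c_*$, one has $u(t, \tbf{x}', y) > 1 - \eta$ eventually whenever $y > \ell_0$ and $\abs{(\tbf{x}', y)} \leq ct$; indeed then $u > 1 - \eta \geq \varphi(y) - \eta$.

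\textbf{The cutoff reaction and the source.}
To prove the claim, fix $c < c_1 < c_*$ and set $m \coloneqq \varphi(\ell_0) - \eta/4 > 1 - \eta/2$. Following Section~\ref{sec:strip-TW}, I use a modified reaction $\ubar{f} \leq f$ that coincides with $f$ on most of $[0, m]$, is capped so that $\ubar{f}(m) = 0$ and $\ubar{f}'(m^-) < 0$, and is extended below $0$ by a shallow negative bump on $[-\eps, 0]$ with $\ubar{f}(-\eps) = 0$; rescaling $[-\eps, m]$ to $[0,1]$, $\ubar{f}$ is of the same type as $f$ (for monostable $f$ it becomes bistable, with \ref{hyp:bi-pos} still valid for $\eps$ small). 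Let $\ubar{c}_*$ be its whole-space invasion speed. Since $\ubar{f} \to f$ as $\eps \to 0$ and $m \to 1$, continuity of the invasion speed---as in the argument that $\ti{c} \to 0$ in the proof of Proposition~\ref{prop:strip-TW}---gives $\ubar{c}_* \to c_*$; so, taking $\ell_0$ large (i.e.\ $\eta$ small) and $\eps$ small, I may assume $c < \ubar{c}_* < c_1 < c_*$. Let $B \subset \hp$ be the ball of radius $R$ about $(\tbf{0}, \ell_0 + R)$, with $R$ chosen large enough that the whole-space solution of $\partial_t w = \Delta w + \ubar{f}(w)$ from $m\,\tbf{1}_B - \eps\,\tbf{1}_{\R^{d+1} \setminus B}$ invades (possible by \cite{AW, FM}). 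By \eqref{eq:lower-ASP-slab} with $\ell = \ell_0 + 2R$ and speed $c_1$, there is a (large) time $s_0$ with $u(s_0, \anon) > m\,\tbf{1}_B$.

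\textbf{The spreading subsolution and comparison.}
Now let $\ubar{W}$ solve $\partial_t \ubar{W} = \Delta \ubar{W} + \ubar{f}(\ubar{W})$ in $(s_0, \infty) \times \R^{d+1}$ with $\ubar{W}(s_0, \anon) = m\,\tbf{1}_B - \eps\,\tbf{1}_{\R^{d+1}\setminus B}$, so $-\eps \leq \ubar{W} \leq m$; since $\ubar{f} \leq f$ on $[-\eps, m]$ (using $f \equiv 0$ on $[-\eps, 0]$), $\ubar{W}$ is a subsolution of $\partial_t u = \Delta u + f(u)$. By whole-space invasion theory \cite{AW, FM}, $\ubar{W} \to m$ locally uniformly, so for $c'' \in (c, \ubar{c}_*)$ we get $\ubar{W}(t, \tbf{z}) > 1 - \eta$ once $\abs{\tbf{z} - (\tbf{0}, \ell_0 + R)} \leq c''(t - s_0)$ and $t$ is large; and, using a planar $\ubar{f}$-wave at speed $\ubar{c}_*$ as a radial supersolution, $\ubar{W} \leq 0$ outside the ball of radius $\ubar{c}_* t + O(1)$ about $(\tbf{0}, \ell_0 + R)$. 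I then compare $u$ and $\ubar{W}$ on $(s_0, \infty) \times (\R^d \times (\ell_0, \infty))$. At $t = s_0$, $u > m\,\tbf{1}_B \geq \ubar{W}$. On the interior slice $\{y = \ell_0\}$: where $\abs{\tbf{x}'} \leq c_1 t$, \eqref{eq:lower-ASP-slab} gives $u > \varphi(\ell_0) - \eta/4 = m \geq \ubar{W}$; where $\abs{\tbf{x}'} > c_1 t$ and $t$ is large, the point lies outside the region where $\ubar{W} > 0$ (as $c_1 > \ubar{c}_*$), so $\ubar{W} \leq 0 \leq u$. Crucially, this comparison never touches $\partial\hp$, so it is insensitive to whether the boundary condition is Dirichlet or Robin. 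Hence $u \geq \ubar{W}$ on $\{t > s_0,\ y > \ell_0\}$; and if $\abs{(\tbf{x}', y)} \leq ct$ with $y > \ell_0$, then $\abs{(\tbf{x}', y) - (\tbf{0}, \ell_0 + R)} \leq ct + \ell_0 + R \leq c''(t - s_0)$ for $t$ large, so $u(t, \tbf{x}', y) \geq \ubar{W}(t, \tbf{x}', y) > 1 - \eta$. This proves the claim, hence \eqref{eq:lower-ASP}.

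\textbf{Main obstacle.}
The crux is the absorbing boundary $\partial\hp$: a subsolution that spreads through $\hp$ inevitably reaches $\partial\hp$, where it must not exceed the (Dirichlet or Robin) trace of $u$. I circumvent this by comparing only on $\{y > \ell_0\}$, replacing $\partial\hp$ with the interior surface $\{y = \ell_0\}$, where \eqref{eq:lower-ASP-slab} pins $u$ from below by $m$; this in turn forces $\ubar{f}$ to be capped at $m < 1$ (so $\ubar{W} \leq m$ on that surface) and to carry a negative tail (so $\ubar{W} \leq 0$ past its front, where $u$ is uncontrolled). The remaining delicate point is that the modified speed $\ubar{c}_*$ must stay above $c$, which I obtain from continuity of the invasion speed under $\ubar{f} \to f$.
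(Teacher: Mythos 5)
Your proof is correct and follows essentially the same route as the paper: shift the comparison domain up to a half-space $\{y > \ell\}$ that avoids $\partial\hp$, control $u$ on the moved boundary $\{y = \ell\}$ via the slab estimate, and feed in a spreading subsolution built from a reduced reaction $f_1 \leq f$ that is capped below $1$ and carries a negative tail so the subsolution drops below $0$ past its front, where the slab estimate gives no information. The one substantive difference is the choice of subsolution. The paper uses the explicit radial profile $w(t,\tbf{x}) = U_1(|\tbf{x}| - c't)$ built directly from the one-dimensional $f_1$-wave; the subsolution check is a one-line curvature estimate on $\R^{d+1}\setminus B_{R_1}$, and since $U_1(0)=0$ the initial trace $w(0,\anon)\leq 0$ automatically lies below $u$, so no launching step is needed. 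You instead run the full whole-space parabolic evolution $\ubar{W}$ from a compactly supported datum inside $\{y>\ell_0\}$; this requires an extra preparatory use of the slab estimate to verify $u(s_0,\anon) > m\,\tbf{1}_B$ at the launch time, plus appeals to whole-space invasion and spreading-speed results from \cite{AW, FM}, both to push $\ubar{W}$ up to $m$ and to confine $\ubar{W}\leq 0$ outside a ball expanding at speed $\ubar{c}_*$. Both are sound; the paper's version is a bit more self-contained (the subsolution property is verified by hand and no separate spreading theorem is needed), while yours is closer in spirit to how the ball-launching step is run in Lemma~\ref{lem:invasion-ign-bi} and is arguably more transparent about why the speed constant comes out right. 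One small point worth making explicit is the order of quantifiers: you must pick $s_0$ large enough that the slab bound $u \geq m$ on $\{y=\ell_0,\ |\tbf{x}'| \leq c_1 t\}$ holds for \emph{all} $t \geq s_0$, not merely at $t=s_0$, since the comparison on $\{y>\ell_0\}$ runs forward in time from $s_0$; the $\limsup$ in \eqref{eq:lower-ASP-slab} supplies exactly this, but it is worth saying so.
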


\begin{proof}
  Fix $0 \leq c < c' < c_*$ and $\eta > 0$.
  Given $\eps > 0$, we define a new reaction $f_1 \leq \ubar f \leq f$.
  We set $f_1 = \ubar{f}$ on $[-\eps, 1 - 2 \eps]$, and let $f_1$ smoothly connect to $0$ at $1 - \eps$ while remaining below $\ubar{f}$.
  Then $f_1$ is an ignition or bistable reaction on the interval $[-\eps, 1 - \eps]$ with unique one-dimensional speed $c_1$.
  As $\eps \searrow 0$, it is well known that $c_1 \to c_*$.
  Thus, there exists $\eps \in (0, \eta)$ such that $c_1 > c'$.
  Let $U_1$ denote the unique one-dimensional monotone traveling wave for $f_1$ connecting $1 - \eps$ to $-\eps$ such that $U_1(0) = 0$.
  
  Now fix $R_1 \coloneqq \frac{d}{c_1 - c'}$.
  As in the proof of Proposition~\ref{prop:lower-slab-ign-bi}, we can check that
  \begin{equation*}
    w(t, \tbf{x}) \coloneqq U_1\big(\abs{\tbf{x}} - c't\big)
  \end{equation*}
  is a subsolution to the equation $\partial_t w = \Delta w + f(w)$ on $\R^{d + 1} \setminus B_{R_1}$, where $B_{R_1}$ denotes the $(d+1)$-dimensional ball of radius $R_1$ centered at the origin.
  We use $w$ to push $u$ toward $1 - \eps$ out to radius $ct$.

  Since $\varphi(+\infty) = 1$, there exists $\ell \geq R_1$ such that $1 - \varphi \leq \frac \eps 2$ in $[\ell, \infty)$.
  Also, by hypothesis, $u$ satisfies \eqref{eq:lower-ASP-slab} with $c'$ in place of $c$.
  Hence, there exists $T_1 > 0$ such that
  \begin{equation}
    \label{eq:line-bound-ign-bi}
    \inf_{\abs{\tbf{x}'} \leq c't} u(t, \tbf{x}', \ell) \geq 1 - \eps  \quad \text{for all } t \geq T_1.
  \end{equation}
  Now let $\hp_\ell \coloneqq \{(\tbf{x}', y) \in \R^{d + 1} \mid y > \ell\}$ denote the $\ell$-shifted half-space.
  Since $w \leq 1 - \eps$, \eqref{eq:line-bound-ign-bi} ensures that $w \leq u$ on $\partial \hp_\ell \cap B_{c't}$ when $t \geq T_1$.
  Moreover, $w \leq 0 \leq u$ on $\partial \hp_\ell \cap (\R^{d + 1} \setminus B_{c't})$, so in fact $w \leq u$ on the entire boundary $\partial \hp_\ell$ when $t \geq T_1$.
  Also, $w(0, \anon) \leq 0 \leq u(T_1, \anon)$ in $\hp$.
  Thus by the comparison principle,
  \begin{equation*}
    u(t + T_1, \tbf{x}) \geq w(t, \tbf{x}) \quad \text{for all } (t, \tbf{x}) \in [0, \infty) \times \hp_\ell.
  \end{equation*}

  Finally, we are interested in $\tbf{x}$ such that $\abs{\tbf{x}} \leq ct$.
  For such $\tbf{x}$,
  \begin{equation*}
    w(t, \tbf{x}) \geq U_1(-(c' - c)t) \to 1 - \eps \quad \textrm{as } t \to \infty.
  \end{equation*}
  Since $\eps < \eta$ and $1 - \eps \leq \varphi \leq 1$ in this region, there exists $T_2 > T_1$ such that
  \begin{equation}
    \label{eq:upper-circle-ign-bi}
    \sup_{t \geq T_2} \sup_{\substack{\abs{(\tbf{x}', y)} \leq ct \\ y \geq \ell}} \big[\varphi(y) - u(t, \tbf{x}', y)\big] \leq \eta.
  \end{equation}
  In fact, this can be extended to $y \in [0, \ell]$, since $u$ satisfies \eqref{eq:lower-ASP-slab}.
  An upper bound follows as in the proof of Proposition~\ref{prop:lower-slab-ign-bi}.
  Since $\eta > 0$ in \eqref{eq:upper-circle-ign-bi} was arbitrary, we obtain \eqref{eq:lower-ASP}.
\end{proof}

Finally, Theorem~\ref{thm:ASP}\ref{item:ASP-ign-bi} follows from Lemma~\ref{lem:invasion-ign-bi} and Propositions~\ref{prop:upper}, \ref{prop:lower-slab-ign-bi}, and \ref{prop:ASP-interior}.

\section{Ignition and bistable traveling waves}
\label{sec:TW-ign-bi}

We are now in a position to construct traveling waves for ignition and bistable reactions.
As noted in the introduction, it suffices to consider waves in two spatial dimensions, so we assume $d = 1$ and denote position by $\tbf{x} = (x, y) \in \R^2$.

\begin{proof}[Proof of Theorem~\textup{\ref{thm:TW}\ref{item:TW-ign-bi}}]
  First, let $f$ be ignition or bistable with $\varrho = 0$.
  Define
  \begin{equation*}
    \theta_1 \coloneqq \frac{1}{2}\varphi(1) \in (0, 1).
  \end{equation*}
  Recall $A_{\trm{TW}}$ and the traveling waves $(\Phi_L, c_L)$ from Proposition~\ref{prop:strip-TW}.
  By Lemma~\ref{lem:strip-ODE-ign-bi}, there exists $A \geq A_{\trm{ODE}}$ such that $\varphi_L(1) > \theta_1$ for all $L > A$.
  Then define $x_L$ by
  \begin{equation}
    \label{eq:pin-near-boundary}
    \Phi_L(x_L, 1) = \theta_1,
  \end{equation}
  which exists because
  \begin{equation*}
    \Phi_L(-\infty, 1) = \varphi_L(1) > \theta_1 > 0 = \Phi_L(+\infty, 1).
  \end{equation*}
  We consider the sequence $\big(\Phi_L( \anon + x_L, \anon), c_L\big)_{L > A}$ as $L \to \infty$.
  We have \eqref{eq:pin-near-boundary} and $c_L \in [\gamma, c_*]$.
  It follows from elliptic estimates and Lemma~\ref{lem:speed-convergence} that there exists a locally uniform subsequential limit $(\Phi, c_*)$ satisfying
  \begin{equation*}
    \begin{cases}
      \Delta \Phi + c_* \partial_x \Phi + f(\Phi) = 0 & \textrm{in } \hp,\\
      \Phi = 0 & \textrm{on } \partial \hp.
    \end{cases}
  \end{equation*}
  Furthermore, $0 \leq \Phi \leq 1$, $\partial_x \Phi \leq 0$, and $\Phi(0, 1) = \theta_1$.
  Since $\partial_y \Phi_L > 0$ on $\R \times (0, L/2)$, we also have $\partial_y \Phi \geq 0$.

  Now, the monotonicity in $x$ ensures that the limits $\Phi(\pm\infty, \anon)$ exist and satisfy the ODE \eqref{eq:steady-ODE}.
  Furthermore, the limits are bounded and satisfy
  \begin{equation}
    \label{eq:limit-separation}
    \Phi(-\infty, 1) \geq \frac 1 2 \varphi(1) \geq \Phi(+\infty, 1).
  \end{equation}
  By Lemma~\ref{lem:steady-ODE-ign-bi}, the only bounded solutions to \eqref{eq:steady-ODE} are $0$ and $\varphi$.
  By \eqref{eq:limit-separation}, we must have
  \begin{equation*}
    \Phi(-\infty, \anon) = \varphi \And \Phi(+\infty, \anon) = 0.
  \end{equation*}
  Therefore, $\Phi$ is a traveling wave of speed $c_*$.
  Moreover, the strong maximum principle implies that $0 < \Phi < 1$, $\partial_x \Phi < 0,$ and $\partial_y \Phi > 0$ in $\hp$.

  If $f$ is ignition and $\varrho > 0$, we instead use the symmetric Robin waves $(\Phi_L^{\mathrm{sym}}, c_L^{\mathrm{sym}})$ from Section~\ref{sec:strip-TW}.
  By Proposition~\ref{prop:omnibus-TW-ign-sym}, the argument goes through as above.
  This establishes existence in Theorem~\ref{thm:TW}\ref{item:TW-ign-bi} at the speed $c_*$.

  For nonexistence at slower speeds, we appeal to Theorem~\ref{thm:ASP}\ref{item:ASP-ign-bi}.
  Suppose for the sake of contradiction that $\Psi$ is a traveling wave of speed $c \in [0, c_*)$, and fix $c' \in (c, c_*)$.
  Since $\Psi(-\infty, \anon) = \varphi$ and $\varphi(+\infty) = 1$, the wave is arbitrarily close to $1$ on arbitrarily large balls.
  Hence, given $\delta \in (0, 1 - \theta)$, there exists a ball $B \subset \hp$ of the radius $R_{\trm{steady}}(\delta) > 0$ from Theorem~\ref{thm:steady}\ref{item:steady-ign-bi} such that $\Psi \geq (\theta + \delta) \tbf{1}_B$.
  Let $u$ solve \eqref{eq:main} with $u_0 = (\theta + \delta) \tbf{1}_B$.
  Then Theorem~\ref{thm:ASP}\ref{item:ASP-ign-bi} shows that $u$ eventually approaches $\varphi$ in balls of radius $c't$.
  So $u$ overtakes the traveling wave solution $\Psi(x - ct, y)$, which connects to $0$ and travels slower.
  That is, there exists $(T, x, y) \in [0, \infty) \times \hp$ such that $u(T, x, y) > \Psi(x - cT, y)$, contradicting the comparison principle.
  Thus, there do not exist waves slower than $c_*$.
\end{proof}
This concludes our analysis of ignition and bistable reactions.

\section{Monostable steady states}
\label{sec:steady-mono}

In the remaining sections, we study monostable reactions.
We begin by proving the existence and uniqueness of a nonzero bounded steady state in $\R_+$.
\begin{lemma}
  \label{lem:steady-ODE-mono}
  Let $f$ be monostable with $\varrho \in [0, \infty)$.
  Then there exists a unique nonzero bounded solution $\varphi$ to the ODE \eqref{eq:steady-ODE}.
  Furthermore, $\varphi$ satisfies $0 \leq \varphi < 1$, $\varphi' > 0$, and $\varphi(+\infty) = 1$.
\end{lemma}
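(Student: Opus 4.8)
The plan is to run the shooting argument from the proof of Lemma~\ref{lem:steady-ODE-ign-bi}, replacing the auxiliary function $\Lambda$ by a monotonicity fact that is available here because $f \geq 0$ on $[0,1]$ (hypothesis \ref{hyp:mono}). For $\alpha \in \R$ let $\phi^\alpha$ denote the solution of the initial value problem $(\phi^\alpha)'' + f(\phi^\alpha) = 0$ with $\phi^\alpha(0) = \varrho\alpha$ and $(\phi^\alpha)'(0) = \alpha$; since $f$ is piecewise $\m C^1$, hence locally Lipschitz, $\phi^\alpha$ is unique. I would first discard $\alpha \leq 0$, for then $\phi^\alpha$ is affine and nonpositive, hence either identically $0$ or unbounded; and, when $\varrho > 0$, also discard $\alpha \geq \varrho^{-1}$, for then $\phi^\alpha(0) \geq 1$ with positive slope, so $\phi^\alpha$ is affine and increasing, hence unbounded. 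Thus any nonzero bounded solution arises from an $\alpha > 0$ with $\varrho\alpha \in [0,1)$.

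For such $\alpha$, the solution $\phi^\alpha$ starts in $[0,1)$ with positive slope and is concave wherever it lies in $(0,1)$, since there $(\phi^\alpha)'' = -f(\phi^\alpha) < 0$. Multiplying by $(\phi^\alpha)'$ and integrating gives the first integral $[(\phi^\alpha)'(y)]^2 = \alpha^2 - 2\int_{\varrho\alpha}^{\phi^\alpha(y)} f$, valid while $\phi^\alpha(y) \in [0,1)$. Writing $G(\alpha) \coloneqq 2\int_{\varrho\alpha}^1 f > 0$, I would distinguish three cases. If $\alpha^2 > G(\alpha)$, then $\phi^\alpha$ climbs through the whole interval, reaches the value $1$ with positive slope, and thereafter (where $f \equiv 0$) runs off affinely to $+\infty$. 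If $\alpha^2 < G(\alpha)$, then, since $s \mapsto 2\int_{\varrho\alpha}^s f$ is continuous and increases from $0$ to $G(\alpha)$, the right-hand side of the first integral vanishes at some $s_0 \in (\varrho\alpha, 1)$; there $\phi^\alpha$ has a local maximum, then decreases, remains concave in $(0,1)$, so it crosses $0$ with negative slope and then escapes affinely to $-\infty$. Hence $\phi^\alpha$ is bounded and nonzero precisely when $\alpha^2 = G(\alpha)$, in which case the first integral forces $(\phi^\alpha)' > 0$ while $\phi^\alpha < 1$, so $\phi^\alpha$ increases monotonically to a zero of $f$ in $(0,1]$, necessarily $1$; hypothesis \ref{hyp:mono-derivs} at $s = 1$ then prevents $\phi^\alpha$ from attaining $1$ in finite position, so $\phi^\alpha < 1$ throughout.

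It remains to locate the critical slope. I would set $H(\alpha) \coloneqq \alpha^2 - G(\alpha)$ and compute $H'(\alpha) = 2\alpha + 2\varrho f(\varrho\alpha) > 0$ for $\alpha > 0$, so $H$ is strictly increasing on the admissible range. Since $H(0) = -2\int_0^1 f < 0$ by \ref{hyp:mono}, while $H(\alpha) \to +\infty$ as $\alpha \to \infty$ if $\varrho = 0$ and $H(\alpha) \to \varrho^{-2} > 0$ as $\alpha \nearrow \varrho^{-1}$ if $\varrho > 0$ (then $\int_{\varrho\alpha}^1 f \to 0$), the intermediate value theorem gives a unique $\bar\alpha$ with $H(\bar\alpha) = 0$. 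Then $\varphi \coloneqq \phi^{\bar\alpha}$ is the unique nonzero bounded solution of \eqref{eq:steady-ODE}, and by the previous paragraph it satisfies $\varphi(0) = \varrho\bar\alpha \geq 0$, $0 \leq \varphi < 1$, $\varphi' > 0$, and $\varphi(+\infty) = 1$.

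I expect no serious obstacle. The only points requiring care are the verification that sub- and super-critical solutions are genuinely unbounded — in the subcritical case one must follow $\phi^\alpha$ past its maximum and confirm it leaves $(0,1)$ downward — and the endpoint behavior of $H$ at $\alpha = \varrho^{-1}$ when $\varrho > 0$, both of which follow immediately from the first integral and \ref{hyp:mono}. This is considerably simpler than the ignition and bistable strip analysis, where the length map $\alpha \mapsto L^\alpha$ fails to be monotone.
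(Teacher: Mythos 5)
Your proof is correct and follows essentially the same route as the paper's. The paper first shows any nonzero bounded solution satisfies $\varphi([0,\infty)) \subset [0,1]$, concavity, $\varphi'(0) > 0$, and hence $\varphi \nearrow 1$, and then invokes the uniqueness argument of Lemma~\ref{lem:steady-ODE-ign-bi}: multiply by $\phi'$, integrate, and use the boundary condition to pin down $\phi(0)$ via the auxiliary function $\Lambda(s) = \tfrac{2}{s^2}\int_s^1 f$, which is strictly decreasing because $f \geq 0$ on $[0,1]$.

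You reformulate the same first-integral argument as a shooting over the initial slope $\alpha$: the bounded solutions are characterized by the scalar equation $\alpha^2 = G(\alpha) \coloneqq 2\int_{\varrho\alpha}^1 f$, and you establish uniqueness from $H'(\alpha) = 2\alpha + 2\varrho f(\varrho\alpha) > 0$ together with the sign change of $H = \alpha^2 - G$ at the endpoints. This is the same monotonicity the paper extracts from $\Lambda$ (indeed $H(\alpha) = \alpha^2\bigl(1 - \varrho^2\Lambda(\varrho\alpha)\bigr)$ when $\varrho > 0$), relying on the same hypothesis $f \geq 0$; your packaging has the small advantage of handling $\varrho = 0$ and $\varrho > 0$ in a single formula and making the failure mode for bistable $\varrho > 0$ transparent (there $f(\varrho\alpha)$ can be negative). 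One cosmetic remark: the reason $\phi^{\bar\alpha}$ cannot attain $1$ at finite $y$ is ODE uniqueness (the constant $1$ solves the equation through $(1,0)$), which follows already from $f$ being piecewise $\m C^1$; hypothesis \ref{hyp:mono-derivs} is not really the operative assumption there.
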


\begin{proof}
  Suppose $\varphi$ is a nonzero bounded solution.
  Since $f$ vanishes outside $[0, 1]$, $\varphi$ becomes affine linear if it exits this interval.
  Then $\abs{\varphi}$ would grow without bound, so necessarily $\varphi([0, \infty]) \subset [0, 1]$.
  As a consequence of the boundary condition, we obtain $\varphi'(0) > 0$.
  By \ref{hyp:mono}, $\varphi$ is concave in $\R_+$.
  Hence, $\varphi$ increases towards a zero of $f$.
  This zero can only be $1$, so $0\leq \varphi < 1$, $\varphi'>0$, and $\varphi(+\infty) = 1$.
  
  Existence and uniqueness follow as in the proof of Lemma~\ref{lem:steady-ODE-ign-bi}.
\end{proof}

We now use the sliding method to extend uniqueness to the half-space ${\hp \subset \R^{d + 1}}$.
\begin{proof}[Proof of Theorem~\textup{\ref{thm:steady}}\ref{item:steady-mono}]
  The one-dimensional solution $\varphi$ from Lemma~\ref{lem:steady-ODE-mono} is a nonzero bounded steady state, so we need only establish uniqueness. 

  Let $\phi$ be some nonzero bounded solution of the steady state equation
  \begin{equation}
    \label{eq:steady}
    \begin{cases}
      \Delta \phi + f(\phi) = 0 & \textrm{in } \hp,\\
      \partial_y \phi = \varrho^{-1} \phi & \textrm{on } \partial \hp.
    \end{cases}
  \end{equation}
  Arguing as in the proof of Proposition~\ref{prop:steady-ign-bi-sub}, the comparison principle implies that $0 \leq \phi \leq \varphi$.

  We now define
  \begin{equation*}
    \rho \coloneqq \inf_{s \in (0, \, 1/2]} \frac{f(s)}{s}.
  \end{equation*}
  By \ref{hyp:mono} and \ref{hyp:mono-derivs}, $\rho > 0$.
  Therefore, there exists $R > 0$ such that $\rho$ is the principal Dirichlet eigenvalue of $-\Delta$ in the $(d + 1)$-dimensional ball $B$ of radius $R$.
  Let $v$ denote the corresponding positive eigenfunction, normalized by $\norm{v}_\infty = 1$.
  We extend $v$ by $0$ outside $B$, and define its translation $\tau_{\tbf{x}} v \coloneqq v(\anon - \tbf{x})$ for
  \begin{equation*}
    \tbf{x} \in \hp_R \coloneqq \big\{(\tbf{x}', y) \in \R^{d + 1} \mid y > R\big\}.
  \end{equation*}

  By the strong maximum principle, $\phi > 0$ in $\hp$.
  Thus, there exist $\tbf{x}_0 \in \hp_R$ and $\eps \in (0, 1/2]$ such that $\eps \tau_{\tbf{x}_0} v \leq \phi$.
  By the normalization of $v$ and the definition of $\rho$, $\eps \tau_{\tbf{x}_0} v$ is a subsolution to \eqref{eq:steady}.
  Hence, $\eps \tau_{\tbf{x}_0} v < \phi$ by the strong maximum principle.
  If we slide $\tbf{x}_0$ around within $\hp_R$, the strong maximum principle further implies that $\eps \tau_{\tbf{x}_0} v$ cannot touch $\phi$ from below.
  So in fact
  \begin{equation*}
    V \coloneqq \eps \sup_{\tbf{x}_0 \in \hp_R} \tau_{\tbf{x}_0} v \leq \phi.
  \end{equation*}
  Note that $V$ is independent of $\tbf{x}'$ and positive in $\hp$.
  Furthermore, as a supremum of subsolutions, $V$ itself is a subsolution to \eqref{eq:steady}.

  Again, $V$ evolves under \eqref{eq:main} towards a bounded solution of \eqref{eq:steady-ODE}.
  But as a subsolution, $V$ increases under the evolution \eqref{eq:main}.
  Since $V > 0$, its long-time limit cannot be $0$.
  By Lemma~\ref{lem:steady-ODE-mono}, this limit is $\varphi$.
  Thus by the comparison principle, $\varphi \leq \phi \leq \varphi$.
\end{proof}

\section{Monostable spreading}
\label{sec:ASP-mono}

We now consider the evolution of $u$ under \eqref{eq:main} from compactly supported initial data.
By Proposition~\ref{prop:upper}, we need only prove the lower bound \eqref{eq:lower-ASP}.
We begin by establishing the hair-trigger effect.
\begin{lemma}
  \label{lem:invasion-mono}
  Let $f$ be monostable with $\varrho \in [0, \infty).$
  If $0 \lneqq u_0 \leq 1$, then
  \begin{equation*}
    \lim_{t \to \infty} u(t, \anon) = \varphi
  \end{equation*}
  locally uniformly in $\hp$.
\end{lemma}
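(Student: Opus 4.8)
The plan is to trap $u(t,\cdot)$ between $\varphi$ from above and below as $t \to \infty$. The upper bound is immediate: letting $u^1$ solve \eqref{eq:main} from the constant supersolution $u_0 \equiv 1$, the function $u^1$ is independent of $\tbf{x}'$ and nonincreasing in $t$, and by Lemma~\ref{lem:steady-ODE-mono} and the comparison principle it decreases to $\varphi$; since $\varphi(+\infty) = 1$ the convergence is in fact uniform on $\bar\hp$, and $u \le u^1$ gives $\limsup_{t\to\infty} u(t,\cdot) \le \varphi$ uniformly. It thus remains to prove the lower bound $\liminf_{t\to\infty} u(t,\cdot) \ge \varphi$ locally uniformly, which I would deduce by comparing $f$ from below with ignition reactions and applying Lemma~\ref{lem:invasion-ign-bi}.

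The crucial preliminary is a spreading estimate that exploits the monostable structure near $0$: \emph{for every $m \in (0, 1)$ and $R > 0$ there exist $T > 0$ and a ball $B \subset \hp$ of radius $R$ with $u(T, \cdot) \ge m$ on $B$.} By \ref{hyp:mono} and \ref{hyp:mono-derivs} the quotient $f(s)/s$ is bounded below by some $\rho > 0$ on $(0, m]$, and by the strong maximum principle $u(1, \cdot) > 0$ throughout $\hp$. Given a ball $B_{R'}$ whose closure lies in $\hp$, with $R'$ large, let $v \ge 0$ be the principal Dirichlet eigenfunction of $-\Delta$ on $B_{R'}$, normalized by $\norm{v}_\infty = 1$ and extended by $0$; for $R'$ large the eigenvalue $\lambda_1$ satisfies $\lambda_1 < \rho$, so $\eps v$ is a compactly supported stationary subsolution of \eqref{eq:main} for every $\eps \in (0, m]$, and we may fix $\eps$ so that $u(1, \cdot) \ge \eps v$. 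Then the Dirichlet parabolic flow on $B_{R'}$ from $\eps v$ increases in $t$ to the minimal positive Dirichlet steady state $Z_{R'}$ on $B_{R'}$, and $u(1 + t, \cdot) \ge$ this flow by comparison. A routine sliding and comparison argument shows $Z_{R'}$ is nondecreasing in $R'$; its locally uniform limit as $R' \to \infty$ is a nontrivial bounded nonnegative entire steady state of $\Delta w + f(w) = 0$ in $\R^{d+1}$, which the whole-space hair-trigger effect \cite{AW} forces to equal $1$. Hence $Z_{R'} \to 1$ locally uniformly, and choosing $R'$ so large that $Z_{R'} > m$ on the concentric ball of radius $R$, and then $t$ large, proves the claim.

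Granting the claim, fix a small $\delta > 0$ and an ignition reaction $g \le f$ on $[0, 1]$ with ignition temperature $\theta_g$ small enough that $\delta < 1 - \theta_g$. Applying the claim with $m = \theta_g + \delta$ and $R$ equal to the radius from Lemma~\ref{lem:invasion-ign-bi} associated to $g$ and $\delta$, we obtain $T$ and a ball $B$ with $u(T, \cdot) \ge (\theta_g + \delta) \tbf{1}_B$. Since $g \le f$, the solution of the $g$-equation from $(\theta_g + \delta)\tbf{1}_B$ is a subsolution of \eqref{eq:main}, so Lemma~\ref{lem:invasion-ign-bi} applied to $g$, together with the comparison principle, yields $\liminf_{t\to\infty} u(t, \cdot) \ge \varphi_g$ locally uniformly, where $\varphi_g$ denotes the half-line steady state of $g$. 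Finally take ignition reactions $g_n \le f$ increasing to $f$ locally uniformly with $\theta_{g_n} \searrow 0$. Since $\varphi$ is a supersolution of each $g_n$-steady equation, the comparison principle and Lemma~\ref{lem:steady-ODE-ign-bi} give $\varphi_{g_n} \le \varphi$, and the $\varphi_{g_n}$ increase to a nontrivial bounded solution of \eqref{eq:steady-ODE}, necessarily $\varphi$ by Lemma~\ref{lem:steady-ODE-mono}, the convergence being locally uniform by Dini. Therefore $\liminf_{t\to\infty} u(t, \cdot) \ge \varphi$ locally uniformly, and combined with the upper bound this gives $u(t, \cdot) \to \varphi$ locally uniformly.

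The main obstacle is the spreading estimate, and within it the Liouville-type identification $\lim_{R'\to\infty} Z_{R'} \equiv 1$: one must exclude the possibility that the minimal positive Dirichlet steady states on large balls stabilize at a level strictly below $1$, which genuinely uses the whole-space hair-trigger effect (or an equivalent sliding argument) and not merely the positivity of $f$ near $0$. The other ingredients — the upper bound via $u^1$, the eigenfunction subsolution, and the reduction to Lemma~\ref{lem:invasion-ign-bi} — are routine given the results already established.
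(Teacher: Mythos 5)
Your proof is correct but takes a genuinely different and considerably longer route than the paper's. The paper, having already established Theorem~\ref{thm:steady}\ref{item:steady-mono} (uniqueness of the nonzero bounded steady state in $\hp$), argues in two lines: $u(1,\cdot)>0$ dominates a small translate of a principal Dirichlet eigenfunction subsolution $\eps\tau_{\mathbf{x}_0}v$; evolving this subsolution and the supersolution $1$ sandwiches $u$ between two solutions whose long-time limits are nonzero bounded steady states, hence both equal $\varphi$ by the uniqueness theorem. You instead avoid invoking Theorem~\ref{thm:steady}\ref{item:steady-mono} by (i) proving a spreading estimate — $u$ eventually exceeds any $m<1$ on any large ball in $\hp$ — via Dirichlet eigenfunction subsolutions on nested balls, the increasing limits $Z_{R'}$, and a Liouville identification $Z_{R'}\to 1$ from the whole-space hair-trigger of \cite{AW}; and then (ii) reducing to the ignition invasion Lemma~\ref{lem:invasion-ign-bi} via an increasing family of ignition reactions $g_n\nearrow f$ with $\theta_{g_n}\searrow 0$, using Lemma~\ref{lem:steady-ODE-ign-bi} and Lemma~\ref{lem:steady-ODE-mono} to pass to the limit $\varphi_{g_n}\nearrow\varphi$ by Dini. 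Both are sound, but your spreading estimate is essentially re-deriving, by a longer dynamical argument, what the sliding argument in the proof of Theorem~\ref{thm:steady}\ref{item:steady-mono} already provides. Two details you call ``routine'' do deserve care: the monotonicity $Z_{R'}\le Z_{R''}$ is cleanest obtained by comparing the parabolic flows (the Dirichlet flow on the smaller ball, extended by zero, is a subsolution on the larger ball) rather than by elliptic comparison alone, since elliptic comparison is not automatic here; and the locally uniform limit of $Z_{R'}$ requires recentering the moving balls before passing to the Liouville theorem. What your approach buys is a closer structural parallel to the rest of Section~\ref{sec:ASP-mono}, where the ignition machinery also carries the monostable spreading rate via approximating reactions; but for this particular lemma the paper's direct appeal to the uniqueness theorem is strictly simpler.
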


\begin{proof}
  In the proof of Theorem~\ref{thm:steady}\ref{item:steady-mono}, we used the principal Dirichlet Laplacian eigenfunction $v$ on a ball of radius $R$.
  By construction, $\eps \tau_{\tbf{x}_0}v$ is a subsolution to \eqref{eq:main} for all $\eps \in [0, 1/2]$ and $\tbf{x}_0 \in \hp_{R}$.
  
  By the Harnack inequality, $u(1, \anon) > 0$ in $\hp$.
  Thus it lies above $\eps \tau_{\tbf{x}_0} v$ for some $\eps > 0$ and $\tbf{x}_0 \in \hp_{R}$.
  If we evolve \eqref{eq:main} from initial data $\eps \tau_{\tbf{x}_0} v$ and $1$, we sandwich $u$ between solutions which converge locally uniformly to nonzero bounded steady states.
  By Theorem~\ref{thm:steady}\ref{item:steady-mono}, $\varphi$ is the unique such state.
  Thus, $u(t, \anon) \to \varphi$ locally uniformly as $t \to \infty$.
\end{proof}

We wish to upgrade this convergence to the quantitative lower bound \eqref{eq:lower-ASP}.
By Proposition~\ref{prop:ASP-interior}, it suffices to show convergence on a slab near $\partial \hp$.
We again use a ``reduced'' reaction $\ubar{f}$ and its corresponding steady states and traveling waves.
We let $\ubar{f} \coloneqq f$, but we view the reaction on the larger interval $[-\eps, 1]$, where it is ignition.
\begin{lemma}
  \label{lem:eps-steady-comparison}
  For each $\eps > 0$, $\ubar{\varphi} < \varphi$.
\end{lemma}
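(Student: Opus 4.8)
The plan is to treat $\ubar{\varphi}$ as a stationary subsolution of the one-dimensional parabolic problem attached to $\varphi$, let it evolve, and invoke the uniqueness of the nonzero steady state from Lemma~\ref{lem:steady-ODE-mono}. Recall that $\ubar{f} = f$, so $\ubar{\varphi}$ solves $\ubar{\varphi}'' + f(\ubar{\varphi}) = 0$ on $\R_+$ together with the shifted boundary condition $\ubar{\varphi}'(0) = \varrho^{-1}[\ubar{\varphi}(0) + \eps]$ (read as $\ubar{\varphi}(0) = -\eps$ when $\varrho = 0$), and by Lemma~\ref{lem:steady-ODE-ign-bi} applied to $\ubar{f}$ viewed as an ignition reaction on $[-\eps, 1]$ we have $-\eps \leq \ubar{\varphi} < 1$, $\ubar{\varphi}' > 0$, and $\ubar{\varphi}(+\infty) = 1$.

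First I would verify that $\ubar{\varphi}$ is a bounded subsolution of \eqref{eq:main} (in the variable $y$ alone) carrying the boundary condition of $\varphi$. In the bulk this is an equality. On the boundary, when $\varrho > 0$ the shifted condition gives $\ubar{\varphi}'(0) = \varrho^{-1}\ubar{\varphi}(0) + \varrho^{-1}\eps \geq \varrho^{-1}\ubar{\varphi}(0)$, which is precisely the subsolution inequality for the Robin condition $\partial_y u = \varrho^{-1} u$; when $\varrho = 0$, $\ubar{\varphi}(0) = -\eps \leq 0$. Since $\ubar{\varphi} < 1$ and the constant $1$ is a supersolution (because $f(1) = 0$), evolving the parabolic equation from the datum $\ubar{\varphi}$ produces a solution that is nondecreasing in $t$ and bounded above by $1$; it therefore converges to a bounded solution $\phi_\infty$ of \eqref{eq:steady-ODE} with $\phi_\infty \geq \ubar{\varphi}$. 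As $\ubar{\varphi}(+\infty) = 1$, the limit is nonzero, so Lemma~\ref{lem:steady-ODE-mono} forces $\phi_\infty = \varphi$, and hence $\ubar{\varphi} \leq \varphi$ on $[0, \infty)$.

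It remains to upgrade this to a strict inequality. The difference $w \coloneqq \varphi - \ubar{\varphi} \geq 0$ satisfies a linear equation $w'' + c(y) w = 0$ with $c \in L^\infty$ (since $f$ is Lipschitz and $\varphi, \ubar{\varphi}$ are bounded), so the strong maximum principle gives $w \equiv 0$ or $w > 0$ in $(0, \infty)$. The first option would force $\ubar{\varphi} \equiv \varphi$, which is incompatible with the boundary conditions: for $\varrho > 0$ it would yield $\ubar{\varphi}'(0) = \varphi'(0)$ while the two conditions give $\ubar{\varphi}'(0) = \varphi'(0) + \varrho^{-1}\eps$; for $\varrho = 0$ it would give $-\eps = \ubar{\varphi}(0) = \varphi(0) = 0$. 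Thus $w > 0$ in $(0, \infty)$, and at $y = 0$ either $w(0) = \eps > 0$ (when $\varrho = 0$), or, when $\varrho > 0$, a vanishing $w(0)$ would force the inward derivative $w'(0) \geq 0$, contradicting $w'(0) = \varrho^{-1} w(0) - \varrho^{-1}\eps = -\varrho^{-1}\eps < 0$. Hence $\ubar{\varphi} < \varphi$ on all of $[0, \infty)$.

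The only genuinely delicate point is the boundary bookkeeping in the comparison step — confirming that the $\eps$-shifted condition makes $\ubar{\varphi}$ a subsolution rather than a supersolution, and that strict inequality survives down to $y = 0$; the monotone convergence of the parabolic flow and the strong maximum principle for the linear ODE are routine and used repeatedly earlier in the paper. An alternative that avoids the parabolic flow is a phase-plane argument: both $\varphi$ and $\ubar{\varphi}$ lie on the single energy level $\tfrac{1}{2}(\phi')^2 + F(\phi) \equiv F(1) > 0$, so one is a translate of the other, while the $\eps$-shifted boundary line meets this (non-increasing) level curve at a strictly smaller value of $\phi(0)$; since the profiles cannot cross, $\ubar{\varphi} < \varphi$ follows. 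I would keep the comparison argument as the main proof, since it aligns with the methods used throughout.
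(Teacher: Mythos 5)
Your proof is correct, but it takes a different route from the paper. The paper first shows that $\varphi$ is increasing in the Robin parameter $\varrho$, and then reduces the $\eps$-shifted boundary condition to a Robin condition with a strictly smaller parameter $\ubar\varrho$ (handling separately the cases $\ubar\varphi(0) > 0$, $\ubar\varphi(0) = 0$, and $\ubar\varphi(0) < 0$, the last by translating $\ubar\varphi$ until it crosses zero and comparing with the Dirichlet solution). You instead observe directly that the $\eps$-shifted boundary condition makes $\ubar\varphi$ a \emph{subsolution} of \eqref{eq:steady-ODE} with the original boundary condition, evolve it parabolically, use the nondegenerate limit $\ubar\varphi(+\infty)=1$ together with Lemma~\ref{lem:steady-ODE-mono} to force convergence to $\varphi$, and then upgrade $\leq$ to $<$ via the strong maximum principle and Hopf at $y=0$. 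Your boundary bookkeeping is right: for $\varrho>0$ the shifted condition gives $\ubar\varphi'(0)-\varrho^{-1}\ubar\varphi(0)=\varrho^{-1}\eps>0$, which is the correct sign for a subsolution under the Robin condition, and for $\varrho=0$ you have $\ubar\varphi(0)=-\eps<0$. The paper's route has the small merit of extracting the monotonicity-in-$\varrho$ fact, which is structurally useful; yours avoids the case split on $\operatorname{sign}\ubar\varphi(0)$ and re-uses the parabolic-comparison-to-steady-state machinery already established repeatedly earlier in the paper. The phase-plane alternative you sketch (constant first integral $\tfrac12(\phi')^2+F(\phi)=F(1)$, so $\ubar\varphi$ is a rightward translate of $\varphi$ and the $\eps$-shifted boundary line meets the orbit at a smaller $\phi(0)$) is also valid and in some ways the cleanest of the three.
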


\begin{proof}
  We first note that $\varphi$ is increasing in the Robin parameter $\varrho$.
  Indeed, if ${\varrho^{(1)} < \varrho^{(2)}}$ correspond to solutions $\varphi^{(1)}$ and $\varphi^{(2)}$ of \eqref{eq:steady-ODE}, respectively, then $\varphi^{(1)}$ is a subsolution to the $\varrho^{(2)}$-equation.
  By Lemma~\ref{lem:steady-ODE-mono}, $\varphi^{(2)}$ is the unique nonzero bounded solution to the $\varrho^{(2)}$-equation.
  It follows that $\varphi^{(2)} > \varphi^{(1)}$.

  Now suppose that $\ubar{\varphi}(0) > 0$.
  Then we can write
  \begin{equation*}
    \ubar \varphi'(0) = \varrho^{-1}\frac{\ubar\varphi(0) + \eps}{\ubar\varphi(0)} \ubar\varphi(0) \eqqcolon \ubar\varrho^{-1} \ubar \varphi(0)
  \end{equation*}
  for some $\ubar \varrho < \varrho$.
  That is, $\ubar{\varphi}$ is the steady state for a smaller Robin parameter $\ubar{\varrho}$.
  Since $\varphi$ is increasing in $\varrho$, $\varphi > \ubar{\varphi}$.
  Taking $\ubar{\varrho} \searrow 0$, the same holds when $\ubar{\varphi}(0) = 0$.

  We are left with the case $\ubar{\varphi}(0) < 0$.
  Then there exists $\ubar{y} > 0$ such that $\ubar{\varphi}(\ubar{y}) = 0$.
  Since $\ubar{f} = f$, $\ubar{\varphi}(\anon + \ubar{y})$ is a Dirichlet solution to \eqref{eq:steady-ODE}.
  By the ordering in $\varrho$ and monotonicity in $y,$
  \begin{equation*}
    \varphi \geq \ubar\varphi(\anon + \ubar{y}) > \ubar{\varphi}.
    \qedhere
  \end{equation*}
\end{proof}

We can now show propagation near $\partial \hp$.
\begin{proposition}
  \label{prop:lower-slab-mono}
  Let $u$ solve \eqref{eq:main} with $0 \lneqq u_0 \leq 1$.
  Then for all $\ell > 0$ and $c \in [0, c_*)$, $u$ satisfies \eqref{eq:lower-ASP-slab}.
\end{proposition}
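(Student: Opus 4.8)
The plan is to follow the proof of Proposition~\ref{prop:lower-slab-ign-bi} almost verbatim, substituting the hair-trigger effect (Lemma~\ref{lem:invasion-mono}) for Lemma~\ref{lem:invasion-ign-bi}. For monostable $f$, the $\eps$-modification of Section~\ref{sec:ASP-mono} is $\ubar{f} = f$ itself, viewed as an ignition reaction on $[-\eps, 1]$, so the strip theory of Sections~\ref{sec:steady-ign-bi}--\ref{sec:strip-TW} applies to $\ubar{f}$: it has ODE steady states $\ubar{\varphi} \geq \ubar{\varphi}_L$ and a monotone strip traveling wave $(\ubar{\Phi}_L, \ubar{c}_L)$ connecting $\ubar{\varphi}_L$ to $-\eps$ with $\partial_x \ubar{\Phi}_L < 0$. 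Lemma~\ref{lem:speed-convergence} applied to $\ubar{f}$ gives $\ubar{c}_L \to \ubar{c}_*$ as $L \to \infty$, where $\ubar{c}_* \leq c_*$ denotes the one-dimensional ignition speed of $\ubar{f}$, while $\ubar{c}_* \to c_*$ as $\eps \searrow 0$ (the standard convergence of ignition approximations to the minimal monostable speed, already used in the proof of Proposition~\ref{prop:ASP-interior}). Moreover $\ubar{\varphi} < \varphi$ for every $\eps > 0$ by Lemma~\ref{lem:eps-steady-comparison}, and $\ubar{\varphi} \to \varphi$ uniformly on compacts as $\eps \searrow 0$ by continuous dependence of ODE solutions.

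First I would fix $0 \leq c < c' < c_*$, $\ell > 0$, $\eta > 0$, and an auxiliary $c'' \in (c', c_*)$. Since $\ubar{c}_* \to c_*$ and $\ubar{\varphi} \to \varphi$ on $[0, \ell]$ as $\eps \searrow 0$, I choose $\eps > 0$ so small that $\ubar{c}_* > c''$ and $\|\ubar{\varphi} - \varphi\|_{L^\infty[0,\ell]} < \eta/6$. Fixing this $\eps$, I then use $\ubar{c}_L \to \ubar{c}_* > c'$ and $\ubar{\varphi}_L \nearrow \ubar{\varphi}$ on $[0, L/2]$ to choose $L > \max\{\ubar{A}_{\trm{TW}}, 2\ell\}$ with $\ubar{c}_L > c'$ and $\|\ubar{\varphi}_L - \ubar{\varphi}\|_{L^\infty[0,\ell]} < \eta/6$; hence $\ubar{\varphi}_L > \varphi - \eta/3$ on $[0, \ell]$. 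On the compact interval $[0, L]$ one verifies $\ubar{\varphi}_L < \varphi$ exactly as in the proof of Proposition~\ref{prop:lower-slab-ign-bi} (using $\ubar{\varphi}_L \leq \ubar{\varphi} < \varphi$ on $[0, L/2]$ together with the profile structure of $\ubar{\varphi}_L$ near $y = L$), so there is a uniform gap between $\ubar{\varphi}_L$ and $\varphi$ on $[0, L]$.

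From here the argument is that of Proposition~\ref{prop:lower-slab-ign-bi} with no essential change. Lemma~\ref{lem:invasion-mono} gives $u(t, \anon) \to \varphi$ locally uniformly, so the uniform gap above furnishes a time $T$ with $u \geq \ubar{\varphi}_L$ on $[T, \infty) \times \bar{\ubar{B}} \times [0, L]$, where $\ubar{B} \subset \R^d$ is the ball of radius $\ubar{R} \coloneqq (d-1)/(\ubar{c}_L - c')$. Then $w(t, \tbf{x}', y) \coloneqq \ubar{\Phi}_L(|\tbf{x}'| - c'(t - T) - \ubar{R} + \ubar{x}, y)$ is a subsolution of the $f$-equation on the expanding annular cylinder $\{\ubar{R} < |\tbf{x}'| < \ubar{R} + c'(t - T),\ 0 < y < L\}$ --- here one uses $\partial_x \ubar{\Phi}_L \leq 0$, the inequality $\ubar{c}_L - c' - (d-1)/r \geq 0$ for $r \geq \ubar{R}$, and $\ubar{f} = f$ on $[-\eps, 1]$ --- and its data on $\partial\hp$, on the outer radius, and on $y = L$ are dominated by those of $u$, just as before. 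The comparison principle yields $u \geq w$ on the cylinder; on the region $|\tbf{x}'| \leq ct$ we are evaluating $\ubar{\Phi}_L$ far to the left, where $\ubar{\Phi}_L > \ubar{\varphi}_L - \eta/3 > \varphi - 2\eta/3$, so the $\liminf$ of $u - \varphi$ on the slab is at least $-\eta$. The matching upper bound comes from $u \leq u^1 \to \varphi$ uniformly, where $u^1$ solves \eqref{eq:main} from $u_0 \equiv 1$, decreases in $t$ to $\varphi$, and converges uniformly because $\varphi(+\infty) = 1$. Since $\eta > 0$ was arbitrary, \eqref{eq:lower-ASP-slab} follows.

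The only point that is not a direct transcription of Proposition~\ref{prop:lower-slab-ign-bi} --- and the one I expect to need the most care --- is the interleaving of the two limits $\eps \searrow 0$ and $L \to \infty$. In the ignition and bistable cases there was a strip traveling wave for $f$ itself whose speed tends to $c_*$, which allowed one to fix $L$ first and perturb in $\eps$ afterward; here no such wave is available, so one must instead choose $\eps$ first (using $\ubar{c}_* \to c_*$ and $\ubar{\varphi} \to \varphi$) and $L$ second (using $\ubar{c}_L \to \ubar{c}_*$ and $\ubar{\varphi}_L \to \ubar{\varphi}$), so that $\ubar{c}_L$ stays strictly above $c'$ while $\ubar{\varphi}_L$ stays within $\eta/3$ of $\varphi$ on $[0, \ell]$. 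This is pure bookkeeping rather than a new estimate, but it is the step where the monostable proof genuinely departs from the template.
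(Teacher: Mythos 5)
Your proposal is correct and follows essentially the same route as the paper: fix $\eps$ first so that the ignition speed $\ubar c$ of $\ubar f$ exceeds $c'$ and $\ubar\varphi$ is close to $\varphi$, then choose $L$ large so that $\ubar c_L > c'$ and $\ubar\varphi_L$ is close to $\ubar\varphi$, and finally run the radial‑subsolution comparison from Proposition~\ref{prop:lower-slab-ign-bi} with Lemma~\ref{lem:invasion-mono} in place of Lemma~\ref{lem:invasion-ign-bi}. You correctly identified the genuine novelty (the swap in the order of the $\eps$ and $L$ limits relative to the ignition/bistable proof); one small simplification the paper uses that you partially re-derive is that $\ubar\varphi_L < \varphi$ on all of $[0, L]$ follows immediately from chaining Lemma~\ref{lem:strip-ODE-ign-bi} (applied to $\ubar f$) with Lemma~\ref{lem:eps-steady-comparison}, with no separate analysis of the profile near $y = L$ required.
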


\begin{proof}
  Fix $0 \leq c < c' < c_*$, $\ell > 0$, and $\eta > 0$.
  By ODE stability, $\ubar \varphi \to \varphi$ as $\eps \to 0$.
  Recall that $\ubar{c}$ denotes the speed of the one-dimensional wave for $\ubar{f}$ connecting $1$ to $-\eps$.
  By standard results from reaction-diffusion theory, $\ubar c \to c_*$ as $\eps \to 0$.
  We can thus choose $\eps > 0$ such that $\ubar c > c'$ and
  \begin{equation}
    \label{eq:steady-distance}
    \big\|\varphi - \ubar \varphi\big\|_\infty < \frac{\eta}{3}.
  \end{equation}

  Now recall the monotone traveling wave $\big(\ubar{\Phi}_L, \ubar{c}_L\big)$ satisfying \eqref{eq:eps-strip-TW} for $L > \ubar{A}_{\trm{TW}}$.
  By Lemmas~\ref{lem:TW-unique} and \ref{lem:speed-convergence}, it is unique up to translation and $\ubar{c}_L \to \ubar{c}$ as $L \to \infty$.
  Using the uniform convergence in Lemma~\ref{lem:strip-ODE-ign-bi}, we can thus choose $L > \max\big\{\ubar{A}_{\trm{TW}}, 2 \ell\big\}$ such that $\ubar{c}_L > c'$ and
  \begin{equation}
    \label{eq:wide-distance}
    \big\|\ubar \varphi_L - \ubar \varphi\big\|_{L^\infty([0, \, \ell])} < \frac{\eta}{3}.
  \end{equation}
  Moreover, Lemmas~\ref{lem:strip-ODE-ign-bi} and \ref{lem:eps-steady-comparison} imply
  \begin{equation}
    \label{eq:steady-comparison}
    \ubar{\varphi}_L < \ubar{\varphi} < \varphi.
  \end{equation}

  Using Lemma~\ref{lem:invasion-mono} and \eqref{eq:steady-comparison}, we may now proceed as in the proof of Proposition~\ref{prop:lower-slab-ign-bi}.
  We can construct a radial subsolution from $\ubar{\Phi}_L$ moving outward at speed $c'$, and use it to push $u$ up close to $\ubar{\varphi}_L$.
  In particular, there exists $T$ such that
  \begin{equation}
    \label{eq:sub-push}
    u(t, \tbf{x}', y) \geq \ubar{\varphi}_L(y) - \frac \eta 3
  \end{equation}
  for all $t \geq T$, $\abs{\tbf{x}'} \leq ct$, and $y \in [0, L]$.
  For details, see the proof of Proposition~\ref{prop:lower-slab-ign-bi}.
  
  Combining \eqref{eq:steady-distance}, \eqref{eq:wide-distance}, and \eqref{eq:sub-push}, we find
  \begin{equation*}
    \limsup_{t \to \infty} \sup_{\substack{\abs{\tbf{x}'} \leq c t \\ 0 \leq y \leq \ell}} \big[\varphi(y) - u(t, \tbf{x}', y)\big] < \eta.
  \end{equation*}
  Recalling that $\eta > 0$ was arbitrary, we in fact have
  \begin{equation}
    \label{eq:liminf-slab-mono}
    \limsup_{t \to \infty} \sup_{\substack{\abs{\tbf{x}'} \leq c t \\ 0 \leq y \leq \ell}} \big[\varphi(y) - u(t, \tbf{x}', y)\big] \leq 0.
  \end{equation}
  
  To control $u - \varphi$ from above, we use the uniform convergence of $1$ to $\varphi$ under the evolution \eqref{eq:main}, as in the proof of Proposition~\ref{prop:lower-slab-ign-bi}.
  This implies
  \begin{equation*}
    \limsup_{t \to \infty} \sup_{\hp} [u(t, \anon) - \varphi] \leq 0.
  \end{equation*}
  In combination with \eqref{eq:liminf-slab-mono}, we obtain \eqref{eq:lower-ASP-slab}.
\end{proof}

Now, Theorem~\ref{thm:ASP}\ref{item:ASP-mono} follows from Propositions~\ref{prop:upper}, \ref{prop:lower-slab-mono}, and \ref{prop:ASP-interior}.

\section{Monostable traveling waves}
\label{sec:TW-mono}

Finally, we construct monostable traveling waves.
Throughout, we assume that $f$ is monostable, $\varrho \in [0, \infty)$, and $d = 1$.
With the results of the preceding section, we can immediately prove half of Theorem~\ref{thm:TW}\ref{item:TW-mono}.
\begin{proof}[Proof of nonexistence in Theorem~\textup{\ref{thm:TW}\ref{item:TW-mono}}]
  Suppose for the sake of contradiction that $\Psi$ is a traveling wave of speed $c \in [0, c_*)$, and take $c' \in (c, c_*)$.
  Since $\Psi > 0$, ${\Psi \geq \delta \tbf{1}_B}$ for some $\delta \in (0, 1)$ and some nonempty open ball $B \subset \hp$.
  By Theorem~\ref{thm:ASP}\ref{item:ASP-mono}, the solution $u$ to \eqref{eq:main} beginning from $u_0 = \delta \tbf{1}_B$ eventually approaches $\varphi$ in balls of radius $c't$.
  This necessarily overtakes the traveling wave solution $\Psi(x - ct, y)$, contradicting the comparison principle.
  So $\Psi$ does not exist.
\end{proof}

Now fix $c \geq c_*$.
We follow the approach of \cite{BRR3} to construct a traveling wave $\Phi$ of speed $c$.
By definition, $\Phi$ is a steady solution to \eqref{eq:main} in the $c$-moving frame:
\begin{equation}
  \label{eq:moving-steady}
  \begin{cases}
    \Delta \Phi + c \partial_x \Phi + f(\Phi) = 0 & \textrm{on } \hp,\\
    \partial_y \Phi = \varrho^{-1} \Phi & \textrm{on } \partial \hp.
  \end{cases} 
\end{equation}
Moreover, $\Phi$ satisfies \eqref{eq:connection}.

We first construct a family of subsolutions.
As in the proof of Theorem~\ref{thm:steady}\ref{item:steady-mono}, define
\begin{equation*}
  \rho \coloneqq \inf_{s \in (0, 1/2]} \frac{f(s)}{s} > 0
\end{equation*}
as well as
\begin{equation*}
  \ell_0 \coloneqq \frac{1}{\sqrt{\rho}} \arctan\big(\varrho \sqrt{\rho}\big) \And \ell_1 \coloneqq \frac{\pi}{2\sqrt{\rho}} - \ell_0.
\end{equation*}
Then let
\begin{equation*}
  v(y) \coloneqq
  \begin{cases}
    \sin\left[\sqrt{\rho}(y + \ell_0)\right] & \textrm{for } 0 \leq y \leq \ell_1,\\
    1 & \textrm{for } y \geq \ell_1.
  \end{cases}
\end{equation*}
By construction, $v$ is $\m C^1$ and satisfies the absorbing boundary condition at $y = 0$.
Moreover, $k v$ is a subsolution to \eqref{eq:moving-steady} for all $k \in [0, 1/2]$.
As a consequence,
\begin{equation}
  \label{eq:strict-subsolution}
  \frac{1}{2} v < \varphi \quad \textrm{in } \R_+.
\end{equation}
Indeed, $\frac{1}{2} v$ is a subsolution to \eqref{eq:steady-ODE}.
Since it is not a \emph{solution}, its evolution under the parabolic version of \eqref{eq:steady-ODE} is strictly increasing in time.
Moreover, $\frac{1}{2} v$ lies under the bounded supersolution $1$, so its parabolic evolution converges to a nonzero bounded steady state.
By Lemma~\ref{lem:steady-ODE-mono}, this state is $\varphi$.
Since the evolution of $\frac{1}{2} v$ is strictly increasing in time, \eqref{eq:strict-subsolution} follows.

Next, we need a corresponding supersolution.
\begin{lemma}
  \label{lem:mono-super}
  There exists a supersolution $\Psi$ of \eqref{eq:moving-steady} with the following properties: $\partial_y \Psi = \varrho^{-1} \Psi$ on $\partial \hp$, $\partial_x \Psi < 0$ and $\partial_y \Psi > 0$ in $\hp$, $\Psi(+\infty, \anon) = 0$, and there exists $B \in \R$ such that
  \begin{equation}
    \label{eq:super-bound}
    \Psi > \frac 1 2 v \quad \textrm{on } (-\infty, B] \times [0, \infty).
  \end{equation}
\end{lemma}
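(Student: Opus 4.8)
The plan is to seek the supersolution in the product form $\Psi(x,y) = \zeta(y)\,U(x)$, where $U = U_c$ is the one-dimensional monostable traveling wave of speed $c$ --- which exists since $c \geq c_*$ \cite{AW} --- normalized so that $U(-\infty) = 1$, $U(+\infty) = 0$ and $U' < 0$ on $\R$, and where $\zeta$ is the explicit strictly concave profile
\begin{equation*}
  \zeta(y) \coloneqq 1 - \varepsilon\, \e^{-\alpha y}, \qquad \varepsilon \coloneqq \frac{1}{1 + \alpha\varrho} \in (0, 1],
\end{equation*}
with $\alpha > 0$ a parameter to be taken large below; note that when $\varrho = 0$ this reduces to $\varepsilon = 1$, so $\zeta(0) = 0$. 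Since $\zeta'(0) = \alpha\varepsilon = \varrho^{-1}(1 - \varepsilon) = \varrho^{-1}\zeta(0)$, the function $\Psi$ satisfies $\partial_y\Psi = \varrho^{-1}\Psi$ on $\partial\hp$ with equality (and $\Psi = 0$ there in the Dirichlet case). Moreover $\zeta$ is smooth, strictly increasing and strictly concave, with $0 < \zeta < 1$ on $\R_+$ and $\zeta(+\infty) = 1$; hence $\partial_x\Psi = \zeta U' < 0$ and $\partial_y\Psi = \zeta' U > 0$ in $\hp$, and $\Psi(+\infty, \anon) = 0$. Thus the structural requirements hold automatically, and only the supersolution inequality and the bound \eqref{eq:super-bound} remain to be checked.

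For the supersolution property, the identity $U'' + cU' + f(U) = 0$ gives
\begin{equation*}
  \Delta\Psi + c\,\partial_x\Psi + f(\Psi) = \zeta'' U + \zeta(U'' + cU') + f(\zeta U) = \zeta'' U - \zeta f(U) + f(\zeta U).
\end{equation*}
Since $0 < \zeta U < U < 1$ and $f$ is Lipschitz with $f(0) = 0$ (being piecewise $C^1$ on $[0,1]$), we split $f(\zeta U) - \zeta f(U) = \big[f(\zeta U) - f(U)\big] + (1 - \zeta) f(U)$ and bound each term by $\mathrm{Lip}(f)\,U(1 - \zeta)$, so that $f(\zeta U) - \zeta f(U) \leq 2\,\mathrm{Lip}(f)\,(1 - \zeta)\,U$. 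On the other hand $-\zeta'' = \alpha^2\varepsilon\,\e^{-\alpha y} = \alpha^2(1 - \zeta)$, so as soon as $\alpha^2 \geq 2\,\mathrm{Lip}(f)$ we obtain $f(\zeta U) - \zeta f(U) \leq -\zeta'' U$, whence $\Delta\Psi + c\,\partial_x\Psi + f(\Psi) \leq 0$ throughout $\hp$. The decisive point is that the \emph{strict concavity} of the transverse profile $\zeta$ produces the negative term $\zeta'' U$ which absorbs the reaction cross-term $f(\zeta U) - \zeta f(U)$; an $x$-profile solving a linear constant-coefficient equation (an exponential) would not suffice when $c < 2\sqrt{\sup_{s \in (0,1]} f(s)/s}$, which may occur even though $c \geq c_*$, and this is why $\Psi$ must be built from the genuine wave $U_c$ rather than from an explicit exponential.

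Finally, to arrange \eqref{eq:super-bound} we take $\alpha$ larger still. As $y \searrow 0$ the ratio $v(y)/(2\zeta(y))$ tends to $\sqrt{\rho}/(2\alpha)$ in the Dirichlet case and to $v(0)/2 \leq \tfrac{1}{2}$ in the Robin case; as $y \to \infty$ it tends to $\tfrac{1}{2}$; and on any fixed compact $y$-interval $\zeta \to 1$ uniformly as $\alpha \to \infty$ while $v \leq 1$. A routine estimate therefore gives $\kappa_1 \coloneqq \sup_{y > 0} v(y)/(2\zeta(y)) < 1$ once $\alpha$ is large enough. Choosing $B \in \R$ with $U(B) > \kappa_1$ --- possible since $U(-\infty) = 1 > \kappa_1$ --- we conclude $\Psi(x,y) = \zeta(y) U(x) > \zeta(y)\,\kappa_1 \geq \tfrac{1}{2} v(y)$ for every $x \leq B$ and $y > 0$, which is \eqref{eq:super-bound}. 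In short, the only real obstacle is identifying the correct ansatz: once one pairs the one-dimensional monostable wave in $x$ with a strictly concave transverse profile in $y$ and exploits the concavity term in place of a spectral bound, all the remaining verifications are routine.
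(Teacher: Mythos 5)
Your proof is correct, and it takes a genuinely different route from the paper's. The paper obtains $\Psi$ by taking the one-dimensional wave $U^c$ (extended as a $y$-independent function) and running the parabolic semigroup for time $1$ under the Robin/Dirichlet boundary condition; since $U^c$ is a supersolution, the evolved state is again a supersolution, it inherits monotonicity, and the bound \eqref{eq:super-bound} is extracted from the Hopf lemma and $\m C^1_y$ parabolic regularity applied to the $x \to -\infty$ limit. Your construction is instead fully explicit: the product ansatz $\Psi = \zeta(y) U(x)$ with $\zeta(y) = 1 - \varepsilon \e^{-\alpha y}$, $\varepsilon = (1 + \alpha\varrho)^{-1}$, bakes the Robin/Dirichlet condition in exactly, and the supersolution inequality reduces to the clean pointwise estimate $f(\zeta U) - \zeta f(U) \leq 2\,\mathrm{Lip}(f)(1 - \zeta)U \leq -\zeta'' U$, valid as soon as $\alpha^2 \geq 2\,\mathrm{Lip}(f)$. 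You then get \eqref{eq:super-bound} from the elementary bound $\sup_{y > 0} v(y)/(2\zeta(y)) < 1$ for $\alpha$ large. Both proofs use the genuine 1D wave $U_c$ rather than an exponential profile in $x$, and your observation of why this is necessary (the linear closure speed $2\sqrt{\sup_s f(s)/s}$ may exceed $c$) is accurate. What the paper's approach buys is robustness — it never needs a Lipschitz constant for $f$ or a parameter count — while your approach buys transparency and a self-contained, computation-free existence argument. One small remark applicable to both proofs: in the Dirichlet case $\varrho = 0$ the inequality $\Psi > \frac{1}{2}v$ cannot be strict at $y = 0$, since both sides vanish there; as in the paper, the statement should be read in the sense that the ratio $\Psi/(\tfrac 1 2 v)$ is bounded below by a constant exceeding $1$ on the set where $v > 0$ (which is what the downstream proof actually uses via the constant $k$).
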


\begin{proof}
  Since $f$ is monostable, there exists a one-dimensional wave $U^c$ of speed $c$ connecting $1$ to $0$.
  Let $u$ solve the parabolic form of \eqref{eq:moving-steady} with $u_0 = U^c$, and let
  \begin{equation*}
    \Psi(x, y) \coloneqq u(1, x, y).
  \end{equation*}
  Since $\partial_x u_0 < 0$, we have $\partial_x \Psi < 0$.
  Similarly, $u_0$ is independent of $y$, so the absorbing boundary condition and the strong maximum principle yield $\partial_y \Psi > 0$.
  Now, $u_0$ is a supersolution of \eqref{eq:moving-steady}, so $u$ is decreasing in $t$.
  Since $U^c(+\infty) = 0$, it follows that $\Psi(+\infty, \anon) = 0$.

  We must now grapple with the behavior of $\Psi$ on the far left.
  We recall that ${U^c(-\infty) = 1}$.
  By parabolic regularity, it follows that $u(t, -\infty, y)$ solves the one-dimensional parabolic problem
  \begin{equation}
    \label{eq:half-line-parabolic}
    \begin{cases}
      \partial_t \omega = \partial_y^2 \omega + f(\omega) & \textrm{in } \R_+,\\
      \partial_y \omega(t, 0) = \varrho^{-1} \omega(t, 0),\\
      \omega(0, y) = 1.
    \end{cases}
  \end{equation}
  That is, $\Psi(-\infty, y) = \omega(1, y)$.
  By Dini's theorem, the convergence
  \begin{equation*}
    \Psi(x, y) \to \Psi(-\infty, y)
  \end{equation*}
  is locally uniform in $y$.
  In fact, boundedness and monotonicity in $y$ imply that the convergence is \emph{uniform}.

  Next, note that $\frac{1}{2} v$ is a subsolution of \eqref{eq:half-line-parabolic}.
  Since $\frac{1}{2}v < 1 = \omega(0, \anon)$, we have $\Psi(-\infty, y) \geq \frac{1}{2} v(y)$.
  Suppose $\varrho > 0$.
  Then Hopf and the strong maximum principle imply strict inequality: $\Psi(-\infty, \anon) > \frac{1}{2}v$.
  Since $\frac{1}{2} v(+\infty) = \frac{1}{2}$, the uniform convergence of $\Psi$ on the left implies the existence of $B \in \R$ such that \eqref{eq:super-bound} holds.
  If $\varrho = 0$, we must contend with the behavior of our limit near $y = 0$.
  The Hopf lemma implies that
  \begin{equation*}
    \partial_y \Psi(-\infty, 0) > \frac{1}{2} v'(0).
  \end{equation*}
  Moreover, parabolic regularity implies that the limit $x \to -\infty$ commutes with $\partial_y$.
  Hence, the convergence $\Psi(x, y) \to \Psi(-\infty, y)$ holds in ${\m C}_y^1$.
  It follows that there exists $\eps > 0$ and $B_0 \in \R$ such that
  \begin{equation*}
    \Psi > \frac 1 2 v \quad \textrm{on } (-\infty, B_0] \times [0, \eps].
  \end{equation*}
  With the behavior near the boundary taken care of, we can argue as before to produce $B$ satisfying \eqref{eq:super-bound}.
\end{proof}
\noindent
For each $h \in \R$, let $\Psi^h(x, y) \coloneqq \Psi(x + h, y)$ denote the leftward shift of $\Psi$ by $h$.

Given $a, b > 0$, we define the bounded box
\begin{equation*}
  \Omega_{ab} \coloneqq (-a, a) \times (0, b)
\end{equation*}
and the multiple
\begin{equation*}
  k \coloneqq \min\left\{\inf_{y \in (0, b)} \frac{\Psi^h(a, y)}{v(y)}, \, \frac 1 2\right\}.
\end{equation*}
Then $k \in (0, 1/2]$ by the strong maximum principle and the Hopf lemma.
Hence $kv$ is a subsolution and $kv \leq \Psi^h$ in $\Omega_{ab}$ (since $\Psi$ is decreasing in $x$).
We use this ordered pair of sub- and supersolutions to construct a solution to
\begin{equation}
  \label{eq:TW-box-mono}
  \begin{cases}
    \Delta \Phi_\sq + c \partial_x \Phi_\sq + f(\Phi_\sq) = 0 & \textrm{on } \Omega_{ab},\\
    \partial_y \Phi_\sq = \varrho^{-1} \Phi_\sq & \textrm{on } \partial \Omega_{ab} \cap \partial \hp,\\
    \Phi_\sq(x, y) = \frac{a - x}{2a} \Psi^h(x, y) + \frac{a + x}{2a} k v(y) & \textrm{on } \partial \Omega_{ab} \cap \hp.\\
  \end{cases}
\end{equation}

\begin{lemma}
  There exists a unique solution to \eqref{eq:TW-box-mono} satisfying $k v \leq \Phi_\sq \leq \Psi^h$.
  Furthermore, $\partial_x \Phi_\sq < 0$ and $\partial_y \Phi_\sq > 0$ in $\Omega_{ab}$.
\end{lemma}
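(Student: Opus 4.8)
The plan is to treat $kv$ and $\Psi^h$ as an ordered pair of sub- and supersolutions for \eqref{eq:TW-box-mono}, exactly as in the construction of Lemma~\ref{lem:box-ign-bi}. First I would record that, since $k \le 1/2$, $kv$ is a subsolution of \eqref{eq:moving-steady} (from the definition of $\rho$ and of $\ell_0, \ell_1$) satisfying the absorbing condition on $\{y = 0\}$, while $\Psi^h$ is a supersolution satisfying that same condition (Lemma~\ref{lem:mono-super}); moreover $kv \le \Psi^h$ in $\bar\Omega_{ab}$ by the definition of $k$ together with $\partial_x \Psi^h < 0$. The prescribed boundary value on $\partial\Omega_{ab} \cap \hp$, being the convex combination $\frac{a-x}{2a}\Psi^h(x,y) + \frac{a+x}{2a}kv(y)$, therefore lies between $kv(y)$ and $\Psi^h(x,y)$ there. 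Hence, solving the parabolic form of \eqref{eq:TW-box-mono} with initial datum $\Psi^h|_{\bar\Omega_{ab}}$ yields, by the comparison principle, a solution that is nonincreasing in $t$ and remains trapped in $[kv, \Psi^h]$; its limit as $t \to \infty$ is a solution $\Phi_\sq$ of \eqref{eq:TW-box-mono} with $kv \le \Phi_\sq \le \Psi^h$.

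For uniqueness within the class $kv \le \Phi_\sq \le \Psi^h$ and for the monotonicity properties, I would use the sliding method of \cite{BN}, following the proof of Lemma~\ref{lem:box-ign-bi}. The relevant structural inputs are that \eqref{eq:TW-box-mono} is invariant under translations in both $x$ and $y$, and that the data are monotone: the prescribed boundary value $g(x,y) \coloneqq \frac{a-x}{2a}\Psi^h(x,y) + \frac{a+x}{2a}kv(y)$ is strictly decreasing in $x$ on the top edge $\{y = b\}$ (since $\partial_x \Psi^h < 0$ and $kv \le \Psi^h$) and nondecreasing in $y$ (since $\partial_y \Psi^h > 0$ and $v' \ge 0$), while on the vertical edges it equals $\Psi^h(-a,\cdot)$ and $kv$, which are respectively strictly increasing and nondecreasing in $y$ with $\Psi^h(-a,y) > kv(y)$. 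Sliding a solution $\Phi \in [kv, \Psi^h]$ against its own $x$-translates — comparing the shift with $\Phi$ on $\partial\Omega_{ab}$ by means of $\Phi \le \Psi^h$ on the left edge, $\Phi \ge kv$ on the right edge, the monotonicity of $g$ on the top edge, and the Hopf lemma at the Robin boundary $\{y = 0\}$ — gives $\partial_x \Phi \le 0$; a parallel slide in $y$, using that $kv$, $\Psi^h$, and $g$ are all nondecreasing in $y$, gives $\partial_y \Phi \ge 0$. The strong maximum principle then promotes these to $\partial_x \Phi < 0$ and $\partial_y \Phi > 0$ in $\Omega_{ab}$ for every solution $\Phi$ in the class, since the data are nonconstant in each variable.

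Finally, uniqueness follows from one more sliding comparison: given two solutions $\Phi_1, \Phi_2 \in [kv, \Psi^h]$, each is strictly decreasing in $x$ by the previous step, so translating $\Phi_2$ far to the right places it above $\Phi_1$ on the overlapping rectangle; one then decreases the shift to zero, the strict $x$-monotonicity of $\Psi^h$ on the left edge and of $g$ on the top edge, together with Hopf at $\{y = 0\}$, preventing the shifted solution from ever touching $\Phi_1$ before the shift vanishes. This yields $\Phi_2 \ge \Phi_1$, and by symmetry $\Phi_1 = \Phi_2 = \Phi_\sq$. The main obstacle is the bookkeeping in these sliding arguments — verifying that a sufficiently large shift initiates each comparison and that the critical shift cannot be positive, with the usual care near the corners of $\Omega_{ab}$ and at the Robin boundary — but this is carried out exactly as in \cite{BN} and in the proof of Lemma~\ref{lem:box-ign-bi}.
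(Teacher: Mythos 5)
Your proposal takes the same route as the paper: obtain $\Phi_\sq$ as the monotone-in-time limit of the parabolic evolution between the ordered pair $kv \le \Psi^h$, then apply the sliding method of Lemma~\ref{lem:box-ign-bi} for uniqueness and strict monotonicity. The paper's own proof is just a compressed version of this. One point the paper singles out that your write-up treats only implicitly (``the usual care $\dots$ at the Robin boundary''): because $v$ and $\Psi^h$ were \emph{both} built to satisfy the absorbing condition $\partial_y = \varrho^{-1}(\cdot)$ at $y = 0$, the convex combination $g$ is compatible with the Robin boundary where the two meet, so $\Phi_\sq$ is globally $\mathcal{C}^1$; this regularity is exactly what lets the Hopf lemma and the ``open condition in $\sigma$'' step of the sliding argument go through, and is the reason for the seemingly fussy constructions of $v$ and $\Psi$. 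Worth stating explicitly, as the paper does, since without it the sliding comparison at the corners of $\Omega_{ab}$ is not justified.
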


\begin{proof}
  By construction, $k v \leq \Psi^h$ are sub- and supersolution to \eqref{eq:TW-box-mono}, respectively.
  Thus the parabolic evolution of \eqref{eq:TW-box-mono} from either $k v$ or $\Psi^h$ will be monotone in time, and will converge to a solution $\Phi_\sq$ between $k v$ and $\Psi^h$ as $t \to \infty$.

  We took some care in the construction of $v$ and $\Psi$ so that they satisfy the absorbing boundary condition at $y = 0$.
  This ensures that $\Phi_\sq$ is $\m C^1$.
  With this regularity, uniqueness and monotonicity follow from the sliding arguments in the proof of Lemma~\ref{lem:box-ign-bi}. 
\end{proof}

We use the solutions $\Phi_\sq$ as approximations of traveling waves in the half-plane.
\begin{proof}[Proof of existence in Theorem~\textup{\ref{thm:TW}}.]
  We exploit the dependence of $\Phi_\sq$ on $h$, the translation of $\Psi^h$.
  We therefore write $k^h$ and $\Phi_\sq^h$ for clarity.
  (Note that $\Phi_\sq^h$ is \emph{not} simply a shift of $\Phi_\sq$.)
  Recall $\ell_1$ from the construction of $v.$
  For each $a, b > \ell_1$, Lemma~\ref{lem:mono-super} implies the existence of $\ubar h \in \R$ such that $k^h = \frac 1 2$ for all $h \leq \ubar h$.
  It follows that
  \begin{equation*}
    \Phi_\sq^{\ubar h}(0, \ell_1) > \frac 1 2 v(\ell_1) = \frac 1 2.
  \end{equation*}
  Furthermore, standard elliptic estimates show that $\Phi_\sq^h$ is continuous in $h$.
  Since $\Psi(+\infty, \anon) = 0$, it follows that there exists $h_* > \ubar h$ such that $k^{h_*} < \frac 1 2$ and ${\Phi_\sq^{h_*}(0, \ell_1) = \frac 1 2}$.
  
  For each $a, b > \ell_1$, we have selected a shift $h_*(a, b) \in \R$.
  We now take $a, b \to \infty$.
  By elliptic regularity, $\Phi_\sq^{h_*}$ converges locally uniformly along a subsequence to a solution $\Phi$ to \eqref{eq:moving-steady} satisfying $\partial_x \Phi \leq 0$, $\partial_y \Phi \geq 0$, $0 \leq \Phi \leq 1$, and $\Phi(0, \ell_1) = \frac 1 2$.

  We must now verify the limiting behavior \eqref{eq:connection}.
  Monotonicity in $x$ implies that the limits $\Phi(\pm \infty, \anon)$ exist and satisfy
  \begin{equation*}
    \Phi(-\infty, \ell_1) \geq \Phi(0, \ell_1) = \frac{1}{2} \geq \Phi(+\infty, \ell_1).
  \end{equation*}
  In light of \eqref{eq:strict-subsolution}, this implies that
  \begin{equation}
    \label{eq:limit-inequalities}
    \Phi(-\infty, \ell_1) > 0 \And \Phi(+\infty, \ell_1) < \varphi(\ell_1).
  \end{equation}
  On the other hand, elliptic estimates show that the limits $\Phi(\pm \infty, \anon)$ are bounded solutions of \eqref{eq:steady-ODE}.
  By Lemma~\ref{lem:steady-ODE-mono}, the only such solutions are $0$ and $\varphi$.
  From \eqref{eq:limit-inequalities}, we obtain \eqref{eq:connection}.
  Therefore, $\Phi$ is a traveling wave.
  The strong maximum principle and the Hopf lemma now imply that $0 < \Phi < 1$, $\partial_x \Phi < 0$, and $\partial_y \Phi > 0$ in $\hp$.
\end{proof}

\printbibliography

\end{document}